\definecolor{marin}{rgb}   {0.,   0.3,   0.7}
\definecolor{rouge}{rgb}   {0.8,   0.,   0.}
\definecolor{sepia}{rgb}   {0.8,   0.5,   0.}
\newtheorem{lemma}{Lemma}[section]
\newtheorem{theorem}[lemma]{Theorem}
\newtheorem{proposition}[lemma]{Proposition}
\newtheorem{remark}[lemma]{Remark}
\newtheorem{example}[lemma]{Example}
\newtheorem{notation}[lemma]{Notation}
\newtheorem{definition}[lemma]{Definition}
\newtheorem{conclusion}[lemma]{Conclusion}
\numberwithin{equation}{section}
\newcommand{\QED}{\mbox{}\hfill \raisebox{-0.2pt}{\rule{5.6pt}{6pt}\rule{0pt}{0pt}}
          \medskip\par}
\newcommand{\dd}{\mathrm{d}}
\newcommand{\R}{\mathbb{R}}
\newcommand{\C}{\mathbb{C}}
\newcommand{\T}{\mathbb{T}}
\newcommand{\eps}{\varepsilon}
\newcommand{\ds}{\displaystyle}
\newcommand{\beq}{\begin{equation}}
\newcommand{\eeq}{\end{equation}}
\begin{document}

\title{Nonlinear Geometric Optics method based multi-scale numerical schemes
for highly-oscillatory transport equations}
\author{Nicolas Crouseilles\thanks{Inria, IRMAR, University of Rennes 1, Rennes, France} \and   Shi Jin \thanks{Department of Mathematics, University of Wisconsin-Madison, USA;  Department of Mathematics, Institute of Natural Sciences, MOE-LSEC and SHL-MAC,  Shanghai Jiao Tong University, Shanghai 200240, China
(sjin@wisc.edu). This author's research was also supported by NSF grants 1522184, 1107291 (KI-Net) and NSFC grant 91330203.}  \and Mohammed Lemou\thanks{CNRS, IRMAR, University of Rennes 1, Rennes, France} }
\date{}
\maketitle

\tableofcontents

\newpage 
\begin{abstract}
We introduce a new numerical strategy to solve a class of oscillatory transport PDE models  which is able to capture
accurately the solutions  without numerically resolving the high frequency oscillations {\em in both
space and time}.
Such PDE models arise in semiclassical modeling of quantum dynamics with band-crossings, and other
highly oscillatory waves. Our first main idea is to use the nonlinear geometric optics ansatz, which builds the
oscillatory phase into an independent variable. We then choose suitable initial data, based on the
Chapman-Enskog expansion, for the new model. For a scalar model, we prove that so constructed model will have certain
 smoothness, and consequently, for  a first
 order approximation scheme we prove  uniform error estimates independent of  the (possibly small) wave length. The method is extended to
 systems arising from a semiclassical model for surface hopping, a non-adiabatic quantum dynamic phenomenon. Numerous numerical examples demonstrate that the method has the desired properties.
\end{abstract}

\section{Introduction}

  Many partial differential equations for high frequency waves, in particular,  semiclassical models in quantum dynamics, take the form of systems of transport or Liouville equations with oscillatory
source terms describing interband quantum transitions that are
associated with chemical reactions, quantum tunnelling, Dirac points
in graphene, etc. \cite{morandi1, morandi2, jin-morandi}.
These
terms contain important quantum information, such as Berry connection and
Berry phase, which
are associated with quantum Hall effects \cite{Niu}.  Solving such
systems are computationally daunting since one needs to numerically
resolve the small wave length (denoted by a small parameter $\eps$ in
this paper), which can be prohibitively expansive.

 To efficiently solve a quantum system, or more generally high frequency
waves, a classical method is the geometric optics (GO) or WKB method,
which approximates the amplitude by a transport equation and phase by (nonlinear)
eiconal equation  \cite{Maslov}. This method allows
the computational mesh ($\Delta x$) and time step ($\Delta t$) independent
of $\eps$ \cite{EFO}. However the approximation  is not valid
beyond caustics, since the physically relevant solutions are multi-valued, rather than
the viscosity, solutions to the eiconal equation \cite{SMM, JL, ER1, JO03}.
Even the multi-valued solutions do not
describe accurately caustics, quantum tunnelling and other important
non-adiabatic quantum phenomena. A more accurate method, called the Gaussian
beam or Gaussian wave packet methods, originated independently in
seismology \cite{Hill90, popov}  and chemistry \cite{Hel81} communities (see also
recent developments in the math community \cite{Ral82, Hag, LQB, JWY1, LY10, Lub}), are more
accurate near caustics but need to use $\Delta x, \Delta t=O(\sqrt{\eps})$,
and have difficulties to handle singular potentials \cite{JWY} and non-adiabatic
band-crossing phenomena \cite{JQ}. For recent  overviews of computational high
frequency waves and semiclassical methods for quantum dynamics, see
\cite{ER2, JMS}.

  For problems that contain small or multiple time and space scales, another
framework that has found many success in kinetic and hyperbolic
problems is the {\it asymptotic-preserving} (AP) {\it schemes} \cite{Jin-AP}.
An AP scheme mimics the transition from a microscopic model to the macroscopic
one in the discrete setting, and as a result the scheme can capture
the macroscopic behavior correctly {\it without} resolving numerically the
small, microscopic behavior, thus can be used for all range of $\eps$ with fixed $\Delta x$ and $\Delta t$.
Based on solving one model--the microscopic one, an AP scheme  undergoes the numerical transition from the microscopic to the macroscopic scales {\it automatically}
 without the need to couple two different models at different scales, which is the bottleneck of
 most multiscale or multiphysical methods \cite{EE-HMM}. See recent reviews of AP methods in
\cite{Jin-Review, Degond-Review}. For high frequency wave problems, the AP framework
has found  successes only in dealing with time oscillations,
allowing $\Delta t>\!\!> O(\eps)$ \cite{BJM, Jin-Dirac, Bao1, Bao2, clm, cclm} for a number of
physical problems. Nevertheless, for high frequency waves, the most
difficult challenge is the spatial oscillations which unfortunately demands
$\Delta x=O(\eps)$, an impossible task in high space dimensions.
One earlier work in this direction was in \cite{MR2831055}, 
by using the WKB-basis functions the method
allows $\Delta x=O(1)$, but so far this approach has only been developed for one-dimensional stationary Schr\"odinger equation  (without time oscillations).

In this paper we introduce a general AP approach  to efficiently solve a family of oscillatory waves in which
the phase oscillations depend {\it on both time and space}.
The problem under study takes  the following form
\begin{equation}
\label{eqf}
\begin{array}{l}
\ds \partial_t u + \sum_{k=1}^d A_k(x)\partial_{x_k} u + R(u) = \frac{i}{\varepsilon} E(t,x)  D u + Cu,  \ \ \  \ \ \  t\geq 0; \ \  x\in  \Omega\subset\mathbb{R}^d, \\
\ds
u(0,x)= f_{in}(x, \beta(x)/\varepsilon), \ \ \ \ x\in \Omega\subset \mathbb{R}^d,
\end{array}
\end{equation}
where $u=u(t, x)\in \mathbb{C}^n$, and $A_k$, $D$ and $C$ are given $n\times n$ real matrices.
$R(u): \mathbb{C}^n \mapsto \mathbb{C}^n$, is the source
term independent of  $\eps$, the small dimensionless wave length. The quantity $E$ 
is  a real valued scalar function. 
The initial data $f_{in}$
may have an oscillatory dependence with an initial  phase $\beta(x)/\varepsilon$, and 
in this case we will assume that the dependence of $f_{in}$ on this phase is periodic.
Many semiclassal  models for quantum dynamics may be written in this general form (see an example surface hopping \cite{jin-morandi},
graphene \cite{morandi2}, and quantum dynamics in periodic lattice \cite{morandi1}),  in which $E\geq 0$ is the gap between different energy bands. 
Some high frequency wave equations also have the form of (\ref{eqf}) \cite{ER2}.  In this paper, we assume periodic boundary condition in space such that $\Omega=[0, 1]^d$, although the method can be extended to
more general boundary conditions.

Our main idea
is to use the {\it nonlinear geometric optics} (NGO), which has been
widely studied at the theoretical level in the mathematical community last century for
nonlinear hyperbolic conservation laws \cite{DM, Joly, Majda, RK}. The NGO approach builds
the oscillatory phase as an independent variable.
Specifically, one introduces
a function  $U : (t,x,\tau) \in (0, T)\times \mathbb{R}^d\times (0, 2\pi) \rightarrow U(t, x,\tau) \in \mathbb{C}^n$,
which is $2\pi$-periodic with respect to the last variable $\tau \in (0, 2\pi)$,
and  coincides with the solution $u$  of \eqref{eqf} in the sense
\begin{equation}
\label{Ftof}
U(t, x, S(t, x)/\varepsilon) = u(t, x).
\end{equation}
We then transfer the original equation  into a {\it linear} equation
for the phase $S(t,x)$--unlike in GO where the phase equation is the nonlinear eikonal equation which
triggers caustics--coupled with an equation on the profile
$U$. The main interest of our reformulation is twofold. First the equation
for $S$  is linear and does not depend on $\eps$,  which makes its numerical approximation simple, accurate and
inexpensive. Second, thanks to the additional degree of freedom in $U$,
one can, and {\it needs to}, choose suitable initial data such that  $U(t,x,\tau)$ is
uniformly bounded in $\eps$ up to certain order of derivatives in time and space,
which can then be solved numerically efficiently: such initial
data can be generated
by  utilizing the
classical Chapman-Enskog expansion \cite{Cer} as was done
in \cite{cclm, clm} to efficiently compute the time oscillations.
As a result, our method is AP, {\it in both space and time},  which allows correct solutions even when
$\Delta x, \Delta t >\!\!> O(\eps)$.

 The paper is organized as follows. In section 2, we present in details the strategy for highly oscillatory scalar equations in one dimension. In particular we reformulate the problem into a new one with an additional dependence  on a
 well-chosen oscillation phase. We prove that this augmented problem is smooth enough in both space and time with respect to the oscillation parameter. Based on this reformulation, we construct
a numerical scheme for which we prove  that the order of accuracy is also uniform in $\eps$. Numerical results are performed to assert the efficiency of our method.
Then, in section 3, we extend the strategy to a class of oscillatory hyperbolic systems with an application to a  semiclassical surface hopping model. A conclusion is finally given in section 4.

We remark that our approach, although presented here only in one space dimension, can be generalized to higher
space dimension straightforwardly. This will be the subject of a future work.

\section{One dimensional scalar equations}
As an illustrative example, we first consider the following model satisfied by $u(t, x)\in \C, \ x\in [0, 1],\  t\geq 0$,
\begin{equation}
\label{eq_scalar}
\partial_t  u + c(x)\partial_x u + r(u) = \frac{ia(x)}{\varepsilon} u,\quad  \;\; u(0, x)=u_0(x),
\end{equation}
where the functions $u_0$, $a$, $c$ and $r$ are given. Periodic boundary conditions are also considered in space.
In some cases, we will allow the initial data to be oscillatory
$$u_0(x)=f_{in}(x, \beta(x)/\varepsilon) \equiv f_{in}(x,\tau) \qquad {\text {with}} \quad \tau=\frac{\beta(x)}{\varepsilon},$$
where $\beta$ is a given function and $f_{in}$ is supposed to be
periodic with respect to the second variable $\tau$.
More precise technical  assumptions on all these functions will be made later on.

\subsection{The linear case}
First, we focus on the linear case $r(u)=\lambda u$ where $\lambda$ is a constant .
In this case, we expand the initial data with respect to the periodic variable $\tau$:
$$
u_0(x)= \sum_{k\in \mathbb{Z}} f_k(x) e^{ik\beta(x)/\varepsilon},
$$
which allows to restrict the study of \eqref{eq_scalar} to the following equations
\begin{equation}
\label{eq_scalar_b}
\partial_t  u_k + c(x)\partial_x u_k + \lambda u_k = \frac{ia(x)}{\varepsilon} u_k, \;\; u_k(t=0, x)=f_k(x)e^{ik\beta(x)/\varepsilon}.
\end{equation}
Indeed the linearity of the equation allows the use of the superposition principle,  and the solution of \eqref{eq_scalar} can be recovered by $u(t, x)=\sum_k u_k(t, x)$.

Since \eqref{eq_scalar_b} is linear, one can apply the standard {\em Geometric Optics} (GO) by injecting the ansatz
$u_k(t, x)=\alpha_k(t, x)e^{ikS(t, x)/\varepsilon}$ into \eqref{eq_scalar_b}.
This gives
$$
\partial_t \alpha_k + c(x)\partial_x \alpha_k +\lambda \alpha_k+\frac{i}{\varepsilon}[\partial_t S+c(x)\partial_x S]\alpha_k
= \frac{ia(x)}{\varepsilon}\alpha_k.
$$
To remove the terms in $1/\eps$, one can impose the following equations on $\alpha$ and $S$
\begin{eqnarray*}
\partial_t \alpha_k + c(x)\partial_x \alpha_k+\lambda \alpha _k= 0, & \alpha(0, x)=f_k(x), \nonumber\\
\partial_t S + c(x)\partial_x S= a(x), & S(0, x)=\beta(x).
\end{eqnarray*}
This gives rise to {\it non oscillatory} solutions $S$ and $\alpha_k$ which can
be solved numerically quite efficiently without numerically resolving the
small time and wavelength scales  of size $O(\varepsilon)$.

\subsection{The nonlinear case}
When $r$ is nonlinear, the superposition principle cannot be applied anymore and the GO approach does not work.
Then, we utilize what was called in the literature the {\it nonlinear
geometric optics} (NGO) ansatz, namely, introduce a function $U(t, x, \tau)$ which depends on an additional {\it periodic} variable $\tau$,
and satisfies
\begin{equation}
\label{udiag}
U(t, x, S(t, x)/\varepsilon)=u(t, x),
\end{equation} with $u$ solution to \eqref{eq_scalar}.
The equation satisfied by $U$ writes
$$
\partial_t U+c(x)\partial_x U +\frac{1}{\varepsilon}[\partial_t S+c(x)\partial_x S]\partial_\tau U + r(U) = \frac{ia(x)}{\varepsilon}U.
$$
To get a constant period in the independent variable $\tau$,
 we should impose the following equation on $S$
\begin{equation}
\label{eq_S}
\partial_t S+c(x)\partial_x S = a(x), \;\; S(0, x)=\beta(x).
\end{equation}
Then, we deduce the equation for $U$ to
\begin{equation}
\label{eq_U}
\partial_t U+c(x)\partial_x U + r(U) = -\frac{a(x)}{\varepsilon}(\partial_\tau U-iU), \;\;\;\;\; U(0, x, \beta(x) /\varepsilon)=u_0(x).
\end{equation}

It is clear that solving (\ref{eq_S}) and (\ref{eq_U})  with any initial data  $U(0, x, \tau)$ satisfying
$U(0, x, \beta(x) /\varepsilon)=u_0(x)$ allows one to recover the desired original solution to
(\ref{eq_scalar}) through  relation  (\ref{udiag}). Due to the extra dimension introduced by $U$,  there are infinitely many possible choices of such initial data.  We will choose one--which is essential--that provides  a "smooth enough" solution with respect
to $\varepsilon$. Indeed, from  numerical point of view, this smoothness property is of paramount importance when one wants to get a numerical scheme with a {\em uniform accuracy} with respect to $\eps$.
We will consider two cases for which this choice is possible and 
a uniform smoothness with respect to $\eps$ of the phase $S$ and the profile $U$ can be obtained at any order.
The first case  is very simple  since the models on $S$ and $U$ do not depend on $\eps$, while the second case requires more care and is presented in the next subsection.

{\bf Case 1}: $a\equiv 0$ with possibly oscillatory initial data:  $u_0(x)= f_{in}(x,\beta(x)/\eps)$. The equation
on the phase $S$  is given by (\ref{eq_S}) and the equation on $U$ reduces to
$$ \partial_t U+c(x)\partial_x U + r(U) = 0, \;\;\;\;\; U(0, x, \tau)=f_{in}(x,\tau).$$
The two equations on $S$ and $U$ clearly do not depend on $\eps$ and therefore numerical schemes on $S$ and $U$ will not be restricted by the small values of $\eps$.

\medskip

{\bf Case 2}: $a\ne 0$ with non-oscillatory initial data $u_0(x)=\alpha(x)$. In this case the equation for the phase $S$ is still given by (\ref{eq_S}) with $\beta(x)=0$, and the equation on
$U$ is also  given by \eqref{eq_U}, which can be written in terms of
\[
V= e^{-i\tau} U
\]
where $V$ solves
\begin{equation}
\label{eq_V}
\partial_t V+c(x)\partial_x V + e^{-i\tau}r(e^{i\tau}V) = -\frac{a(x)}{\varepsilon}\partial_\tau V.
\end{equation}
Since the only condition one has to impose on $V$ is  $V (0, x, 0)=u_0(x)$ (recall that boundary conditions are imposed in $x$),
this gives some  freedom for the choice of the
initial data for $V$, and the strategy of this choice will be developed in the next subsection.

\begin{remark}
Our approach works with either oscillatory initial data, or oscillatory source. It does not apply to problems where oscillations are generated from both initially data and sources. See discussions in section \ref{mixed}.
\end{remark}

\subsection{A suitable initial condition}
\label{subsection_IC}
Considering the non-oscillatory initial data (Case 2),
one needs initial data $V(0,x,\tau)$  {\em for all $\tau$} to solve equation (\ref{eq_V}) .
Since the only condition we have  to ensure  is
$ V (0, x,0)=u_0(x)= \alpha(x)$, there is a degree of freedom in choosing the expression of $V(0,x,\tau)$.
{\it The central idea here is to choose it
in such a way that the solution $V$ is non-oscillatory in $\varepsilon$ (up to
certain order of time-space derivatives).}
To show this construction, we will deal in this section with the case of non-oscillatory initial data, in which case
we can construct $V(0, x, \tau)$ in such a way that the time-space derivatives of $V$ (up to second order) are
uniformly bounded with respect to $\varepsilon$.

Since the initial condition in \eqref{eq_scalar} takes the non-oscillatory form $u(0, x)= \alpha(x)$,
we have $S(0,x)=0$.
Following \cite{cclm, clm}, we will construct ''well-prepared initial data'' $V(0, x,\tau)$
which ensures that the high-order time and space derivatives of $V$ are also bounded
uniformly in $\eps$, together with  \begin{equation}
V(0, x, \tau=0 )=\alpha(x).
\label{V_init1}
\end{equation}
As a consequence, we will see that  the so-obtained  initial data for $V$ provides a  non-oscillatory  solution
$V$ and  allows the construction of  numerical schemes with a uniform accuracy with respect to $\varepsilon$.
Below, we will describe  the method,  and we refer to \cite{cclm, clm} for more details.
Note that this type of initial data is obtained by formally expanding the solution in terms of $\eps$ in the spirit of
the well-known Chapman-Enskog expansion in kinetic theory \cite{Cer}.
For this purpose, we first introduce  the notations
$$
{\cal L}g = \partial_\tau g, \quad   \Pi  g  = \frac{1}{2\pi} \int_0^{2\pi} g(\tau) d\tau,
$$
and let
$$
V^0= \Pi V, \quad V^1= ({\cal I}- \Pi) V. 
$$
The operator ${\cal L}$ is skew-symmetric  on $L^2(d\tau)$, its kernel is the space of functions
which do not depend on $\tau$, and  $\Pi$ is the $L^2(d\tau)$-orthogonal
projector onto the kernel of ${\cal L}$. The operator ${\cal L}$ is invertible on the set of functions having zero average in the variable $\tau$, and
$$
{\cal L}^{-1} g =  ({\cal I}-\Pi) \int_0^\tau g(\sigma) d\sigma = \int_0^\tau g(\sigma) d\sigma +\frac{1}{2\pi} \int_0^{2\pi} \sigma g(\sigma) d\sigma,
$$
for all $g\in L^2(d\tau)$ such that $\Pi g=0$. In particular

$$
{\cal L}^{-1} (e^{i\tau})= -ie^{i\tau} , \qquad  {\cal L}^{-1} (e^{-i\tau})= i e^{-i\tau}.
$$
In addition,  ${\cal L}^{-1}$ is a bounded operator since $\|{\cal L}^{-1} g \|_{L^\infty_\tau} \leq C \| g \|_{L^\infty_\tau}$
and $\|{\cal L}^{-1} g \|_{L^2_\tau} \leq C \| g \|_{L^2_\tau}$ for all $g\in L^2(d\tau)$ such that $\Pi g=0$.

We now apply $\Pi$ and ${\cal I}-\Pi$ to (\ref{eq_V}) to  get
\begin{equation}
\label{eq_V01}
\partial_t V^0+c(x)\partial_x V^0 + {\cal  \Pi} [e^{-i\tau}r(e^{i\tau}(V^0+V^1))] = 0,
\end{equation}
\begin{equation}
\label{eq_V11}
\partial_t V^1+c(x)\partial_x V^1 + ({\cal I}-\Pi)[e^{-i\tau}r(e^{i\tau}(V^0+V^1))] = -\frac{a(x)}{\varepsilon}\partial_\tau V^1.
\end{equation}
In particular (\ref{eq_V11}) gives
\begin{equation}
\label{res_V11}
V^1(t, x, \tau)= -\eps (a(x))^{-1}  {\cal L}^{-1} ({\cal I}-\Pi)[e^{-i\tau}r(e^{i\tau}V^0)] + O(\eps^2),
\end{equation}
which, when applied to (\ref{eq_V01}) and letting $\eps \to 0$, formally yields
\begin{equation}
\label{eq_V0_limit}
\partial_t V^0+c(x)\partial_x V^0 + \Pi [e^{-i\tau}r(e^{i\tau}(V^0))] = 0.
\end{equation}
Then, one gets the following expansion for $V$
\begin{eqnarray}
\hspace{-1cm}V(t,x,\tau)&\!\!\!=\!\!\!&V^0(t, x)+V^1(t, x, \tau)\nonumber\\
\label{ce_Vt}
&\!\!\!\!=\!\!\!& V^0(t, x)- \eps (a(x))^{-1} {\cal L}^{-1} ({\cal I}-\Pi)[e^{-i\tau}r(e^{i\tau}V^0(t, x))] + O(\eps^2).
\end{eqnarray}
To avoid oscillations in $\eps$, this expansion should be satisfied at $t=0$ as well.
Evaluating \eqref{ce_Vt} at $t=\tau=0$ and using (\ref{V_init1}), this means that
$$
\alpha(x) = V^0(t=0, x)- \eps   (a(x))^{-1}  {\cal L}^{-1} ({\cal I}-\Pi)[e^{-i\tau}r(e^{i\tau}V^0(t=0, x))] {\Big|_{\tau=0}}  + O(\eps^2),
$$
or
\begin{equation}
\label{ce_V0}
V^0(t=0, x)= \alpha(x)+ \eps  (a(x))^{-1}  {\cal L}^{-1} ({\cal I}-\Pi)[e^{-i\tau}r(e^{i\tau}\alpha(x))]{\Big|_{\tau=0}} + O(\eps^2).
\end{equation}
Evaluating \eqref{ce_Vt} at $t=0$ and using  \eqref{ce_V0}
finally yields our suitable initial data:
\begin{eqnarray}
 V(0,x,\tau)&=& \alpha(x)+ \frac{\eps}{a(x)} \Big[ G(0, \alpha) -G(\tau, \alpha)\Big],  \nonumber\\
 \label{init_prepared}
 \mbox{ with }\!\!\!\!\!\!\!  && \!\!\!\!\!\!\! G(\tau, \alpha) =  {\cal L}^{-1} ({\cal I}-\Pi)[e^{-i\tau}r(e^{i\tau}\alpha(x))].
\end{eqnarray}
We will see that this approach not only allows one to capture the main oscillations with phase $S(t,x)$ and
amplitude $O(1)$, it also allows to capture oscillations of amplitude $\eps$.

\begin{remark}
The Chapman-Enskog expansion is conducted only to generate the suitable initial data for $V$, while the equations for  $S$ and $V$ (or $U$) are not asymptotically truncated. This guarantees that our method,
unless other asymptotic methods, {\it is accurate for all $\eps$}.
\end{remark}

\begin{remark}
\label{remark_asymptotic_model}
Another interest of the augmented formulation above is the following. Because of oscillations, in general the solution to (\ref{eq_scalar}) cannot converge strongly but only weakly when $\eps\to0$. However, if $S$ is the solution to (\ref{eq_S}), then one may have (in some appropriate functional space)
$$  \exp(-iS(t,x)/\eps) u(t,x) = V(t,x,S(t,x)/\eps)\ \  \mbox{converges strongly to } \ \   \bar u =  \bar u(t,x)$$
where $\bar u$ satisfies
\beq
\label{asymptotic_model}
\partial_t \bar u+c(x)\partial_x \bar u + \Pi e^{-i\tau}r(e^{i\tau}\bar u) = 0, \ \qquad   \bar u(0,x)= u_0(x).
\eeq
We will demonstrate this numerically in section \ref{num_test}.  It will be interesting to investigate rigorously this strong convergence, but this task is beyond the scope of
this paper and is deferred to a future work.
\end{remark}

Now we will give a theorem which states that, up to the second order, time and space derivatives of $V$ are bounded uniformly in $\eps$, provided that the initial condition is given by (\ref{init_prepared}).
First, we make the following assumptions on  $r$ and $a$.
\paragraph{Assumption on $r$.}
We assume in the sequel that $r:\R\rightarrow \R$ is a twice differentiable function on $\R$
whose two first derivatives are bounded.

\paragraph{Assumption on $a$.} $a:[0, 1]\rightarrow \R$ is a ${\cal C}^2$
function satisfying   $a(x)\geq a_0>0, \forall x\in [0, 1]$.
\bigskip

Note that the assumption on $a$ excludes the case of band crossing. There is  no such a restriction for the
actual numerical scheme, as will be demonstrated numerically later.
\bigskip

 For the sake of simplicity, we will restrict ourselves in the following  theorem to the case of a constant transport coefficient $c(x)=c >0$. The extension to a non-constant $c(x)$ can easily be derived following the lines of the proof.

\bigskip
\bigskip

\begin{theorem}
\label{theorem1}
Let $V$ be the solution of \eqref{eq_V} on $[0, T]$. We consider the initial data \eqref{init_prepared} and
periodic boundary condition in $x$ and $\tau$ variables.
Then, the time and spatial derivatives of $V$
are bounded uniformly in $\eps\in ]0,1]$, that is,  $\exists C>0$ independent of $\eps$ such that $\forall t\in [0, T]$
$$
 \|\partial_t^p V(t)\|_{L^\infty_{\tau, x}} \leq C, \;\; \mbox{ and } \;\; \|\partial_x^p V(t)\|_{L^\infty_{\tau, x}} \leq C, \mbox{ for } p=0, 1, 2,
 $$
 and
 $$
 \|\partial^2_{xt} V(t)\|_{L^\infty_{\tau, x}} \leq C, \;\; \mbox{ and } \;\; \|\partial_\tau V(t)\|_{L^\infty_{\tau, x}} \leq C.
$$
\end{theorem}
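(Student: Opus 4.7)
The strategy is a Chapman--Enskog micro-macro decomposition $V = V^0 + \varepsilon W$ with $V^0 = \Pi V$ and $\varepsilon W = (\mathcal{I}-\Pi) V$, followed by propagation of uniform-in-$\varepsilon$ bounds on $V^0$, $W$, and their derivatives up to the orders needed. All the bounds claimed in Theorem~\ref{theorem1} then follow, since $\partial_\tau V = \varepsilon\,\partial_\tau W$ gives the bound on $\partial_\tau V$ directly, while time and space derivatives of $V$ split as the corresponding derivatives of $V^0$ plus $\varepsilon$ times those of $W$.

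Inserting this decomposition into \eqref{eq_V} and projecting by $\Pi$ and $\mathcal{I}-\Pi$ produces the coupled system
\begin{align*}
\partial_t V^0 + c \partial_x V^0 &= -\Pi\bigl[e^{-i\tau} r(e^{i\tau}(V^0+\varepsilon W))\bigr],\\
\varepsilon(\partial_t W + c\partial_x W) + a(x)\,\partial_\tau W &= -(\mathcal{I}-\Pi)\bigl[e^{-i\tau} r(e^{i\tau}(V^0+\varepsilon W))\bigr].
\end{align*}
A direct computation using $\Pi G(\cdot,\alpha)=0$ shows that \eqref{init_prepared} is equivalent to $V^0(0,x) = \alpha(x) + (\varepsilon/a(x))\,G(0,\alpha)$ and $W(0,x,\tau) = -G(\tau,\alpha)/a(x)$, so $W|_{t=0}$ is bounded uniformly in $\varepsilon$ together with its $(x,\tau)$-derivatives up to order two, by the $\mathcal{C}^2$-regularity of $a$, the bound $a\geq a_0>0$, the $C^2$-regularity of $r$, and the boundedness of $\mathcal{L}^{-1}$ on zero-mean functions. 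Moreover, since $\partial_\tau G(\tau,\alpha) = (\mathcal{I}-\Pi)[e^{-i\tau}r(e^{i\tau}\alpha)]$, the a priori singular expression $\frac{1}{\varepsilon}\bigl[a(x)\partial_\tau W + (\mathcal{I}-\Pi)[e^{-i\tau}r(e^{i\tau}(V^0+\varepsilon W))]\bigr]$ evaluated at $t=0$ turns out to be $O(1)$, which guarantees that $\partial_t W|_{t=0}$ is also bounded uniformly in $\varepsilon$ --- this is exactly the content of the well-preparation.

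These initial bounds are then propagated by the method of characteristics for the transport operator $T_\varepsilon = \partial_t + c\partial_x + (a(x)/\varepsilon)\partial_\tau$, combined with Gronwall's lemma. An $L^\infty$ bound on $V$ itself is immediate from the Lipschitz character of $r$. The key cancellation is that every time a factor $1/\varepsilon$ is produced by differentiating $a(x)/\varepsilon$ in $x$, it ends up acting on an object of the form $\partial_\tau V = \varepsilon\,\partial_\tau W$ (or a higher $(t,x)$-derivative thereof), hence yields a bounded quantity. Iterating this, the differentiated system for $(V^0, W, \partial V^0, \partial W, \partial^2 V^0, \partial^2 W)$ has the same transport structure as the original one, but with bounded coefficients and bounded sources, so that a Gronwall bootstrap in $L^\infty_{x,\tau}$ closes the estimate on $[0,T]$.

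The main obstacle will be to verify, at the second-derivative level, that the terms produced by $\partial_x^2$, $\partial_t^2$, $\partial_x\partial_t$ hitting $(a(x)/\varepsilon)\partial_\tau V$ --- namely $(a''(x)/\varepsilon)\partial_\tau V$, $(a'(x)/\varepsilon)\partial_x\partial_\tau V$, $(a'(x)/\varepsilon)\partial_t\partial_\tau V$ --- reduce to genuinely bounded quantities once the substitution $V^1 = \varepsilon W$ is carried out and the bounds on $\partial_\tau W$, $\partial_x\partial_\tau W$, $\partial_t\partial_\tau W$ just established are invoked. This is where the hypotheses $a\in\mathcal{C}^2$ with $a\geq a_0>0$ and $r\in C^2$ with bounded $r',r''$ are used in an essential way, and where the skew-symmetry of $\partial_\tau$ in $L^2(d\tau)$ ensures that the singular prefactor $a(x)/\varepsilon$ contributes no growth to the coupled energy estimate closing the argument.
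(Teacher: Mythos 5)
Your proposal takes a genuinely different route from the paper's own proof, and the difference is worth spelling out. The paper's key technical device is the change of time variable $s=S(t,x)$ (with $S$ the increasing-in-$t$ phase solving $\partial_t S+c\partial_x S=a$), which transforms the equation so that the coefficient of $\partial_\tau$ becomes the \emph{constant} $1/\eps$ rather than $a(x)/\eps$. After that change, differentiating in $x$ never produces $a'(x)/\eps$ or $a''(x)/\eps$ terms acting on $\tau$-derivatives, and the argument reduces, via the explicit characteristics and the elementary ODE Lemma~\ref{lemma}, to a sequence of decoupled Gronwall estimates. By contrast, you work directly with the $V$-equation, and the singular spatial derivatives of $a(x)/\eps$ do appear; you neutralize them by invoking $\partial_\tau V=\eps\,\partial_\tau W$ (micro part is $O(\eps)$). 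This is a legitimate idea, but it is subtler than your sketch suggests: the assertion that each resulting term ``yields a bounded quantity'' is not an a priori bound. For instance, at second order the term $(a'/\eps)\partial_\tau\partial_x V$ appears, and re-expressing $\tfrac{1}{\eps}\partial_\tau\partial_x V$ through the equation brings in $\partial^2_{tx}V$ and $\partial^2_x V$ --- precisely the quantities you are still trying to bound --- so the estimate only closes through a \emph{coupled} Gronwall argument for all second derivatives simultaneously, not a sequential one. The paper's time-change decouples these estimates and is therefore cleaner for this scalar model, while your approach sidesteps the need for an explicit phase change of variable (so it may generalize more readily) at the cost of heavier bookkeeping. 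Also note that your appeal to skew-symmetry of $\partial_\tau$ in $L^2(d\tau)$ is tangential here: the theorem is stated in $L^\infty_{\tau,x}$, where the paper uses characteristics, not an energy method, to handle the stiff transport.

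Two smaller remarks. First, your reading of \eqref{init_prepared} as $V^0(0,x)=\alpha(x)+(\eps/a(x))G(0,\alpha)$ and $W(0,x,\tau)=-G(\tau,\alpha)/a(x)$ is correct, since $\Pi G(\cdot,\alpha)=0$. Second, the assertion that $\partial_t W|_{t=0}$ is $O(1)$ because the singular combination vanishes to first order in $\eps$ at $t=0$ does capture the content of well-preparedness, but it should be checked explicitly --- this is exactly where the paper's proof spends most of its effort, evaluating $\tfrac{1}{\eps}\partial_\tau W(0)$ and $\partial_x W(0)$, $\partial_x^2 W(0)$ term by term. Your proposal, as written, leaves this verification implicit.
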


\begin{proof}[Proof of Theorem \ref{theorem1}]
First, we make the following change of variable
\beq
\label{V_to_W}
V(t, x, \tau)=W(S(t, x), x, \tau)
\eeq
where $ S$ satisfies
\[
 \partial_t S + c\partial_x S = a(x),\qquad S(0, x)=0.
\]
In the one-dimensional case, one can write the exact solution for $S$
\beq
\label{sol_exact_S}
S(t, x) = \frac{1}{c}\Big[ A(x)-A(x-ct) \Big], \;\; \mbox{ with } A(x)=\int_0^x a(y) dy.
\eeq
Observe that, for all $x\in \R $, the phase  $S(t,x)$ is an increasing function in $t$, and this property remains true in higher space dimensions. This means that  the map $t\mapsto s=S(t,x)$ can be seen
as a change of variable in time. Note that $s \in [0, \bar T]$, with
$\bar T\leq (2/c)\|A\|_{L^\infty_x} \leq (2/c)\|a\|_{L^\infty_x}$.
Then, $W(s, x, \tau)$ satisfies
\begin{eqnarray}
&&\partial_s W+\frac{c}{a(x)}\partial_x W + \frac{1}{a(x)} e^{-i\tau} r(e^{i\tau}W) = -\frac{1}{\eps}\partial_\tau W, \nonumber\\
&&W(0, x, \tau)= V(0,x,\tau)= \alpha(x)+ \frac{\eps}{a(x)}  \Big[ G(0, \alpha) -G(\tau, \alpha)\Big], \nonumber\\
 &&\mbox{with } G(\tau, \alpha) = {\cal L}^{-1} ({\cal I}-\Pi)[e^{-i\tau}r(e^{i\tau}\alpha(x))].
\label{eq_W}
\end{eqnarray}
We then need to prove the following result for $W$.
\begin{proposition}
\label{theoremW}
Let $W$ be the solution of \eqref{eq_W} on $[0, \bar T]$, $\bar T>0$, with periodic boundary condition in $x$ and $\tau$.
Then, up to the second order, the time and spatial derivatives of $W$
are bounded uniformly in $\eps\in ]0,1]$, that is,  $\exists C>0$ independent of $\eps$ such that, $\forall s\in [0, \bar T]$
$$
 \|\partial_s^p W(s)\|_{L^\infty_{\tau, x}} \leq C, \;\; \mbox{ and } \;\; \|\partial_x^p W(s)\|_{L^\infty_{\tau, x}} \leq C, \mbox{ for } p=0, 1, 2,
 $$
 and
 $$
 \|\partial^2_{st} W(s)\|_{L^\infty_{\tau, x}} \leq C.
$$
\end{proposition}

We first claim that this proposition implies the result of  Theorem \ref{theorem1}.
Indeed, using the following straightforward relations
\begin{eqnarray*}
\partial_t V &=& \partial_s W \partial_t S, \\
\partial^2_t V &=& \partial^2_s W (\partial_t S)^2 + \partial_s W\partial_t^2 S,\\
\partial_xV &=& \partial_s W \partial_x S +\partial_x W, \\
\partial^2_x V &=& \partial^2_s W (\partial_x S)^2 + \partial_s W\partial_x^2 S + \partial^2_{xs}W (\partial_x S) + \partial_x^2W,\\
\partial^2_{tx} V &=& \partial_s^2 W(\partial_t S) (\partial_x S) + \partial_s W \partial^2_{tx} S+\partial^2_{sx} W\partial_t S,
\end{eqnarray*}
and the fact that $S(t, x)$ given by \eqref{sol_exact_S} has its time and space derivatives uniformly
bounded (since $S$ does not depend on $\eps$), the estimates stated in Theorem \ref{theorem1} hold.
\end{proof}

Now, we focus on the proof of Proposition \ref{theoremW}.
To this aim, we start with the following elementary lemma:
\begin{lemma}
\label{lemma}
Consider the following ordinary differential equation
$$
\frac{d y^\varepsilon}{dt} = \alpha^\varepsilon(t) y^\varepsilon + \beta^\varepsilon(t), \;\; t\in [0, T], \;\; y^\eps(0)=y^\eps_0,
$$
with $y^\varepsilon, \alpha^\varepsilon, \beta^\varepsilon:[0, T] \rightarrow {\cal H}$, ${\cal H}$
being a Banach algebra space. Assume  that  there exists a constant $C>0$
independent of $\varepsilon$ such that $\|\alpha^\varepsilon(t)\|_{\cal H}\leq C$ and $\|\beta^\varepsilon(t)\|_{\cal H}\leq C$, $\forall t\in [0, T]$.
Then there exists a constant $M>0$ independent of $\eps$ such that  $\|y^\varepsilon(t)\|_{\cal H}\leq M  \| y^\varepsilon_0\|_{\cal H}$, $\forall t\leq T$.
\end{lemma}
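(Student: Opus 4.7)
The plan is to reduce the lemma to a standard Grönwall-type estimate, which is why the hypothesis that $\mathcal{H}$ is a Banach algebra is crucial (so that $\|\alpha^\varepsilon(t) y^\varepsilon(t)\|_{\mathcal{H}} \leq \|\alpha^\varepsilon(t)\|_{\mathcal{H}} \|y^\varepsilon(t)\|_{\mathcal{H}}$). First I would rewrite the ODE in integral form via Duhamel's principle:
$$y^\varepsilon(t) = y^\varepsilon_0 + \int_0^t \bigl(\alpha^\varepsilon(s) y^\varepsilon(s) + \beta^\varepsilon(s)\bigr)\, ds.$$
Taking the $\mathcal{H}$-norm, applying the triangle inequality, the Banach algebra submultiplicativity, and the uniform bounds $\|\alpha^\varepsilon\|_{\mathcal{H}}, \|\beta^\varepsilon\|_{\mathcal{H}} \leq C$, one obtains
$$\|y^\varepsilon(t)\|_{\mathcal{H}} \leq \|y^\varepsilon_0\|_{\mathcal{H}} + CT + C \int_0^t \|y^\varepsilon(s)\|_{\mathcal{H}}\, ds.$$

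Next I would apply the scalar Grönwall inequality to the real-valued function $\varphi(t) := \|y^\varepsilon(t)\|_{\mathcal{H}}$, producing the bound
$$\|y^\varepsilon(t)\|_{\mathcal{H}} \leq \bigl(\|y^\varepsilon_0\|_{\mathcal{H}} + CT\bigr) e^{CT}\qquad \forall t \in [0,T].$$
Since $C$ and $T$ are independent of $\varepsilon$, this gives the desired uniform-in-$\varepsilon$ bound with a constant $M$ depending only on $C$ and $T$ (in the stated form of the lemma, the constant $M$ absorbs the additive $CT$ term into the bound, which is harmless as long as $\|y^\varepsilon_0\|_{\mathcal{H}}$ is controlled or augmented by $1$ in the application).

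There is essentially no serious obstacle: the argument is the textbook a priori estimate for a linear ODE in a Banach space with continuous coefficients. The only subtlety worth noting is that one must justify the differentiation/integration of $t \mapsto \|y^\varepsilon(t)\|_{\mathcal{H}}$, which is not in general differentiable; the integral-form approach sidesteps this by requiring only that $y^\varepsilon$ be a mild solution and that $\|\cdot\|_{\mathcal{H}}$ satisfy the triangle inequality under the integral. The Banach algebra assumption is used exactly once, to bound the nonlinearity-free product $\alpha^\varepsilon y^\varepsilon$.
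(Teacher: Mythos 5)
Your proof is correct, but it takes a genuinely different route from the paper's. The paper writes down the exact solution via the variation-of-constants formula
$$
y^\varepsilon(t) = y^\varepsilon_0 \exp\Bigl(\int_0^t \alpha^\varepsilon(s)\,ds\Bigr) + \int_0^t \exp\Bigl(\int_s^t \alpha^\varepsilon(u)\,du\Bigr)\beta^\varepsilon(s)\,ds,
$$
and reads the bound off directly from the boundedness of $\alpha^\varepsilon$ and $\beta^\varepsilon$. Your argument instead passes to the integral form, takes norms, and invokes scalar Gr\"onwall. The two approaches differ in one non-trivial respect: the paper's closed-form expression for $y^\varepsilon$ is only the exact solution when the values of $\alpha^\varepsilon$ at different times commute (e.g.\ when $\mathcal{H}$ is a \emph{commutative} Banach algebra, as is $L^\infty_{\tau,x}$ in the intended application); for a general non-commutative Banach algebra one would need a time-ordered exponential, and the formula as written does not solve the ODE. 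Your Gr\"onwall argument avoids this entirely and is valid for any Banach algebra, so it actually matches the generality of the lemma's hypotheses more faithfully than the paper's own proof. You also correctly flag that the conclusion $\|y^\varepsilon(t)\|_{\mathcal H}\leq M\|y^\varepsilon_0\|_{\mathcal H}$ is not literally what either argument yields when $\beta^\varepsilon\neq 0$ and $\|y^\varepsilon_0\|_{\mathcal H}$ can be arbitrarily small; the honest conclusion is $\|y^\varepsilon(t)\|_{\mathcal H}\leq M(1+\|y^\varepsilon_0\|_{\mathcal H})$, which is how the lemma is actually used downstream. The paper's proof is silent on this point, so your remark is a genuine, if minor, improvement in precision.
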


\begin{proof}[Proof of Lemma \ref{lemma}]
The proof of Lemma \ref{lemma} follows from the exact solution which can be written as
$$
y^\varepsilon(t) = y^\varepsilon_0 \exp\left(\int_0^t \alpha^\varepsilon(s) ds\right) + \int_0^t \left[\exp\left(\int_s^t \alpha^\varepsilon(u) du\right) \beta^\varepsilon(s) \right]ds.
$$
Then, one can straightforwardly  deduce from the assumptions on $\alpha$ and $\beta$, that  $\forall t\in [0, T]$, $\|y^\varepsilon(t) \|_{\cal H} \leq M \|y^\varepsilon_0 \|_{\cal H}$.

\end{proof}

\medskip

In the following, we will use this lemma to prove that the time and space derivatives of $W$, solution of \eqref{eq_W},
are uniformly bounded in the space ${\cal H}=L^\infty_{\tau, x}$ of bounded functions of $\tau$ and $x$.

\begin{proof}[Proof of Proposition \ref{theoremW}]
We first introduce the characteristic equations associated with \eqref{eq_W},
$$
\dot{x}(s)=\frac{c}{a(x(s))}, \;\; x(0)=x_0; \;\;   \dot{\tau}(s) =\frac{1}{\eps}, \;\; \tau(0)=\tau_0.
$$
Since  $c$ is a constant, this system can be solved analytically.
For $A$ given by \eqref{sol_exact_S}, since $a>0$, $A$ is a strictly increasing function, thus its inverse $A^{-1}$
is well defined so that the solution of the differential system is
$$
x(s)=A^{-1}(A(x_0)+cs), \;\; x(0)=x_0; \;\; \;\;   \tau(s) =\tau_0+\frac{s}{\eps}, \;\; \tau(0)=\tau_0.
$$
This motivates the following change of variables
\begin{equation}
\label{change}
\tilde{x}=A^{-1}(A(x)+cs), \;\;   \tilde{\tau} =\tau+\frac{s}{\eps},
\end{equation}
which enables to filter out the transport terms in \eqref{eq_W}.

\paragraph{Existence and estimate of $W$\\}
Using \eqref{change}, we write the equation satisfied by
$\widetilde{W}(s, x, \tau)=W(s, A^{-1}(A(x)+cs), \tau+\frac{s}{\eps})$ to get
\beq
\label{eq_Wtilde}
\partial_s \widetilde{W} =  -\frac{1}{a({\widetilde x})}e^{-i{\widetilde\tau}} r(e^{i{\widetilde\tau}} \widetilde{W}),
\eeq
where $\tilde x$ and $\tilde \tau$ are given by  \eqref{change}, and with the initial condition $\widetilde{W}(0, {x}, {\tau})=V(0, x, \tau)$ given by \eqref{eq_W}. Since $r$ is a  Lipschitz function,
according to the Cauchy-Lipschitz theorem in the Banach space $L^\infty_{\tau, x}$,
equation \eqref{eq_Wtilde} has a unique global solution. Furthermore, since
$a(\tilde x) \geq a_0>0$, we have
\begin{eqnarray*}
\|\widetilde{W}(s)\|_{L^\infty_{\tau, x}} &\leq & \|{V}(0)\|_{L^\infty_{\tau, x}} +C \Big\|\int_0^s \frac{1}{a(\tilde{x})} e^{-i\tilde\tau} r(e^{i\tilde\tau} \widetilde{W}(\sigma)) d\sigma \Big\|_{L^\infty_{\tau, x}} \nonumber\\
&\leq & \|{V}(0)\|_{L^\infty_{\tau, x}} + C \int_0^s (1+ \ \|\widetilde{W}(\sigma)\|_{L^\infty_{\tau, x}}) d\sigma,
\end{eqnarray*}
and using the Gronwall lemma,  we get
\begin{equation}
\label{estimateW}
\sup_{\varepsilon >0} \| \widetilde{W}(s)\|_{L^\infty_{\tau, x}} \leq C (1+  \|V(0) \|_{L^\infty_{\tau, x}}).
\end{equation}
Since the initial data $V(0)$, given by  \eqref{init_prepared}, is uniformly bounded with respect to $\eps$, we deduce that  $\widetilde W$, and then $W$, are also uniformly bounded.

We now  prove that  time and space derivatives, up to the second order, are bounded uniformly in $\varepsilon$.

\paragraph{Estimate of the first time derivative\\}
The first derivative $W_1=\partial_s W$ satisfies
\begin{equation}
\label{eq_V1}
\partial_s W_1 + \frac{c}{a(x)}\partial_x W_1 +\frac{1}{a(x)}r'(e^{i\tau} W) W_1 = -\frac{1}{\varepsilon}\partial_\tau W_1.
\end{equation}
As before, we consider the change of variables \eqref{change} so that
$\widetilde{W}_1(s, x, \tau)=W_1(s, A^{-1}(A(x)+cs), \tau+\frac{s}{\eps})$ solves
$$
\partial_s \widetilde{W}_1 +\frac{1}{a(\tilde x)} r'(e^{i\tilde\tau} \widetilde{W}) \widetilde{W}_1=0.
$$
This equation enters in the framework of Lemma \ref{lemma} with
$\alpha^\varepsilon(s)=(1/a(\tilde x))r'(e^{i\tilde\tau} \widetilde{W}(s))$
and $\beta^\varepsilon(s)=0$. Under the assumption on $r$ and the estimate \eqref{estimateW} on $\widetilde{W}$,
$\widetilde{W}_1(s)$ is  uniformly bounded provided that $\widetilde{W}_1(0)$ is bounded. From  equation \eqref{eq_W} on $W$ at $s=0$, one has
\begin{eqnarray*}
\widetilde{W}_1(0) &= & \partial_s W(0) \nonumber\\
&=&-\frac{1}{\varepsilon}\partial_\tau W(0) - \frac{c}{a(x)}\partial_x W(0) -\frac{1}{a(x)}e^{-i\tau}r(e^{i\tau} W(0)).
\end{eqnarray*}
The last term is bounded using the assumption on $r$ and the fact that $W(0)$ is bounded.
Considering the first term, with the choice of $W(0)$ given by \eqref{eq_W}, one gets
\begin{eqnarray*}
\Big |\frac{1}{\eps} \partial_\tau W(0) \Big | &=& \Big |\frac{1}{a(x)}(I-\Pi)(e^{-i\tau}r(e^{i\tau}\alpha(x))) \Big | \nonumber\\
&\leq & C  \| r(e^{i\tau}\alpha) \|_{L^\infty_{x, \tau}} \leq C (1+ \| \alpha \|_{L^\infty_{x}}).
\end{eqnarray*}
Then, for the second term, using the notations in \eqref{eq_W},
\begin{eqnarray}
\partial_x W(0)  & = & \alpha'(x) -\frac{\eps a'(x)}{a(x)^2} \Big[ G(0, \alpha)-G(\tau, \alpha)\Big] \nonumber\\
\label{dxV0}
&& + \frac{\eps}{a(x)} \Big[  H(0, \alpha)- H(\tau, \alpha)\Big],
\end{eqnarray}
where $H(\tau, \alpha) =:\partial_x G(\tau, \alpha)= {\cal L}^{-1}({\cal I}-\Pi) (r'(e^{i\tau}\alpha(x)) \alpha'(x))$.
Using the fact that ${\cal L}^{-1}$ is a bounded operator
on ${\cal C}^0(\T)$ and the assumptions on $r$, we get $\|G\|_{L^\infty_{x,\tau}} \leq C (1+\|\alpha\|_{L^\infty_{x}})$
and $ \|H\|_{L^\infty_{x,\tau}} \leq  C (1+\|\alpha\|_{L^\infty_{x}})\|\alpha' \|_{L^\infty_{x}}$. Finally, we have
\begin{eqnarray}
\Big | \partial_x W(0) \Big | &\leq& | \alpha'(x)| + C \Big|  \frac{\eps a'(x)}{a(x)^2} \Big |(1+  \|\alpha\|_{L^\infty_{x}})+\frac{\eps}{a}  \|\alpha' \|_{L^\infty_{x}} \nonumber\\
\label{estimate_dxV0}
&\leq & \frac{C\eps}{a_0^2} + \| \alpha \|_{W^{1, \infty}_{x}} \Big[1 +  C\frac{\eps}{a_0^2} + C\frac{\eps}{a_0} \Big]
\leq C.
\end{eqnarray}
We then conclude that $\widetilde{W}_1(0)=\partial_s W(0)$ is uniformly bounded.
As a consequence,
\begin{equation}
\label{estimateW1}
\|\widetilde{W}_1(s)\|_{L^\infty_{\tau, x}} \leq C,
\end{equation}
and then $\partial_s W$ is uniformly bounded in $\eps$.

\paragraph{Estimate of the second time derivative\\}
We proceed in a analogous way for $W_2=\partial_s^2 W=\partial_s W_1$
by taking the time derivative of equation \eqref{eq_V1}, which satisfies
\begin{equation}
\label{eq_V2}
\partial_s W_2 + \frac{c}{a(x)}\partial_x W_2 + \frac{1}{a(x)}e^{i\tau} r'' (e^{i\tau} W)W_1^2 +  \frac{1}{a(x)} r'(e^{i\tau} W) W_2 = -\frac{1}{\varepsilon} \partial_\tau W_2.
\end{equation}
Using \eqref{change}, $\widetilde{W}_2(t, x, \tau)=W_2(s, A^{-1}(A(x)+cs), \tau+\frac{s}{\eps})$ satisfies
$$
\partial_s \widetilde{W}_2 +\frac{1}{a(\tilde x)}r'(e^{i\tilde \tau} \widetilde{W}) \widetilde{W}_2+ \frac{1}{a(\tilde x)}e^{i\tilde \tau} r''(e^{i\tilde \tau} \widetilde{W}) \widetilde{W}_1^2 =0.
$$
We now use Lemma \ref{lemma} with $\alpha^\varepsilon(s)=(1/a(\tilde x))r'(e^{i\tilde\tau} \widetilde{W})$
and $\beta^{\varepsilon}(s)=(1/a(\tilde x)) e^{i\tilde\tau} r''(e^{i\tilde\tau} \widetilde{W}) \widetilde{W}_1^2$.
Using \eqref{estimateW},  \eqref{estimateW1}
and the assumption on $r$, one deduces that $\alpha^\varepsilon(s)$ and $\beta^\varepsilon(s)$
are uniformly bounded,
and one just needs to prove that the initial data $\widetilde{W}_2(0)= {W}_2(0)$ is bounded uniformly in $\eps$. Let us recall the expression of $W_2(0)$ using \eqref{eq_V1} at $s=0$
\beq
\label{eq_V20}
W_2(0) = \partial_s W_1(0) = -\frac{c}{a(x)}\partial_x W_1(0) - \frac{1}{a(x)}r'(e^{i\tau} W(0)) W_1(0) -\frac{1}{\varepsilon}\partial_\tau W_1(0).
\eeq
The second term in the right hand side is uniformly bounded since $W$ and $W_1$ are bounded.
The terms $\frac{1}{\varepsilon}\partial_\tau W_1(0)$ and $\frac{c}{a(x)}\partial_x W_1(0)$
need more care. First we write $W_1(0)$ using \eqref{eq_W}
\begin{eqnarray}
W_1(0)  &=&  \partial_s W(0) = -\frac{c}{a(x)}\partial_x W(0)-\frac{1}{a(x)}e^{-i\tau} r(e^{i\tau} W(0)) -\frac{1}{\eps}\partial_\tau W(0)  \nonumber\\
 &=&   -\frac{c}{a(x)}\partial_x W(0)-\frac{1}{a(x)}e^{-i\tau} r(e^{i\tau} W(0)) + \frac{1}{a(x)}({\cal I}-\Pi)[e^{-i\tau}r(e^{i\tau}\alpha(x))]  \nonumber\\
\label{eq_W1zero}
&=& -\frac{c}{a(x)}\partial_x W(0)+ \frac{1}{a(x)}e^{-i\tau}[ r(e^{i\tau} \alpha(x)) - r(e^{i\tau} W(0))]\nonumber \\
& &- \frac{1}{a(x)}\Pi[e^{-i\tau}r(e^{i\tau}\alpha(x))].
\end{eqnarray}
Then, we compute $(1/\eps)\partial_\tau W_1(0)$ using \eqref{eq_W}
\begin{eqnarray*}
\frac{1}{\eps}\partial_\tau W_1(0) &=& -\frac{c}{a\eps }\partial^2_{\tau x} W(0)+\frac{1}{a \eps} \partial_\tau\Big[ e^{-i\tau} [r(e^{i\tau} \alpha) -  r(e^{i\tau} W(0))]       \Big]  \nonumber\\
&=& -\frac{c}{a\eps} \Big[  \frac{\eps a'}{a^2} ({\cal I}-\Pi)[e^{-i\tau} r(e^{i\tau}\alpha)] - \frac{\eps}{a} ({\cal I}-\Pi)[\alpha' r'(e^{i\tau}\alpha)  ]  \Big] \nonumber\\
&&+\frac{1}{a\eps} \left\{ -i e^{-i\tau} [r(e^{i\tau} \alpha) - r(e^{i\tau} W(0))] + i\alpha r'( e^{i\tau} \alpha) \right. \nonumber\\
&& \left.- r'(e^{i\tau} W(0))[iW(0) + \partial_\tau W(0)]  \right\} \nonumber\\
&=& - \frac{c a'}{a^3} ({\cal I}-\Pi)[e^{-i\tau} r(e^{i\tau}\alpha)] +  \frac{c}{a^2} ({\cal I}-\Pi)[\alpha' r'(e^{i\tau}\alpha)  ]   \nonumber\\
&&- \frac{i }{a\eps} e^{-i\tau} [r(e^{i\tau} \alpha) - r(e^{i\tau} W(0))]
+\frac{1}{a\eps} r'( e^{i\tau} W(0)) \frac{\eps}{a}({\cal I}-\Pi)[e^{-i\tau}r(e^{i\tau}\alpha)]\nonumber\\
&&+\frac{i}{a\eps}\Big[ r'(e^{i\tau}\alpha)\alpha  - r'(e^{i\tau}W(0))W(0)    \Big].
\end{eqnarray*}
Using
the fact that ${\cal L}^{-1}$ is a bounded operator on ${\cal C}^0(\T)$ and the smoothness assumptions on $r$, one gets
\begin{eqnarray*}
\Big |\frac{1}{\eps}\partial_\tau W_1(0) \Big | &\leq &  \frac{C}{a_0^3}(1+ |\alpha|) +  \frac{C}{a_0^2}
+\frac{1}{a_0\eps} \; C | \alpha - W(0)|  +  \frac{C}{a_0^2} (1+|\alpha|)  + \frac{1}{a_0\eps} C | \alpha - W(0)|. \nonumber\\
\end{eqnarray*}
From \eqref{eq_W}, since $W(0)-\alpha = \frac{\eps}{a}[G(0, \alpha)-G(\tau, \alpha)]$ with $G$ uniformly bounded, one has
\begin{eqnarray*}
\Big |\frac{1}{\eps}\partial_\tau W_1(0) \Big | &\leq & C.
\end{eqnarray*}
We now focus on the term $\partial_x W_1(0)$ (which is the first term in  the rhs of \eqref{eq_V20}) and take the derivative of  \eqref{eq_W1zero}
with respect to $x$
\begin{eqnarray}
\partial_x W_1(0) &=& \frac{c a'}{a^2}\partial_x W(0) - \frac{c}{a}\partial_x^2 W(0) -\frac{a'}{a^2}e^{-i\tau}[ r(e^{i\tau}\alpha)  - r(e^{i\tau}W(0))] \nonumber\\
&& +\frac{1}{a}[ \alpha' r'(e^{i\tau}\alpha)  - \partial_x W(0)r'(e^{i\tau}W(0))] \nonumber\\
\label{dxdtV}
&&+\frac{a'}{a^2}\Pi[e^{-i\tau} r(e^{i\tau}\alpha)]
- \frac{1}{a}\Pi[\alpha' r'(e^{i\tau}\alpha)].
\end{eqnarray}
All the terms except the first one $\partial_x^2 W(0)$ have been estimated previously. Express $\partial_x^2 W(0)$
by taking the derivative of  \eqref{dxV0} with respect to $x$
\begin{eqnarray*}
\partial_x^2 W(0) & = & \alpha'' - \eps\frac{ a'' a - 2 (a')^2 }{a^3} [G(\tau=0, \alpha)-G(\tau, \alpha)] \nonumber\\
&& -2\frac{\eps a'}{a^2} \partial_x [G(\tau=0, \alpha)-G(\tau, \alpha)] + \frac{\eps }{a}\partial_x^2[G(\tau=0, \alpha)-G(\tau, \alpha)], \nonumber\\
\end{eqnarray*}
where $G(\tau, \alpha)={\cal L}^{-1}({\cal I}-\Pi) (e^{-i\tau}r(e^{i\tau}\alpha(x)))$ is used. Using the properties of ${\cal L}^{-1}$
and of $r$ and $a$, one can estimate $\partial_x^2 W(0)$
\begin{eqnarray}
|\partial_x^2 W(0)| &\leq& \|\alpha'' \|_{L^\infty_x}+ \frac{C\eps}{a_0^3}(1+ \|\alpha \|_{L^\infty_x}) + \frac{C\eps}{a_0^2}  \|\alpha' \|_{L^\infty_x} +  \frac{\eps }{a_0}  \|\alpha'' \|_{L^\infty_x} \nonumber\\
\label{estimate_dxxV0}
&\leq &  C.
\end{eqnarray}
Thus, from \eqref{eq_V20},  $W_2(0)$ is uniformly bounded and
we conclude with Lemma  \ref{lemma} that
\begin{equation}
\label{estimateW2}
\|\widetilde{W}_2(s)\|_{L^\infty_{\tau, x}} \leq C,
\end{equation}
so that $\partial_s^2 W$ is uniformly bounded.

\paragraph{First space derivative\\}
The function $Y_1=\partial_x W$ solves the following equation
\begin{equation}
\label{eq_Y1}
\partial_s Y_1 + \frac{c}{a}\partial_x Y_1 -\frac{ca'}{a^2}Y_1-\frac{a'}{a^2}e^{-i\tau }r(e^{i\tau} W)+ \frac{1}{a}r'(e^{i\tau} W)Y_1 =- \frac{1}{\varepsilon} \partial_\tau Y_1.
\end{equation}
 Again the function $\widetilde{Y}_1(t, x, \tau)=Y_1(s, A^{-1}(A(x)+cs), \tau+\frac{s}{\eps})$ satisfies
$$
\partial_s \widetilde{Y}_1 -\frac{ca'(\tilde x)}{a(\tilde x)^2}\widetilde{Y}_1-\frac{a'(\tilde x)}{a(\tilde x)^2}e^{-i\tilde\tau}r(e^{i\tilde\tau}\widetilde{W})+ \frac{1}{a(\tilde x)}r'(e^{i\tilde\tau} \widetilde{W})\widetilde{Y}_1 =0,
$$
where we used the notation in \eqref{change}.
Then, one can use Lemma \ref{lemma} with
$\alpha^\varepsilon(s) =- (1/a(\tilde x)) r'(e^{i\tilde\tau} \widetilde{W}) +(ca'(\tilde x)/a(\tilde x)^2)$ and
$\beta^\varepsilon(s)=(a'(\tilde x)/a(\tilde x)^2)e^{-i\tilde\tau} r(e^{i\tilde\tau} \widetilde{W})$
for which one gets  a uniform estimate thanks to the assumption on $r$ and the previous estimate on $\widetilde{W}$.  Moreover,
it has already been proved in \eqref{estimate_dxV0}
that the initial condition $\widetilde{Y}_1(0)=\partial_x W(0)$ (with $W(0)$ given by \eqref{init_prepared}) is uniformly bounded.
As a consequence,
\begin{equation}
\label{estimate_Z1}
\|\widetilde{Y}_1(s)\|_{L^\infty_{\tau, x}} \leq C.
\end{equation}
so that  $Y_1(s)=\partial_x W(s)$ is also uniformly bounded with repsect to $\eps$.

\paragraph{Second space derivative\\}
Considering $Y_2=\partial_x^2 W=\partial_x Y_1$, one gets  from \eqref{eq_Y1}
\begin{eqnarray}
\label{eq_Y2}
\partial_s Y_2 &+& \frac{c}{a}\partial_x Y_2- \frac{2ca'}{a^2} Y_2 - c\frac{a'' a-2(a')^2}{a^3}Y_1-\frac{2a'}{a^2} r'(e^{i\tau} W)Y_1 +\frac{1}{a}e^{i\tau} r''(e^{i\tau} W)Y_1^2 \nonumber\\
&& \hspace{-1cm}+\frac{1}{a}r'(e^{i\tau} W)Y_2
-\frac{a'' a-2(a')^2}{a^3}e^{-i\tau}r(e^{i\tau}W) =- \frac{1}{\varepsilon} \partial_\tau Y_2 .
\end{eqnarray}
Again, with the change of variable \eqref{change}, the equation for
$\widetilde{Y}_2(s, x, \tau )=Y_2(t, x+ct, \frac{1}{c\varepsilon} [A(x+ct)-A(x)]+\tau)$ writes (using the notations introduced above)
$$
\partial_t \widetilde{Y}_2 =\alpha^\eps (s) \widetilde{Y}_2 +\beta^\eps(s),
$$
where
\begin{eqnarray*}
\alpha^\eps (s)&=& \frac{2ca'(\tilde x)}{a(\tilde x)^2} -\frac{1}{a(\tilde x)}r'(e^{i\tilde\tau} \widetilde{W})\nonumber\\
\beta^\eps(s) &=&c\frac{a''(\tilde x) a(\tilde x)-2(a'(\tilde x))^2}{a(\tilde x)^3}\widetilde{Y}_1+\frac{2a'(\tilde x)}{a(\tilde x)^2} r'(e^{i\tilde\tau} \widetilde{W})\widetilde{Y}_1 -\frac{1}{a(\tilde x)}e^{i\tilde\tau} r''(e^{i\tilde\tau} \widetilde{W})\widetilde{Y}_1^2
\nonumber\\
&&+\frac{a''(\tilde x) a(\tilde x)-2(a'(\tilde x))^2}{a(\tilde x)^3}e^{-i\tilde\tau}r(e^{i\tilde\tau}\widetilde{W}),
\end{eqnarray*}
are uniformly bounded thanks to the previous estimates and the properties of $r$.
One then needs to check that the initial condition $\widetilde{Y}_2(0)=Y_2(0)= \partial_x^2 W(0)$ is uniformly bounded, but we recall  that $\partial_x^2 W(0)$ has already been estimated in \eqref{estimate_dxxV0}.
Hence, we can conclude
\begin{equation}
\label{estimate_Z2}
\| \widetilde{Y}_2(s)\|_{L^\infty_{\tau, x}} \leq C,
\end{equation}
so that $Y_2$ is uniformly bounded with respect to $\eps$.

\paragraph{Mixed space-time derivative\\}
By differentiating  \eqref{eq_Y1} with respect to $s$, one gets the equation satisfied by $Y_3=\partial_s Y_1=\partial_{sx}^2 W$
\begin{equation}
\label{eq_Y3}
\partial_t Y_3 + \frac{c}{a}\partial_x Y_3 - \frac{ca'}{a^2} Y_3 -\frac{a'}{a^2}r'(e^{i\tau}W)W_1
+\frac{1}{a} e^{i\tau} r''(e^{i\tau} W) W_1 Y_1 +\frac{1}{a}r'(e^{i\tau}W)Y_3 = -\frac{1}{\varepsilon}\partial_\tau Y_3.
\end{equation}
Using \eqref{change}, $\widetilde{Y}_3(t, x, \tau)=Y_3(s, A^{-1}(A(x)+cs), \tau+\frac{s}{\eps})$ satisfies
\begin{equation}
\label{eq_Y3tilde}
\partial_t \widetilde{Y}_3  - \frac{ca'(\tilde x)}{a(\tilde x)^2} \widetilde{Y}_3
-\frac{a'(\tilde x)}{a(\tilde x)^2}r'(e^{i\tilde\tau}\widetilde{W})\widetilde{W}_1
+\frac{1}{a(\tilde x)} e^{i\tilde\tau} r''(e^{i\tilde\tau} \widetilde{W}) \widetilde{W}_1 \widetilde{Y}_1 +\frac{1}{a(\tilde x)}r'(e^{i\tilde\tau}\widetilde{W})\widetilde{Y}_3= 0.
\end{equation}
We use Lemma \ref{lemma} with $\alpha^\varepsilon(s)=(ca'(\tilde x)/a(\tilde x)^2)-(1/a(\tilde x)) r'(e^{i\tilde\tau}\widetilde{W})$
and $\beta^{\varepsilon}(s)=(a'(\tilde x)/a(\tilde x)^2) r'(e^{i\tilde\tau} \widetilde{W}) \widetilde{W}_1-(1/a(\tilde x)) e^{i\tilde\tau} r''(e^{i\tilde\tau} \widetilde{W}) \widetilde{W}_1 \widetilde{Y}_1$ which are uniformly bounded thanks to the previous estimates and the smoothness  of $r$.
One now needs to check that the initial condition $\partial_{sx} W(0)$ is uniformly bounded. To do so, apply \eqref{eq_Y1} at $s=0$ to get
\begin{eqnarray}
\partial^2_{sx}W(0) &=& -\frac{c}{a}\partial_x Y_1(0) + \frac{ca'}{a^2}Y_1(0) + \frac{a'}{a^2}e^{-i\tau}r(e^{i\tau}W(0))
-\frac{1}{a}r'(e^{i\tau}W(0)) Y_1(0) - \frac{1}{\eps} \partial_\tau Y_1(0)\nonumber\\
&=& -\frac{c}{a}\partial^2_x W(0) + \frac{ca'}{a^2}\partial_x W(0) + \frac{a'}{a^2}e^{-i\tau}r(e^{i\tau}W(0))
\nonumber\\
&&-\frac{1}{a}r'(e^{i\tau}W(0)) \partial_x W(0) - \frac{1}{\eps} \partial_{x \tau} W(0)\nonumber\\
\end{eqnarray}
All the terms except the last one have already been estimated previously. Let us focus on
$\frac{1}{\eps} \partial^2_{x \tau} W(0)$
\begin{eqnarray*}
\Big| \frac{1}{\eps} \partial^2_{x \tau} W(0) \Big| &=&\Big|-  \frac{1}{\eps}  \partial_x \Big[ \frac{\eps}{a} ({\cal I}-\Pi)[e^{-i\tau} r(e^{i\tau}\alpha) ] \Big]\Big| \nonumber\\
&=&  \Big| \frac{a'}{a^2} ({\cal I}-\Pi)[e^{-i\tau} r(e^{i\tau}\alpha) ]  - \frac{1}{a} ({\cal I}-\Pi)[ r'(e^{i\tau}\alpha)\alpha' ]\Big| \nonumber\\
&\leq &   \frac{|a'|}{a_0^2} (1 +\|\alpha\|_{L^\infty_x})  + \frac{C}{a_0}\|  \alpha' \|_{L^\infty_x}, \nonumber\\
\end{eqnarray*}
which is uniformly bounded. Then, we conclude that
\begin{equation}
\label{estimateZ3}
\|\widetilde{Y}_3(t)\|_{L^\infty_{\tau, x}} \leq C,
\end{equation}
so that $\partial_{sx}^2 W$ is bounded uniformly  with respect to $\eps$.

\end{proof}

\subsection{A numerical scheme for the equation of $V$}
In this section, we focus on the numerical analysis of a first order (in time and space) numerical scheme
for the equation of $V$:  \eqref{eq_V}, with initial condition \eqref{init_prepared}, in which the variable $\tau$ is kept continuous. For the sake of simplicity, our convergence analysis will be restricted to constant and nonnegative convection term $c(x)=c$. We also assume that $a(x)\geq a_0$ for all $x\in [0,1]$, where $a_0$ is a positive constant.  The numerical tests, however, will not be restricted by these assumptions.

We define a uniform grid in time $t_n=n\Delta t$
 in a time interval $[0, T]$, $n=0, 1, \dots, N$, $N\Delta t=T$ and in space $x_j=j\Delta x$, $j=0, 1, \dots, N_x$, $\Delta x=1/N_x$
in the spatial interval $[0, 1]$ (recall that periodic boundary conditions are considered in space).
Denoting $V^n_j(\tau)\approx V(t_n, x_j, \tau)$, $\tau\in \T=[0, 2\pi]$,
the numerical scheme for \eqref{eq_V} advances the solution from $t_n$ to $t_{n+1}$ through
\begin{equation}
\label{scheme_V}
\frac{V_j^{n+1} - V_j^n}{\Delta t} + c\frac{V^n_j - V^n_{j-1}}{\Delta x} +e^{-i\tau}r(e^{i\tau} V_j^n)
= -\frac{a(x_j)}{\varepsilon}\partial_\tau V_j^{n+1},
\end{equation}
with $V^0_j(\tau)$ given by \eqref{init_prepared}. In this scheme, the spatial discretization is the upwind scheme.
In the following theorem, we prove that the numerical scheme \eqref{scheme_V}, with initial data \eqref{init_prepared},  is not only a first order approximation of \eqref{eq_V}, but more importantly this first order approximation is {\em uniform } in $\eps$.

\bigskip

\begin{theorem}
\label{theorem2}
Assume that $a:[0, 1]\rightarrow \R$ is a ${\cal C}^2$ function satisfying $a(x)\geq a_0 >0, \forall x\in [0, 1]$,
and that the CFL condition $c\Delta t/\Delta x<1$
is satisfied.
Then $\exists C>0$ independent of $ \Delta t, \Delta x$ and $\eps$, such  that
\begin{equation}
\ds \sup_{\varepsilon\in ]0, 1]} \| V(t_n, x_j) - V^n_j \|_{L^\infty_\tau} \leq C(\Delta t+\Delta x),
\end{equation}
for all $n=0, \dots, N$, $ n\Delta t\leq T$ and all $j=0, \dots, N_x$.
\end{theorem}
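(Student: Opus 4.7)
The plan is a standard consistency-plus-stability argument, but with essentially all of the work concentrated in establishing \emph{uniform-in-$\eps$} consistency. First I would define the truncation error obtained by substituting the exact solution of \eqref{eq_V} into the scheme:
\[
\eta^n_j := \frac{V(t_{n+1},x_j)-V(t_n,x_j)}{\Delta t} + c\,\frac{V(t_n,x_j)-V(t_n,x_{j-1})}{\Delta x} + e^{-i\tau}r(e^{i\tau}V(t_n,x_j)) + \frac{a(x_j)}{\eps}\partial_\tau V(t_{n+1},x_j).
\]
Taylor expansion in $t$ and $x$ immediately yields contributions of size $O(\Delta t\,\|\partial_t^2 V\|_{L^\infty})$ and $O(\Delta x\,\|\partial_x^2 V\|_{L^\infty})$, both uniformly controlled by Theorem~\ref{theorem1}. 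The delicate piece is
\[
\frac{a(x_j)}{\eps}\bigl[\partial_\tau V(t_{n+1},x_j)-\partial_\tau V(t_n,x_j)\bigr] = \frac{a(x_j)\,\Delta t}{\eps}\,\partial^2_{t\tau}V(\bar t,x_j,\tau),
\]
which superficially carries a factor $1/\eps$. To control it uniformly, I would use \eqref{eq_V} itself to rewrite $(1/\eps)\partial_\tau V = -(1/a)\bigl[\partial_t V + c\,\partial_x V + e^{-i\tau}r(e^{i\tau}V)\bigr]$ and then differentiate once in $t$; the outcome is an expression in $V$, $\partial_t V$, $\partial_t^2 V$, $\partial^2_{tx}V$ with coefficients involving $a$, $1/a$, $r'$, all uniformly bounded by Theorem~\ref{theorem1} and the hypotheses on $a\ge a_0>0$ and $r\in C^2$. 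This yields $\|\eta^n_j\|_{L^\infty_\tau}\le C(\Delta t+\Delta x)$ independent of $\eps$.

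Second, I would form the error $e^n_j(\tau):=V(t_n,x_j,\tau)-V^n_j(\tau)$ and subtract \eqref{scheme_V} from the perturbed exact equation to get, with $\lambda=\Delta t/\Delta x$,
\[
\Bigl(I + \frac{a(x_j)\,\Delta t}{\eps}\partial_\tau\Bigr) e^{n+1}_j = (1-c\lambda)\,e^n_j + c\lambda\,e^n_{j-1} - \Delta t\,e^{-i\tau}\bigl[r(e^{i\tau}V(t_n,x_j))-r(e^{i\tau}V^n_j)\bigr] + \Delta t\,\eta^n_j.
\]
The central stability observation is that for every $\mu\ge 0$ the operator $(I+\mu\partial_\tau)^{-1}$ is a contraction on $L^\infty(\T)$: solving $w+\mu\,\partial_\tau w=g$ on the periodic torus by the integrating factor $e^{\tau/\mu}$ (plus periodicity) expresses $w$ as a convex average of values of $g$, giving $\|w\|_{L^\infty_\tau}\le\|g\|_{L^\infty_\tau}$. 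Combined with the CFL $c\lambda<1$ (which makes the first two terms on the right a convex combination in $j$) and the Lipschitz bound $\|r'\|_\infty\le L$, taking $L^\infty_\tau$ norms gives
\[
\max_j \|e^{n+1}_j\|_{L^\infty_\tau} \le (1+L\,\Delta t)\,\max_j \|e^n_j\|_{L^\infty_\tau} + \Delta t\,\max_j\|\eta^n_j\|_{L^\infty_\tau}.
\]

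Finally, since $V^0_j(\tau)=V(0,x_j,\tau)$ by the prescription \eqref{init_prepared}, the initial error vanishes, and a discrete Gronwall inequality closes the estimate as
\[
\max_j\|e^n_j\|_{L^\infty_\tau}\le e^{LT}\,T\,\max_{n,j}\|\eta^n_j\|_{L^\infty_\tau}\le C(\Delta t+\Delta x),
\]
uniformly in $\eps\in(0,1]$. The hard part of the argument is unambiguously the first step: a naive bound on the last term of $\eta^n_j$ would be $O(\Delta t/\eps)$ and would destroy uniformity; the trick is to use the PDE to exchange the singular factor for the bounded time-space derivatives delivered by Theorem~\ref{theorem1}, which is precisely the payoff of the well-prepared initial data \eqref{init_prepared}. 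Everything downstream is routine discrete maximum-principle bookkeeping.
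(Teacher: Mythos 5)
Your proof is correct, and its skeleton — uniform-in-$\eps$ consistency, the $L^\infty_\tau$-contraction of $(I+\mu\partial_\tau)^{-1}$ on the periodic torus, the CFL convex combination, a Lipschitz factor on $r$, and discrete Gronwall with vanishing initial error — matches the paper's proof in every structural respect, with all consistency bounds drawn from Theorem~\ref{theorem1} exactly as the paper does. The only place you take a different route is in how the singular $1/\eps$ is disposed of in the truncation error: you expand the time quotient around $t_n$, which produces an explicit residual $\frac{a(x_j)}{\eps}\bigl[\partial_\tau V(t_{n+1})-\partial_\tau V(t_n)\bigr]$, and then you cancel the $1/\eps$ by invoking the PDE~\eqref{eq_V} differentiated once in $t$. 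The paper instead Taylor-expands the time quotient to $\partial_t V(t_{n+1})$ and substitutes the PDE \emph{at time $t_{n+1}$}, so that the exact relation already carries $-\frac{a(x_j)}{\eps}\partial_\tau V(t_{n+1})$, which matches the implicit $\tau$-term of the scheme verbatim; the $1/\eps$ never appears in the truncation error, at the cost of a cross-derivative term $-c\int_{t_n}^{t_{n+1}}\partial^2_{xt}V\,\mathrm{d}t$ and a reaction consistency term $\int_{t_n}^{t_{n+1}} r'(e^{i\tau}V)\partial_t V\,\mathrm{d}t$. Both routes land on the same bounded derivatives $\partial_t V,\ \partial_t^2 V,\ \partial_x^2 V,\ \partial^2_{tx}V$ and yield identical conclusions; the paper's ordering is a bit cleaner in that the dangerous-looking $1/\eps$ factor never has to be displayed, but there is no mathematical gap in your version.
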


\begin{proof}
First, we check that the scheme is well defined. Assuming $V^n_j$ is periodic in $\tau$ with period $2\pi$,
then it is easy to see that $V^{n+1}_j$ is also periodic with period $2\pi$. We proceed in an analogous way as in \cite{cclm} and introduce the operator $Q_j$ defined from ${\cal C}^1(\T)$ into ${\cal C}^0(\T)$ by
\beq
\label{qj}
\forall \tau \in \T, \;\; (Q_j g)(\tau) = g(\tau) + \frac{a(x_j)}{\varepsilon}(\partial_\tau g)(\tau).
\eeq
This operator is invertible and its inverse can be written as
$$
\forall \tau \in \T, \;\; (Q_j^{-1} g)(\tau) = \frac{\varepsilon/a(x_j)}{\exp(\varepsilon/a(x_j) 2\pi)-1} \int_\tau^{\tau+2\pi}
\exp(\varepsilon/a(x_j) (\theta-\tau))g(\theta) d\theta.
$$
Moreover, since
$$
 \frac{\varepsilon/a(x_j)}{\exp(\varepsilon/a(x_j) 2\pi)-1} \int_0^{2\pi}
\exp(\varepsilon/a(x_j) \theta) d\theta = 1,
$$
one gets
\begin{equation}
\label{qim1}
\forall g\in {\cal C}^0(\T), \;\; \|Q_j^{-1} g\|_{L^{\infty}_\tau } \leq \| g \|_{L^{\infty}_\tau}, \;\; \forall j=0, \cdots, N_x.
\end{equation}

Let us now study the error of the scheme \eqref{scheme_V}.
First, we perform Taylor expansions in time and space. On the one side, one gets
\begin{eqnarray*}
\nonumber
&& \frac{V(t_{n+1}, x_j)-V(t_{n}, x_j)  }{\Delta t}\\
 &=&
\partial_t V(t_{n+1}, x_j) - \frac{1}{\Delta t}\int_{t_n}^{t_{n+1}} (t-t_n) \partial_t^2 V(t, x_j) dt  \nonumber\\
&=& -c\partial_x V(t_{n+1}, x_j) - e^{-i\tau}r(e^{i\tau} V(t_{n+1}, x_j)) - \frac{a(x_j)}{\varepsilon}\partial_\tau V(t_{n+1}, x_j)\nonumber\\
&& - \frac{1}{\Delta t}\int_{t_n}^{t_{n+1}} (t-t_n) \partial_t^2 V(t, x_j) dt. 
\end{eqnarray*}
And on the other side,
\begin{eqnarray*}
\frac{V(t_{n}, x_j)-V(t_{n}, x_{j-1})  }{\Delta x} &=& \partial_x V(t_n, x_j)
-\frac{1}{\Delta x} \int_{x_{j-1}}^{x_j} (x-x_{j-1})\partial_x^2 V(t_n, x) dx.
\end{eqnarray*}
Gathering both equalities gives
\begin{align}
&\hspace{-2cm} \frac{V(t_{n+1}, x_j)-V(t_{n}, x_j)  }{\Delta t} +c \frac{V(t_{n}, x_j)-V(t_{n}, x_{j-1})  }{\Delta x} \nonumber\\
&=
-c\partial_x V(t_{n+1}, x_j)
- e^{-i\tau}r(e^{i\tau} V(t_{n+1}, x_j)) + (R_t)^n_j\nonumber\\
&\hspace{0.4cm}- \frac{a(x_j)}{\varepsilon}\partial_\tau V(t_{n+1}, x_j)  + c\partial_x V(t_n, x_j) +(R_x)^n_j,  \nonumber\\
&= -c\partial_x [V(t_{n+1}, x_j) - V(t_n, x_j)]  + (R_t)^n_j + (R_x)^n_j \nonumber\\
& \hspace{0.4cm}- e^{-i\tau}r(e^{i\tau} V(t_{n+1}, x_j))- \frac{a(x_j)}{\varepsilon}\partial_\tau V(t_{n+1}, x_j)\nonumber\\
&= -c \int_{t_n}^{t_{n+1}} \partial_{xt}^2 V(t, x_j) dt  + (R_t)^n_j + (R_x)^n_j \nonumber\\
\label{eq_Uex}
& \hspace{0.4cm}- e^{-i\tau}r(e^{i\tau} V(t_{n+1}, x_j))- \frac{a(x_j)}{\varepsilon}\partial_\tau V(t_{n+1}, x_j),
\end{align}
where $(R_x)^n_j=-\frac{c}{\Delta x} \int_{x_{j-1}}^{x_j} (x-x_{j-1})\partial_x^2 V(t_n, x) dx$ and
$(R_t)^n_j=- \frac{1}{\Delta t}\int_{t_n}^{t_{n+1}} (t-t_n) \partial_t^2 V(t, x_j) dt $ denote the integral remainders of the previous Taylor expansions.

Denoting by ${\cal E}^n_j =V(t_n, x_j) - V^n_j$ the error, the difference between \eqref{eq_Uex}
and \eqref{scheme_V} gives
\begin{eqnarray}
\frac{{\cal E}^{n+1}_j-{\cal E}^n_j}{\Delta t} + c\frac{{\cal E}^{n}_j-{\cal E}^n_{j-1}}{\Delta x} \hspace{-0.5cm}&&
+ e^{-i\tau}\Big[ r(e^{i\tau} V(t_{n+1}, x_j))-r(e^{i\tau} V^n_j)\Big] +c \int_{t_n}^{t_{n+1}} \partial_{xt}^2 V(t, x_j) dt \nonumber\\
\label{eq_E}
&&= (R_x)^n_j + (R_t)^n_j -\frac{a(x_j)}{\varepsilon} \partial_\tau {\cal E}^{n+1}_j.
\end{eqnarray}
Here, we focus on the third term of the left hand side of \eqref{eq_E}
\begin{eqnarray*}
&& \hspace{-1.cm}e^{-i\tau}\Big[r(e^{i\tau} V(t_{n+1}, x_j))-r(e^{i\tau} V^n_j)\Big] \nonumber\\
&=& e^{-i\tau}\Big[r(e^{i\tau} V(t_{n+1}, x_j)) - r(e^{i\tau}V(t_{n}, x_j))\Big] + e^{-i\tau}\Big[r(e^{i\tau} V(t_{n}, x_j))- r(e^{i\tau}V^n_j)\Big] \nonumber\\
&=&  \int_{t_n}^{t_{n+1}}  r'(e^{i\tau} V(t, x_j)) \partial_t V(t, x_j) dt +{\cal E}^n_j \int_0^1 r'(e^{i\tau} V^n_j + t e^{i\tau}{\cal E}^n_j) dt  \nonumber\\
&=:& (R_{1})^n_j + {\cal E}^n_j  (R_{2})^n_j.
\end{eqnarray*}
Hence, from \eqref{eq_E}, one can express the error  ${\cal E}^{n+1}_j$
with respect to ${\cal E}^{n}_j$ and ${\cal E}^{n}_{j-1}$
\begin{eqnarray}
\label{enp1}
{\cal E}^{n+1}_j &=&Q_j^{-1}\left[  \Big(1-\frac{c\Delta t}{\Delta x} - (R_{2})^n_j \Delta t\Big) {\cal E}^n_j
+ \frac{c\Delta t }{\Delta x} {\cal E}^n_{j-1} + \Delta t g^n_j\right],
\end{eqnarray}
where $Q_j$ is given by \eqref{qj}
and $g^n_j = - (R_{1})^n_j + (R_x)^n_j + (R_t)^n_j -c\int_{t_n}^{t_{n+1}} \partial_{xt}^2 V(t, x_j) dt$.
First, using Theorem 2.1 and the assumption on $r$, one has
\begin{eqnarray}
\| g^n_j\|_{L^\infty_\tau} &\leq & \| (R_{1})^n_j \|_{L^\infty_\tau} + \| (R_{x})^n_j \|_{L^\infty_\tau} + \| (R_t)^n_j \|_{L^\infty_\tau} +C\Delta t \nonumber\\
\label{estimate_g}
&\leq & C\Delta t +C\Delta x +C\Delta t + C\Delta t,
\end{eqnarray}
where $C$ is some positive constant which does not depend on $\varepsilon$. Second,
we now consider the $L^\infty_\tau$ norm of \eqref{enp1}
and use \eqref{qim1} to get (under the CFL condition $c\Delta t<\Delta x$)
\begin{eqnarray*}
\|{\cal E}^{n+1}_j  \|_{L^\infty_\tau} &\leq & \Big\|  \Big(1-\frac{c\Delta t}{\Delta x} +(R_{2})^n_j \Delta t\Big)  {\cal E}^n_j
+ \frac{c\Delta t }{\Delta x} {\cal E}^n_{j-1} + \Delta t g^n_j \Big\|_{L^\infty_\tau}\nonumber\\
&\leq & \max_j  \|   {\cal E}^n_j \|_{L^\infty_\tau} +\Delta t \| (R_{2})^n_j  \|_{L^\infty_\tau}\|   {\cal E}^n_j \|_{L^\infty_\tau}  + C\Delta t(\Delta t+\Delta x)\nonumber\\
&\leq & \| {\cal E}^n \|_{L^\infty_\tau}(1+C\Delta t) + C\Delta t(\Delta t+\Delta x),
\end{eqnarray*}
where we denote by ${\cal E}^n = \max_{j=0, \dots, N_x} |{\cal E}^n_j|$.
A discrete Gronwall lemma enables one to get the required uniform estimate
$$
\| {\cal E}^{n} \|_{L^\infty_\tau} \leq C (\Delta t +\Delta x) \exp(C T). 
$$

\end{proof}

\begin{remark}
Higher order methods can be constructed by expanding to higher power in $\eps$ in the Chapman-Enskog expansion
presented in subsection \ref{subsection_IC} and using higher order approximation scheme in time and space.
We will not elaborate on this further in this paper.
\end{remark}

\subsection{The cases where the oscillations come from both initial data and sources}\label{mixed}

In this part, we discuss the case where we may have high-oscillations in both the non linear PDE model and in the initial data. This corresponds to cases where we have a model of type \eqref{eq_scalar} with $a\ne 0$ and  $u(0, x)=f_{in}(x,\beta(x)/\varepsilon)$, the function $f_{in}=f_{in}(x,\tau)$ being periodic in $\tau$.  The previous strategy
cannot be applied in this general case and the uniform boundness of time and space derivatives of $V$ at arbitrary order is no more garanteed.
However, we can  ensure that the first time and space derivatives are uniformly bounded.

\paragraph{The case of  one-mode initial data}
In this case, the initial condition in \eqref{eq_scalar} takes the form $u(0, x)=f_{in}(x,\beta(x)/\varepsilon)= \alpha(x)e^{i\beta(x)/\varepsilon}$. If we follow the analysis above and try to transform the equation on $u$ into  equations on the profile $V$ and the oscillation phase $S$,  then the only possibility to ensure some minimal smoothness on the augmented problem (\ref{eq_V}) and (\ref{eq_S}) is the following choice of the initial data
\begin{equation}
V(0, x, \tau )=\alpha(x),  \qquad S(0, x )=\beta(x).
\end{equation}
In particular, the initial data for $V$ belong to the kernel of $\partial_\tau$, and, according to the previous analysis, this only ensures that the first time and space derivatives
are bounded. However, it is not possible to construct ''well-prepared initial data'' $V(0, x,\tau)$ so that high order time-space derivatives  are also uniformly bounded.

\paragraph{The case of multi-modes initial data}
In this case,  we  expand the initial data as $f_{in}(x, \beta(x)/\varepsilon)=\sum_k f_k(x)e^{ik\beta(x)/\varepsilon}$
and one may decompose  the solution as $u(t, x)=\sum_k u_k(t, x)$, where each component $u_k$
satisfies
\begin{eqnarray*}
\partial_t u_k + c(x)\partial_x u_k  +r(u)\delta_{1k}= \frac{ia(x)}{\varepsilon}u_k, && u_k(0, x)=f_k(x)e^{ik\beta(x)/\varepsilon},
\end{eqnarray*}
and $\delta_{ij}$ is the  usual Kronecker symbol.

One can then apply the previous strategy, by considering the augmented functions $U_k(t, x, \tau)$
satisfying $U_k(t, x, k S(t, x)/\varepsilon )= u_k(t, x)$. This gives
$$
\partial_t U_k + c(x)\partial_x U_k + \frac{1}{\varepsilon}[\partial_tS+c(x)\partial_x S]\partial_\tau U_k+r(U)\delta_{1k}
= \frac{ia(x)}{\varepsilon} U_k, \;\; U_k(0, x, \tau)=f_k(x)e^{i\tau},
$$
from which we deduce the equation for the phase $S$
$$
\partial_t S+ c(x)\partial_x S=a(x), \;\; S(0, x)=\beta(x),
$$
and for $U_k$
$$
\partial_t U_k + c(x)\partial_x U_k + r(U)\delta_{1k} = -\frac{a(x)}{\varepsilon}[\partial_\tau U_k-iU_k], \;\; U_k(0, x, \tau)=f_k(x)e^{i\tau}.
$$

\subsection{The full numerical algorithms}
\label{algo}
\bigskip

In this section, details of the algorithm for solving $V$ and $S$  are given.
Periodic boundary conditions are considered in the $x$ and $\tau$ directions.
The uniform grids in time and space are defined as previously.   In addition, we also use a uniform mesh
for the $\tau$ direction: $\tau_\ell=\ell\Delta \tau$, for $\ell=0, \dots, N_\tau, \Delta \tau=2\pi/N_\tau$.
In the following description, the variable $\tau$ is kept continuous for simplicity. In our numerical
experiments, the pseudo spectral method is used for this variable. We denote by $V^n_j(\tau) \approx V(t^n, x_j, \tau)$ and $S^n_j \approx S(t^n, x_j)$
 the discrete unknowns.

We start with $V^0_j$ given by  \eqref{init_prepared} and $S^0_j=0$.
Then, for all $n\geq 0$, the scheme reads (assuming $c>0$)
\begin{eqnarray*}
&&\frac{V_j^{n+1} - V_j^n}{\Delta t} + c\frac{V^n_j - V^n_{j-1}}{\Delta x} +e^{-i\tau}r(e^{i\tau} V_j^n)
= -\frac{a(x_j)}{\varepsilon}\partial_\tau V_j^{n+1}, \nonumber\\
&&\frac{S_j^{n+1} - S_j^n}{\Delta t} + c\frac{S^n_j - S^n_{j-1}}{\Delta x} = a(x_j).
\end{eqnarray*}
At the final time $t_{f}=N\Delta t$ of the simulation, we come back to the original solution $u$ through
\beq
\label{u_numeric}
u(t_f, x_j) = V^N_j\Big(\tau=\frac{S^N_j}{\eps}\Big).
\eeq
Since $S^N_j/\eps$ does not coincide with a grid point $\tau_\ell$, a trigonometric interpolation
is performed. Note that higher order numerical schemes can be used and are necessary since
one needs to obtain $S^N_j/\varepsilon$ which may lead to large error in \eqref{u_numeric}
if $S$ is not computed accurately. {\it In practice we will use the pseudo-spectral method in $x$
to solve the equation for $S$}.

\subsection{Numerical tests}\label{num_test}
\bigskip

We present some tests solving \eqref{eq_scalar} with $r(u)=u^2/(u^2+2|u|^2)$, $c(x)=\cos^2(x)$,
$a(x)=3/2+\cos(2x)>0$ and the following non-oscillatory initial data
$$
u(0, x)=1+\frac{1}{2} \cos(2x) + i \Big[1+\frac{1}{2} \sin(2x)\Big],  \;\; x\in I= [-\pi/2, \pi/2].
$$
We compare the solution obtained by a direct method with resolved numerical parameters (smaller than $\varepsilon$)
and the solution obtained by the new approach presented previously.
The numerical parameters are as follows:
$\Delta t=\Delta x/2=|I|/N$ ($|I|$ being the length of the interval $I$) with $N=N_{ts}$ for the new approach
and $N=N_d$ for the direct approach. We choose $N_\tau=64$.

In Figures \ref{fig1apos}, \ref{fig2apos}, \ref{fig2apos_S} and \ref{fig3apos},
we plot the $\ell^\infty$ error in space (as a function of a range of $N_{ts}$)
between a reference solution obtained by a direct method with resolved numerical parameters
and the solution obtained by the new method. The error is computed
for different values of $\eps$, at the final time $t_f=0.1$, in different configurations.

In Figure \ref{fig1apos}, the initial data is well-prepared (given by \eqref{init_prepared})
and an exact solution for $S$ is considered. We observe on the left part of Figure \ref{fig1apos}
that for different values of $\eps$ ($\eps=1, \dots, 10^{-3}$), the new method is uniformly
first order accurate both in space and time. On the right part of Figure \ref{fig1apos},
the error is plotted as a function of $\eps$, for different values of $N_{ts}$
($N_{ts}= 20, 40, 100, 200, 1000$); each curve, corresponding to a given $N_{ts}$,
is almost constant, indicating that the error is independent from $\eps$.

In Figure \ref{fig2apos}, the initial data is well-prepared (given by \eqref{init_prepared})
but we now consider a numerical calculation of the phase $S$.
We used a first order upwind scheme together with a first order time integrator to compute $S$.
Hence, a numerical error ${\cal O}(\Delta x +\Delta t)$ is generated on $S$, which is divided by $\eps$
to construct the approximation of $u(t_f, x_j)$. This explains the behavior
of the curve associated to $\eps=10^{-3}$ for instance, in the left part of Figure \ref{fig2apos}.
This is also emphasized on the right part of Figure \ref{fig2apos}:
even if the curves do not cross each other, the error increases as $\eps$ decreases.
To improve this, the numerical scheme for $S$ is changed to a pseudo-spectral method
in space with a fourth-order Runge-Kutta time integrator. The corresponding results are displayed
in Figure \ref{fig2apos_S}. We observe that, since the error on $S$ is now very small, the uniform accuracy
is recovered.

In Figure \ref{fig3apos}, an exact calculation of the phase $S$ is considered
but now the initial data  $V^0_j=u(0, x_j)$ is not a corrected one .  As expected (see \cite{clm, cclm}), the uniform
accuracy is lost since the error depends on $\eps$, but we can observe that
the numerical error is still small even for small $\varepsilon$.

In Figure \ref{fig1}, the same diagnostics as before are displayed,
but we explore the possibility for $a$ to {\it vanish} at isolated points by considering $a(x)=1+\cos(2x)$.
With this choice of $a$, Theorem \ref{theorem1} does not apply directly.
We consider the case with a corrected initial data and an exact calculation for $S$.
The same results as before are obtained in the case of vanishing $a$: the new method
is first order uniformly accurate in $\eps$.

Then, we consider the asymptotic model given by  \eqref{asymptotic_model} for which
a standard numerical approximation (first order upwind scheme in space and first order explicit
time integrator)  is used to get $\bar{u}(t_f, x)$.
Then, the quantity $e^{-iS(t_f, x)/\eps} \bar{u}(t_f, x)$ is computed where the phase $S$ is solved  exactly.
In Figure \ref{fig4}, the error ($\ell^\infty$ in space) between $e^{-iS(t_f, x)/\eps} \bar{u}(t_f, x)$ and the
solution of the new method is displayed as a function of $\eps$ (logarithmic scale).
The error between the two models is ${\cal O}(\eps)$. This numerically justifies Remark
\ref{remark_asymptotic_model}.

Finally, in Figures \ref{fig5} and \ref{fig6}, we illustrate the space-time oscillations
arising in the solution, with $a(x)=3/2+\cos(2x)>0$. In Figure \ref{fig5}, the space dependence of the real part of the solution $u(t_f=1, x)$
is displayed for $\eps=5\cdot 10^{-3}$. A reference solution (obtained by a direct method
with resolved numerical parameters $N_d=4000$ and $\Delta t=5\cdot 10^{-4}$)
and the solution obtained by the new approach (with $N_{ts}=100$ and
$\Delta t=\pi/200\approx 0.0157$, $N_\tau=16$, well-prepared initial data and an exact $S$)
are plotted in the left part of Figure \ref{fig5} (the right being a zoom of the left part).
We can observe that the new method is able to capture very well high oscillations in space.

In Figure \ref{fig6}, we focus on time oscillations by considering the following time dependent quantity (root mean
square type)
$$
{\cal R}(t) = \Big | \int_I u(t, x) x \, \dd x\Big |.
$$
The numerical quadrature for the reference solution is performed on the mesh used for the new method.
Using the same parameters as before, one can observe that the solution of the new method  fits very well with the reference
solution even when the oscillations are not resolved by the time step $\Delta t\approx 0.0157$. The right part
of Figure \ref{fig6} is a zoom of the left part.

\begin{figure}
\begin{center}
\begin{tabular}{ccll}
\includegraphics[width=0.6\linewidth]{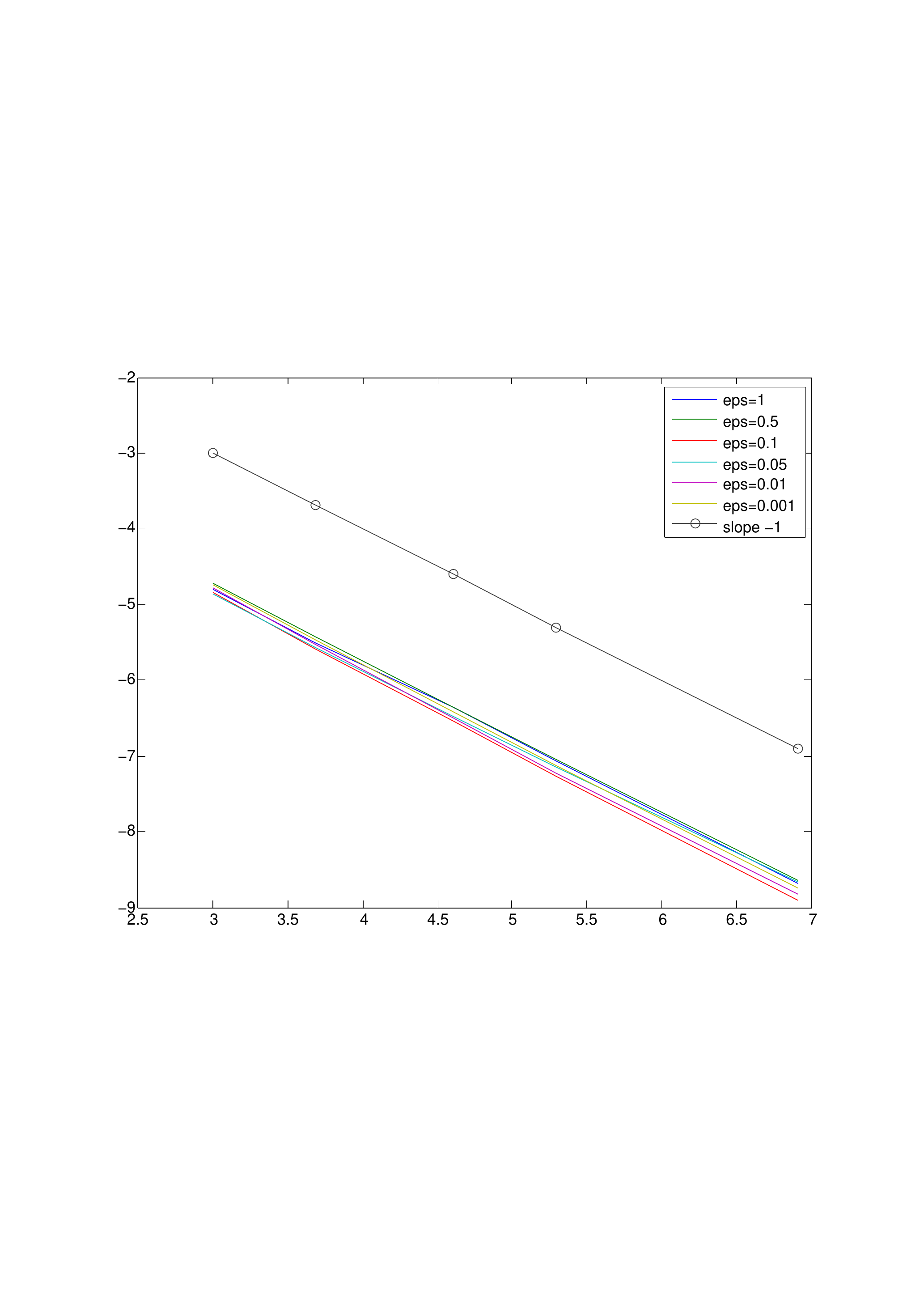}&
\hspace{-2cm}\vspace{-3cm}
\includegraphics[width=0.6\linewidth]{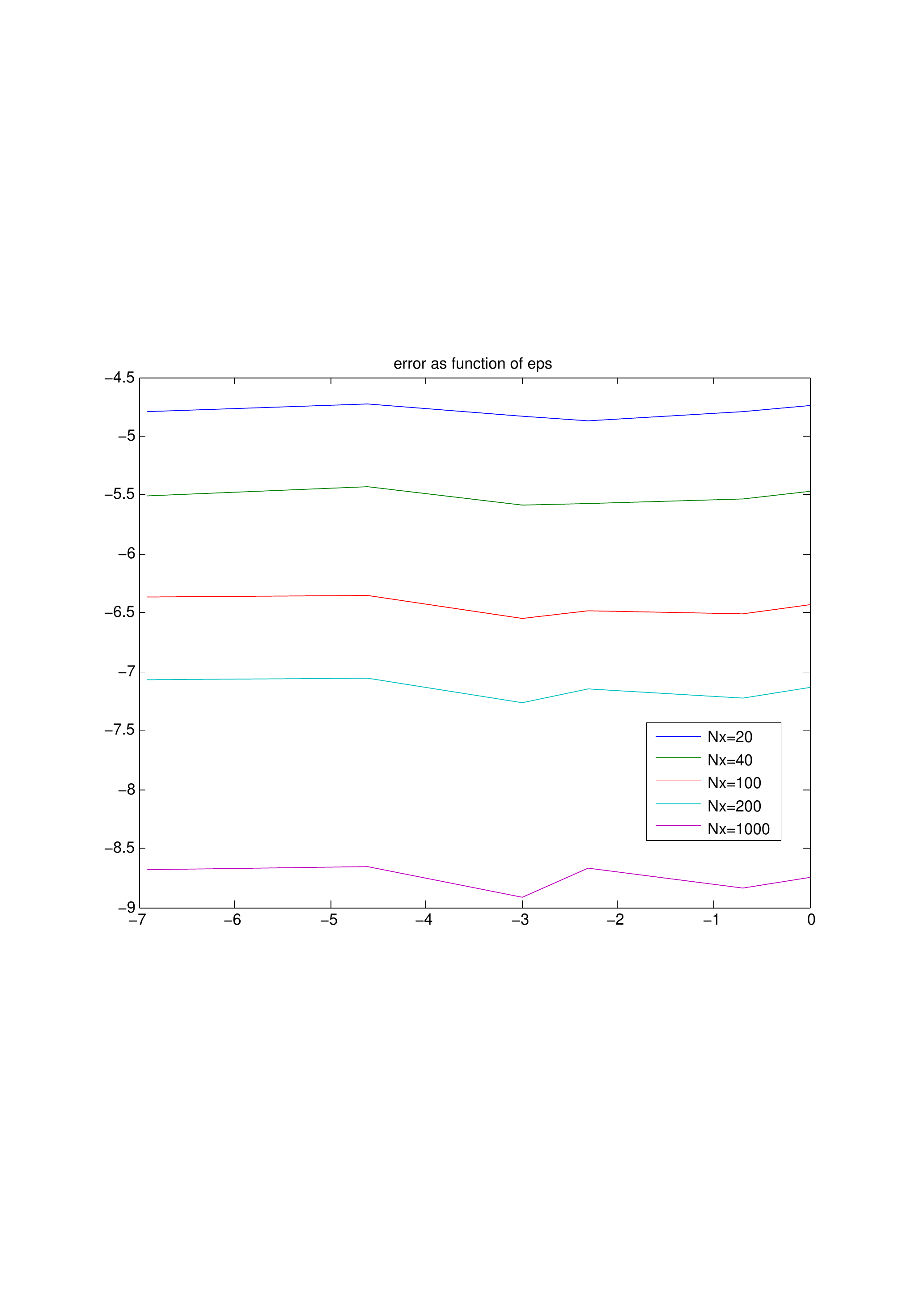}\\
\end{tabular}
\end{center}
\caption{Plot of the $\ell^\infty$ error for the new method {\it with} corrected  initial condition
and exact computation for $S$. Left: error ($\log$-$\log$ scale) as a function of $N_{ts}$
($N_{ts}= 20, 40, 100, 200, 1000$)
for different values of $\eps$ ($\eps=1, \dots, 10^{-3}$). Right: error ($\log$-$\log$ scale) as a function of $\eps$
for different $N_{ts}$.}
\label{fig1apos}
\end{figure}

\begin{figure}
\vspace{-4cm}
\begin{center}
\begin{tabular}{ccll}
\includegraphics[width=0.6\linewidth]{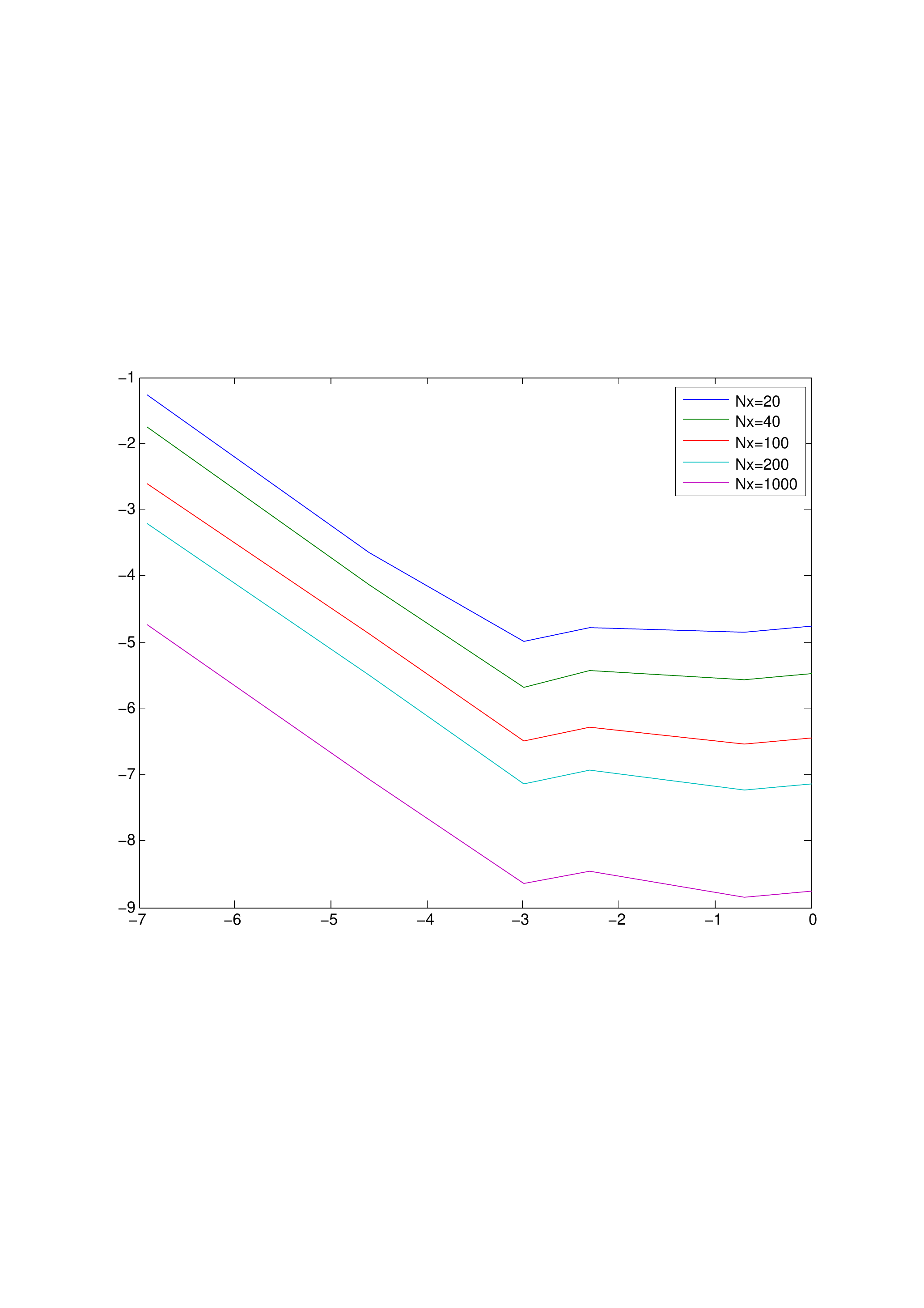}&
\hspace{-2cm}\vspace{-3cm}
\includegraphics[width=0.6\linewidth]{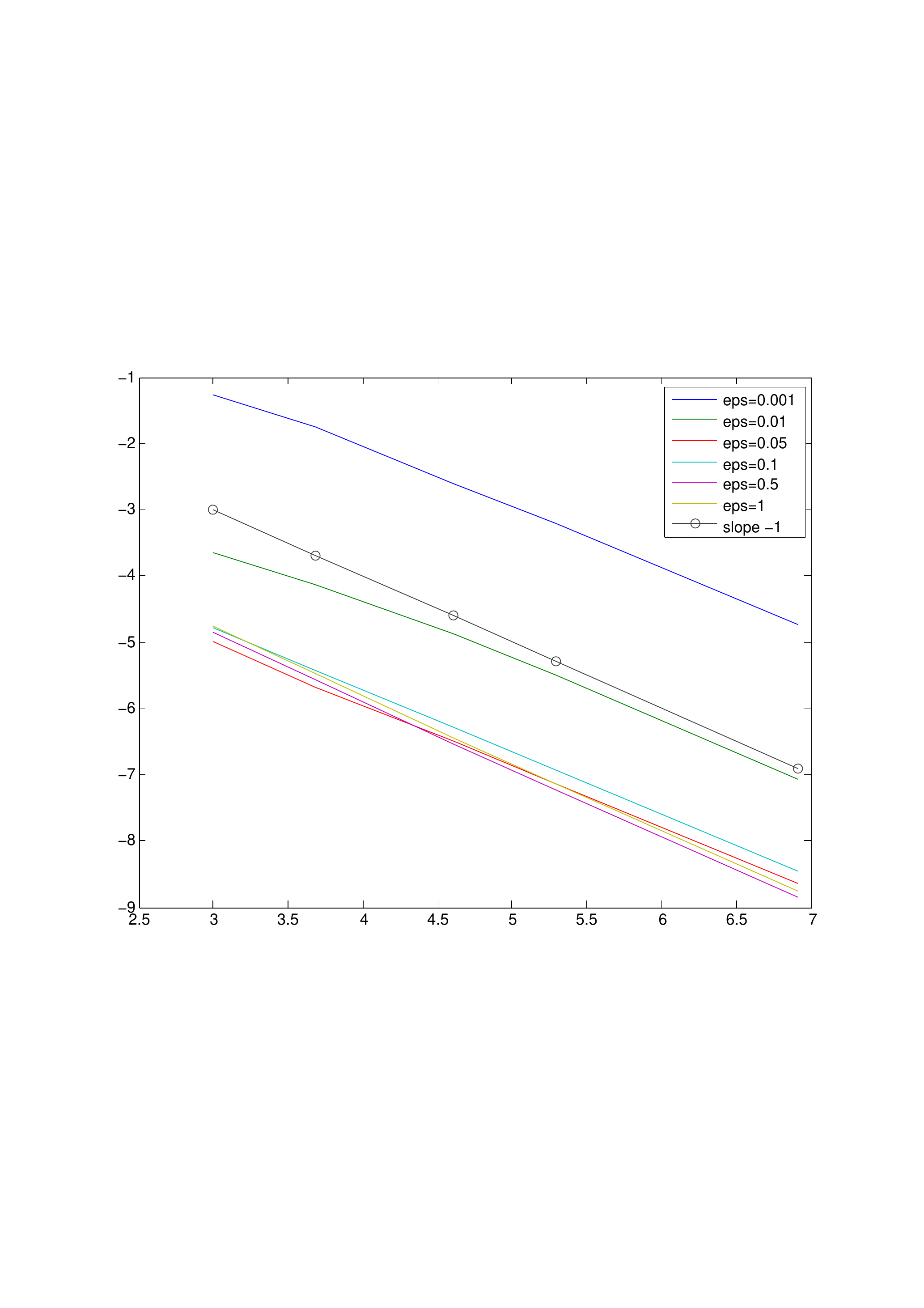}\\
\end{tabular}
\end{center}
\caption{Plot of the $\ell^\infty$ error for the new method {\it with} corrected initial condition
and numerical approximation for $S$. Left: error ($\log$-$\log$ scale) as a function of $N_{ts}$
($N_{ts}= 20, 40, 100, 200, 1000$)
for different values of $\eps$ ($\eps=1, \dots, 10^{-3}$). Right: error ($\log$-$\log$ scale) as a function of $\eps$
for different $N_{ts}$.}
\label{fig2apos}
\end{figure}

\begin{figure}
\begin{center}
\begin{tabular}{ccll}
\includegraphics[width=0.6\linewidth]{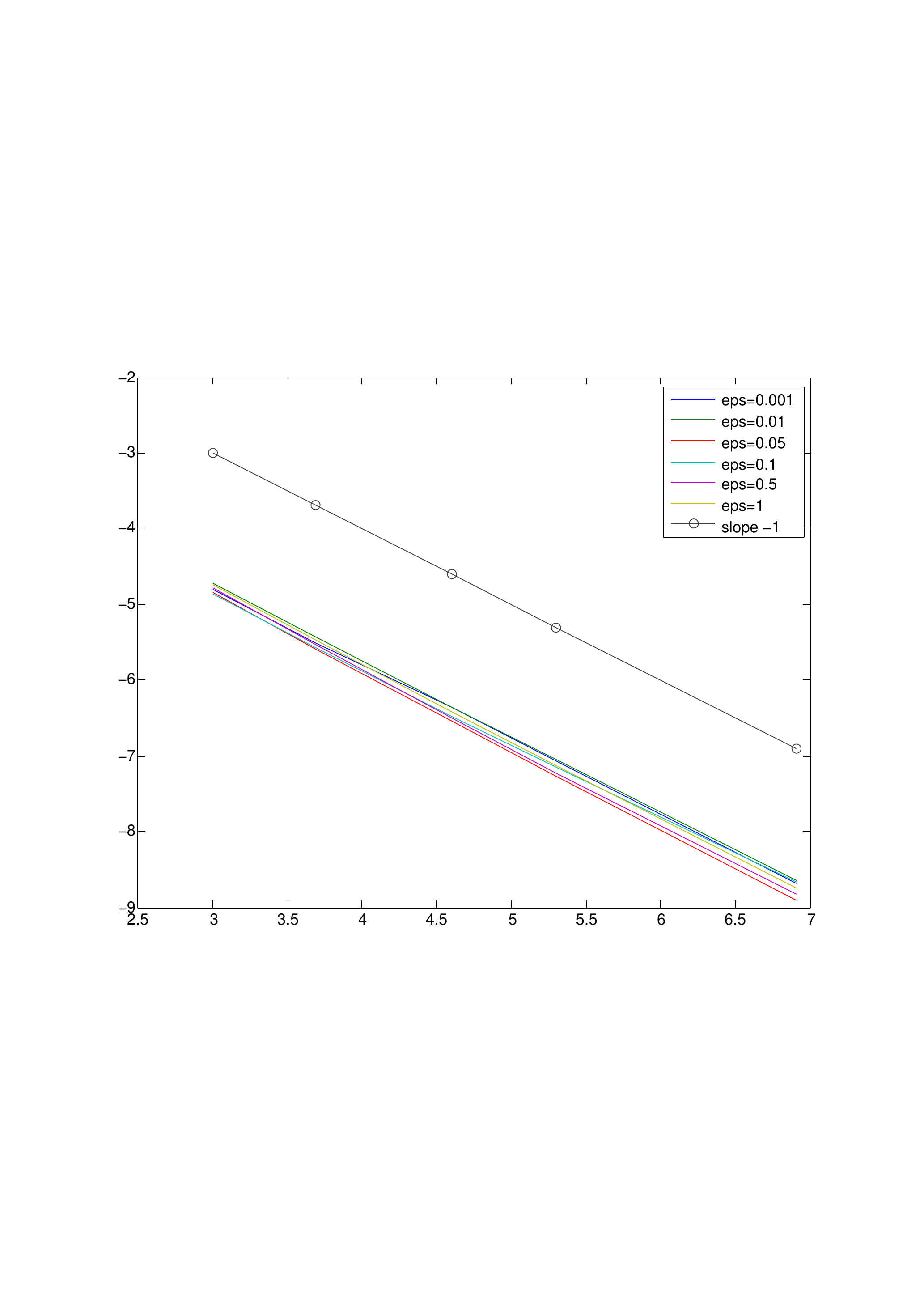}&
\hspace{-2cm}\vspace{-3cm}
\includegraphics[width=0.6\linewidth]{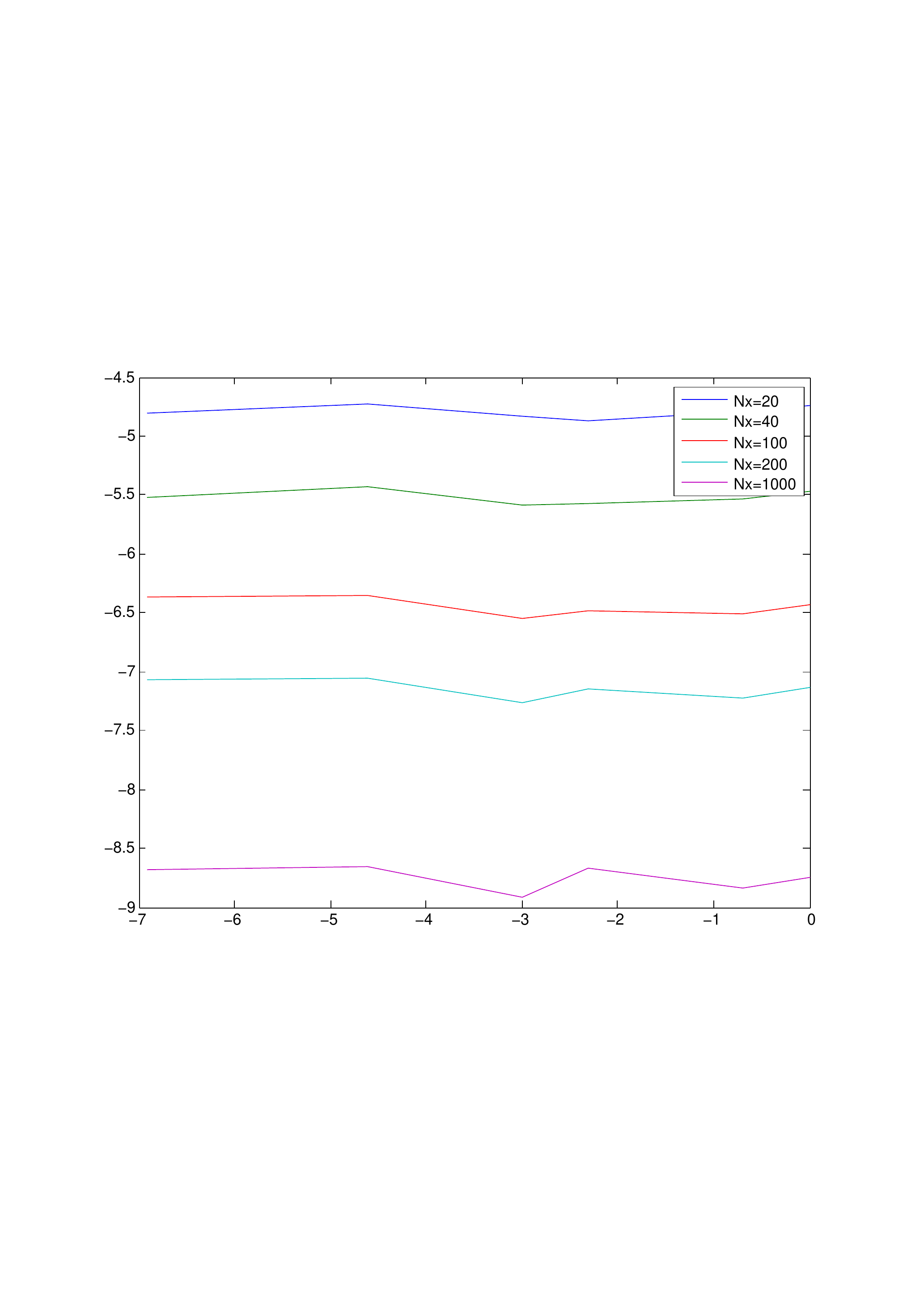}\\
\end{tabular}
\end{center}
\caption{Plot of the $\ell^\infty$ error for the new method {\it with} corrected  initial condition
and an improved numerical approximation for $S$ (pseudo-spectral in space and $4$th order Runge-Kutta).
Left: error ($\log$-$\log$ scale) as a function of $N_{ts}$
($N_{ts}= 20, 40, 100, 200, 1000$)
for different values of $\eps$ ($\eps=1, \dots, 10^{-3}$). Right: error ($\log$-$\log$ scale) as a function of $\eps$
for different $N_{ts}$.}
\label{fig2apos_S}
\end{figure}

\begin{figure}
\vspace{-4cm}
\begin{center}
\begin{tabular}{ccll}
\includegraphics[width=0.6\linewidth]{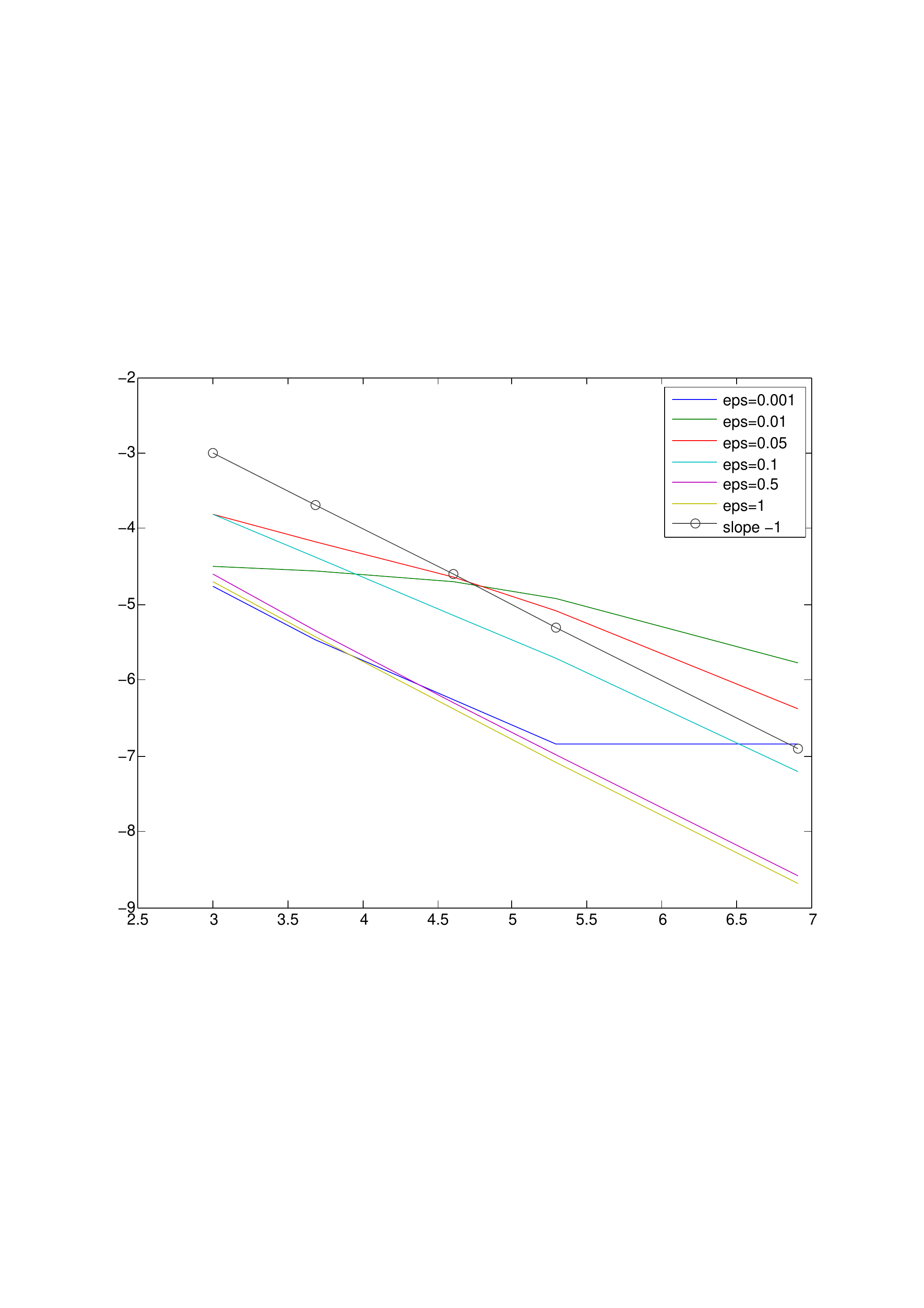}&
\hspace{-2cm}\vspace{-3cm}
\includegraphics[width=0.6\linewidth]{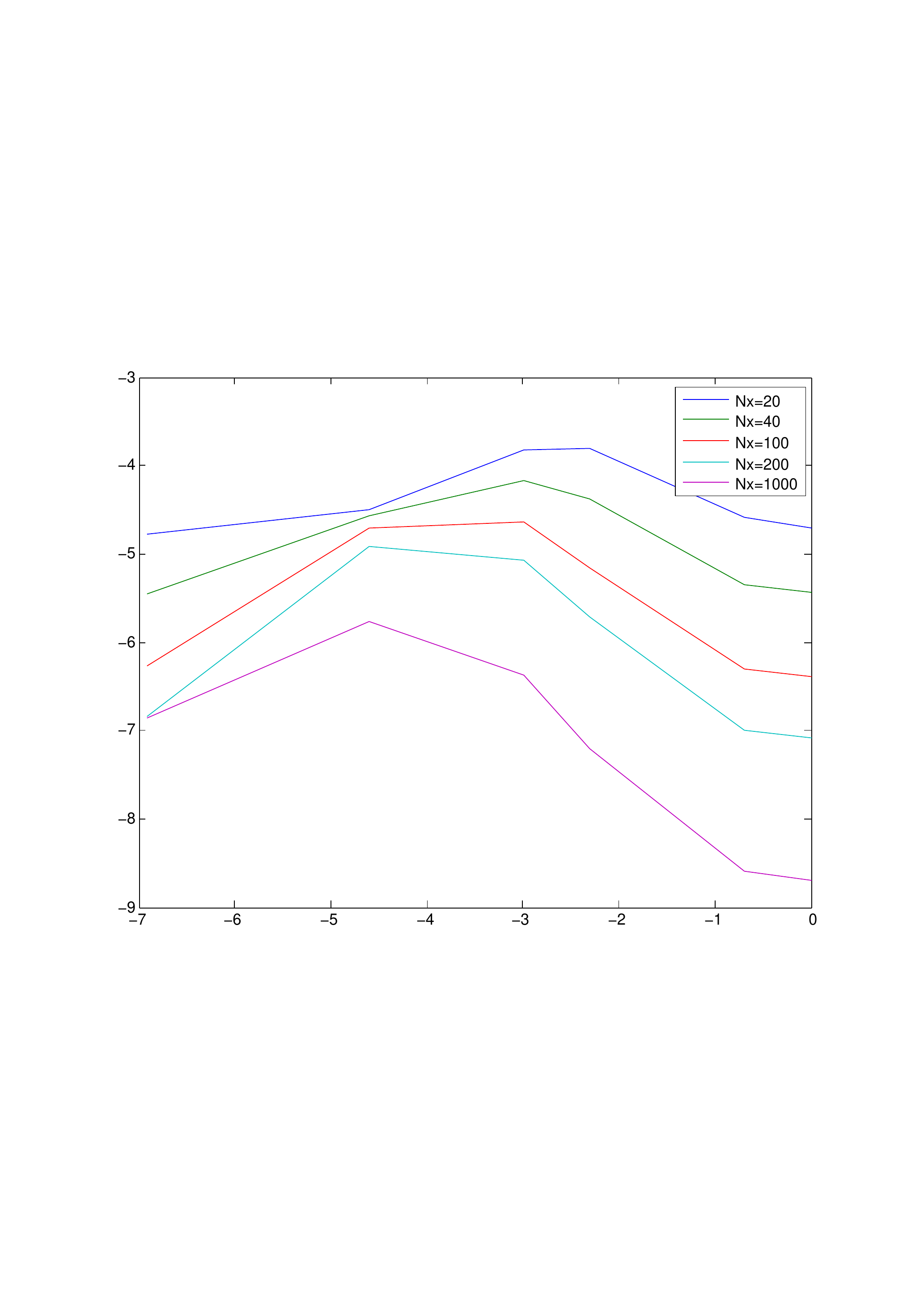}\\
\end{tabular}
\end{center}
\caption{Plot of the $\ell^\infty$ error for the new method {\it without} corrected initial condition
and exact computation for $S$. Left: error ($\log$-$\log$ scale) as a function of $N_{ts}$
($N_{ts}= 20, 40, 100, 200, 1000$)
for different values of $\eps$ ($\eps=1, \dots, 10^{-3}$). Right: error ($\log$-$\log$ scale) as a function of $\eps$
for different $N_{ts}$.}
\label{fig3apos}
\end{figure}

\begin{figure}
\begin{center}
\begin{tabular}{ccll}
\includegraphics[width=0.6\linewidth]{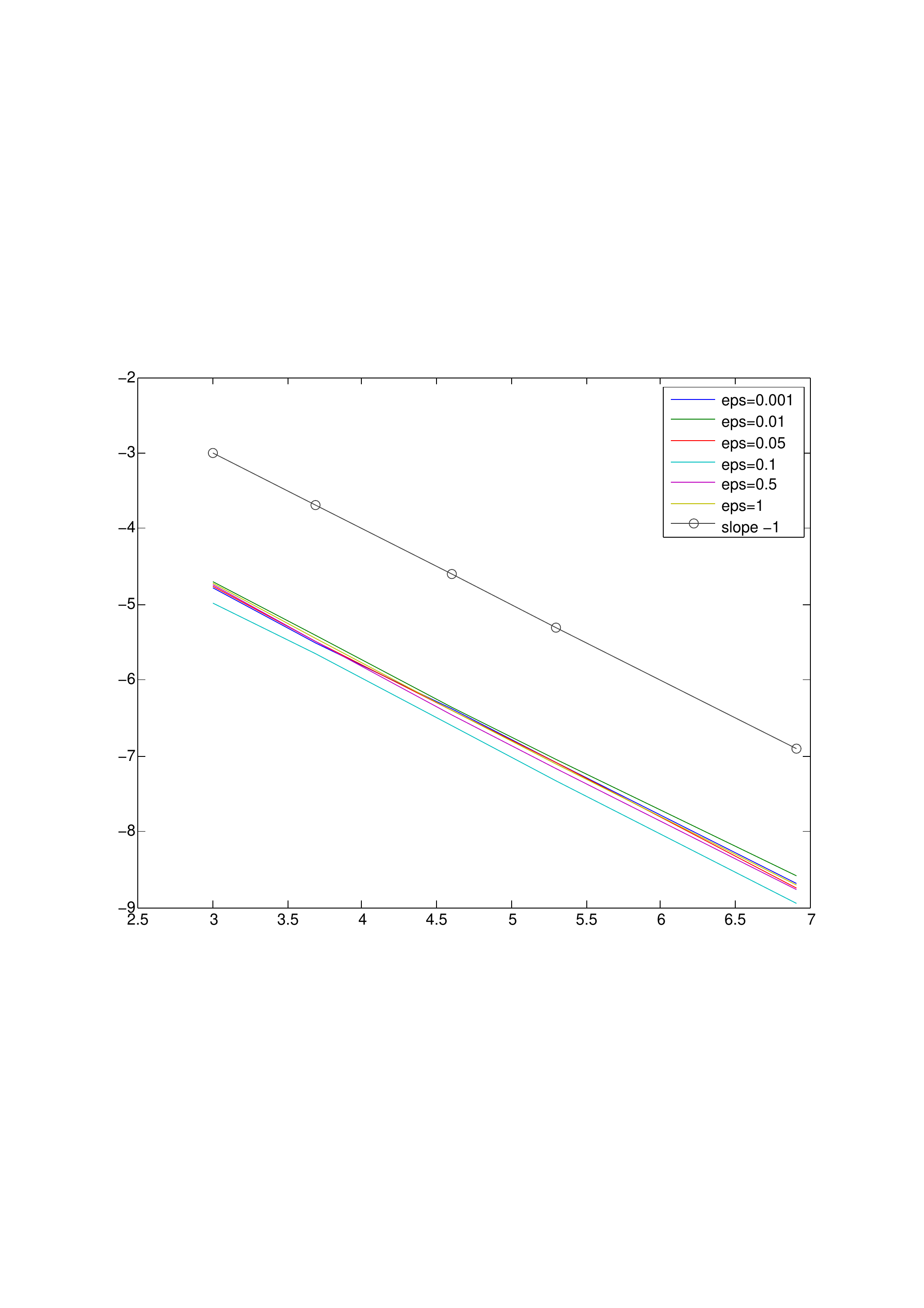}&
\hspace{-2cm}\vspace{-3cm}
\includegraphics[width=0.6\linewidth]{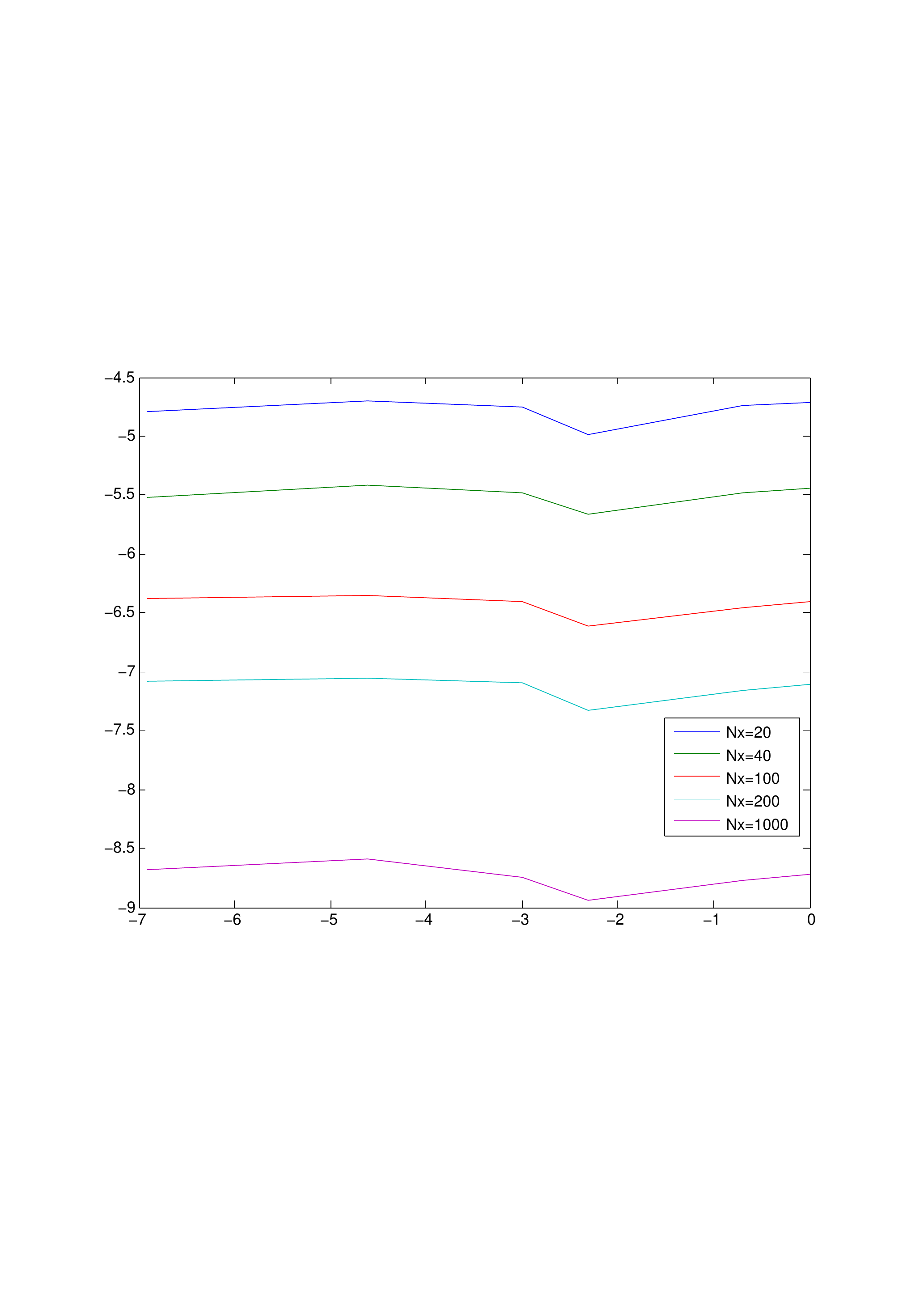}\\
\end{tabular}
\end{center}
\caption{Plot of the $\ell^\infty$ error for the new method {\it with} corrected initial condition
and exact computation for $S$, in the case where $a$ vanishes ($a(x)=1+\cos(2x)$) at isolated points.
Left: error ($\log$-$\log$ scale) as a function of $N_{ts}$
($N_{ts}= 20, 40, 100, 200, 1000$)
for different values of $\eps$ ($\eps=1, \dots, 10^{-3}$). Right: error ($\log$-$\log$ scale) as a function of $\eps$
for different $N_{ts}$. }
\label{fig1}
\end{figure}

\begin{figure}
\vspace{-4cm}
\centering
\includegraphics[scale=0.4]{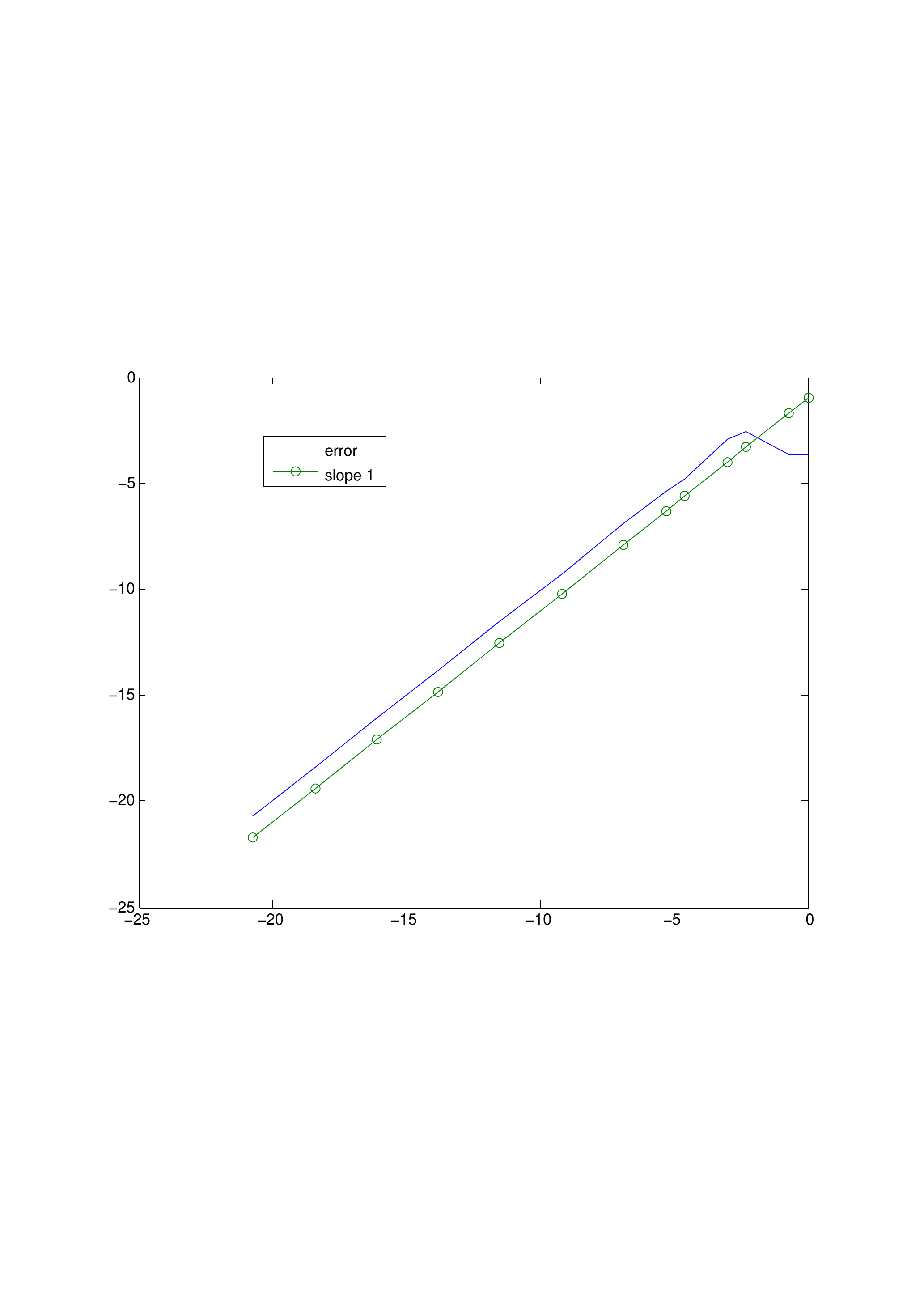}
\vspace{-3cm}
\caption{Plot of the $\ell^\infty$ error ($\log$-$\log$ scale) between the solution of the asymptotic model (\ref{asymptotic_model}) and the one obtained by the new method, as
function of $\eps$. }
\label{fig4}
\end{figure}

\begin{figure}
\begin{center}
\begin{tabular}{ccll}
\includegraphics[width=0.6\linewidth]{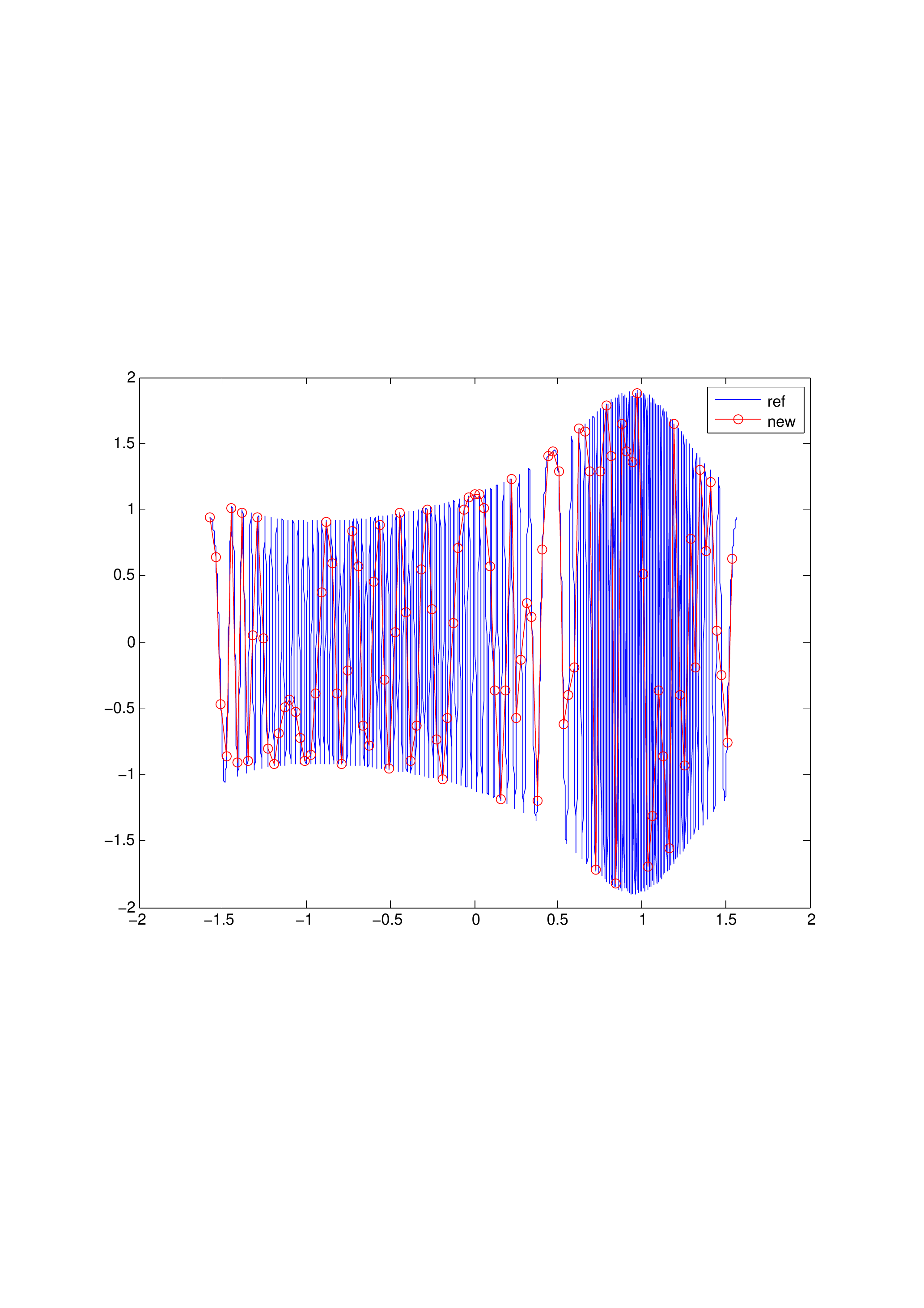}&
\hspace{-2cm}\vspace{-3cm}
\includegraphics[width=0.6\linewidth]{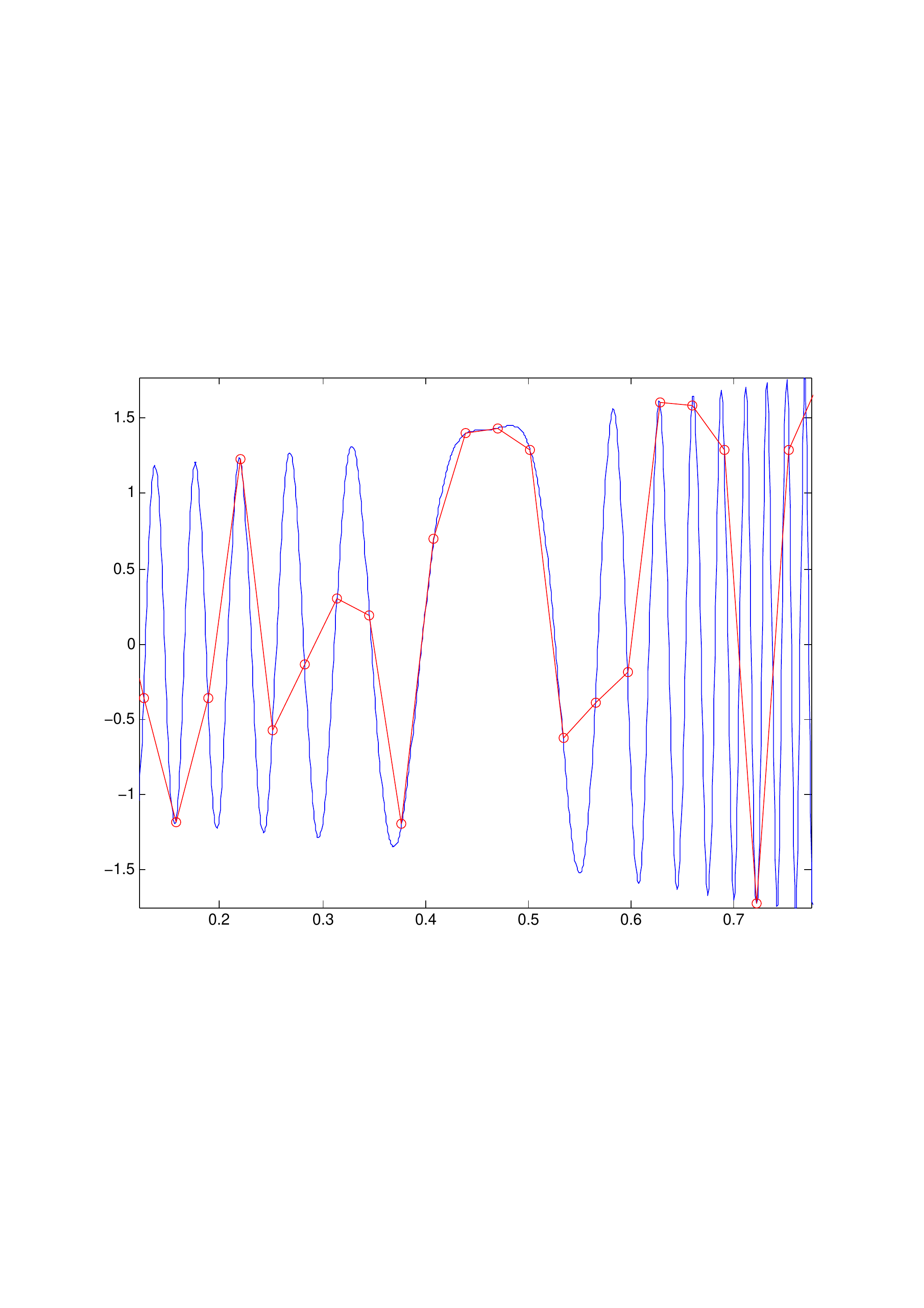}\\
\end{tabular}
\end{center}
\caption{Comparison between a reference solution and the  solution of the new method
({\it with} initial correction and exact computation for $S$),
for $\eps=5\cdot 10^{-3}$, $t_f=1$.
Left: space dependence of the real part of the unknown.
The right part is a zoom of the left part.}
\label{fig5}
\end{figure}

\begin{figure}
\vspace{-4cm}
\begin{center}
\begin{tabular}{ccll}
\includegraphics[width=0.6\linewidth]{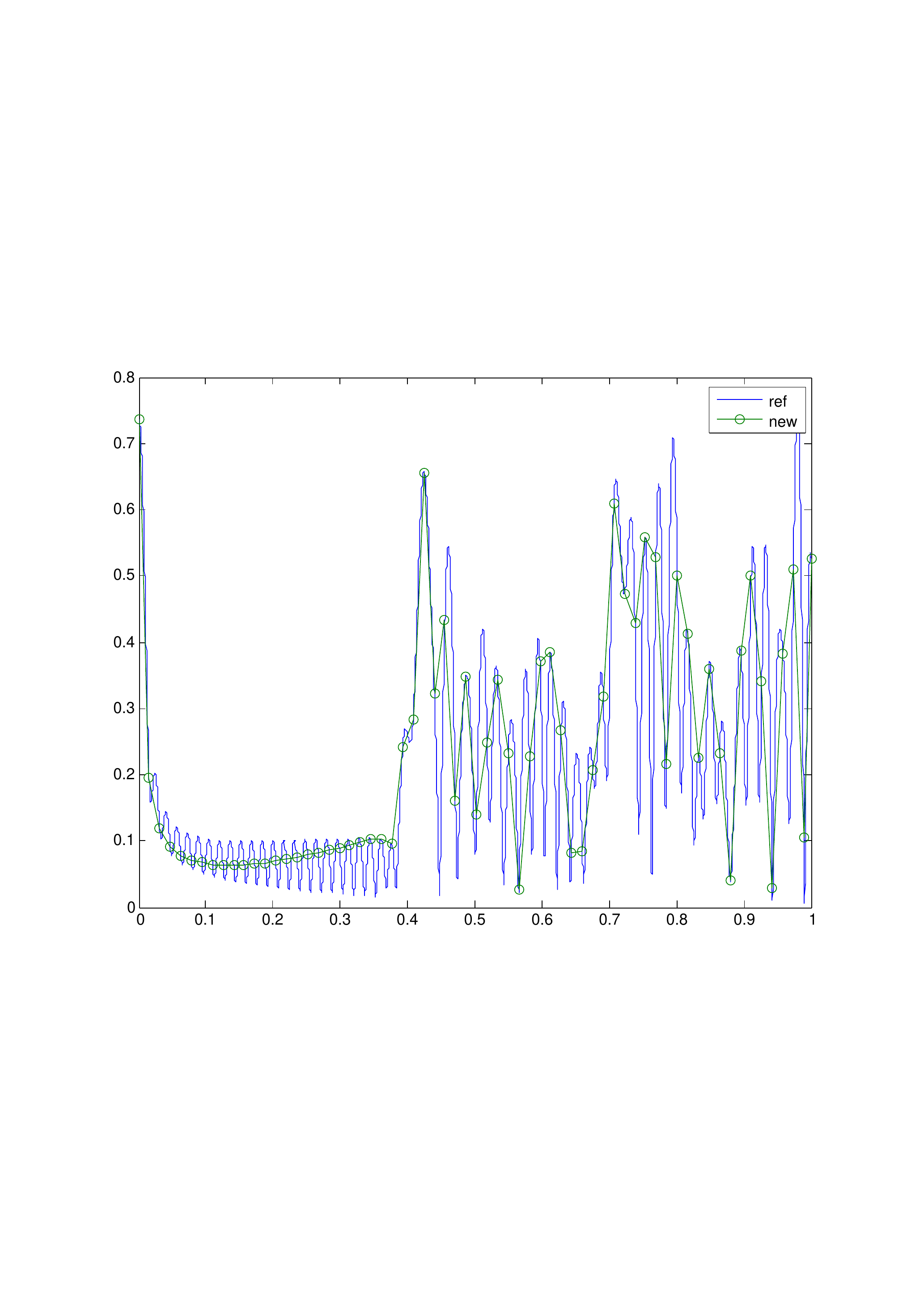}&
\hspace{-2cm}\vspace{-3cm}
\includegraphics[width=0.6\linewidth]{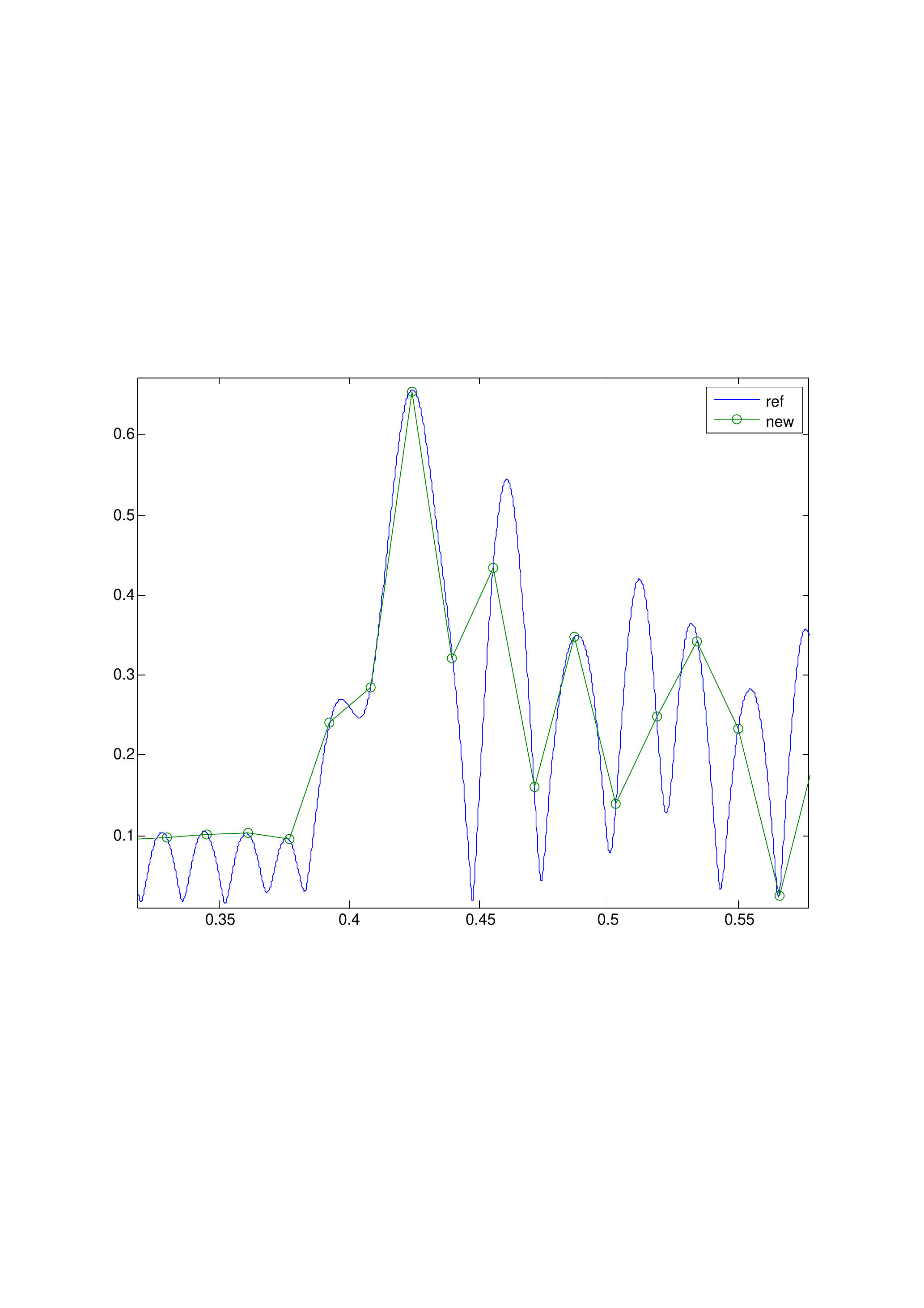}\\
\end{tabular}
\end{center}
\caption{Time history of ${\cal R}$.
Comparison between a reference solution and the  solution of the new method
({\it with} initial correction and exact computation for $S$),
for $\eps=5\cdot 10^{-3}$, $t_f=1$.
The right part is a zoom of the left part.}
\label{fig6}
\end{figure}

\newpage


\section{Extension to a class of PDE systems}
In this section, we focus on systems of equations and consider the  case where  $u(t, x)\in \mathbb{C}^2$
satisfies a hyperbolic system of the following form (with $t\geq 0, x\in \mathbb{R}$)
\begin{equation}
\label{eq_syst}
\partial_t u + A(x)\partial_x u + R(u) = \frac{i E(t, x)}{\varepsilon} Du + Cu, \;\; \  u(t=0, x)=u_0(x),
\end{equation}
where $R(u)=(R_1(u), R_2(u))\in \mathbb{C}^2$ is a reaction term,  $E$ is a real scalar function,
$C$ is a $2\times 2$ constant matrix and
$$
 A (x)=
\left(
\begin{array}{ccll}
a_1(x) & 0\\
0 & a_2(x)
\end{array}
\right),
\quad
D=
\left(
\begin{array}{ccll}
0 & 0\\
0 & -1
\end{array}
\right),
\quad
C=
\left(
\begin{array}{ccll}
C_{11} & C_{12}\\
C_{21} & C_{22}
\end{array}
\right).
$$
This model is a simplified version of a more physical model to be studied  in section 3.5.

\subsection{GO versus NGO}

 We first show that the GO approach  does not work for systems  like \eqref{eq_syst},
 even in the linear case $R=0$ and a non-oscillatory initial data $u(0, x)=u_0(x)$.
This is due to the non-commutativeness of the matrices $C$ and $D$ in general. Indeed, let
$u_k(t, x)=\alpha_k(t, x)e^{iS_k(t, x)/\varepsilon}, k=1, 2$. Inserting this ansatz in \eqref{eq_syst}, one gets
\begin{eqnarray*}
\partial_t \alpha_1 + a_1\partial_x \alpha_1 + \frac{i}{\varepsilon} [\partial_t S_1 + a_1\partial_x S_1]\alpha_1
\!\!&\!\!=\!\!&\!\! C_{11}\alpha_1 +C_{12}\alpha_2 e^{i (S_2-S_1)/\varepsilon}\,,\nonumber\\
\partial_t \alpha_2 + a_2\partial_x \alpha_2 + \frac{i}{\varepsilon} [\partial_t S_2 + a_2\partial_x S_2]\alpha_2
\!\!&\!\!=\!\!&\!\! -\frac{iE}{\varepsilon}\alpha_2 + C_{21}\alpha_1  e^{i (S_1-S_2)/\varepsilon}+C_{21}\alpha_2\,.
\end{eqnarray*}
Set
\begin{eqnarray*}
\partial_t S_1 + a_1\partial_x S_1 = 0, && S_1(0, x)=0,\nonumber\\
\partial_t S_2 + a_2\partial_x S_2 = -E, && S_2(0, x)=0,
\end{eqnarray*}
thus $S_1(t, x)\equiv 0$, while  $\alpha=(\alpha_1, \alpha_2)^T$ is governed by
$$
\partial_t \alpha + A(x)\partial_x \alpha = B\alpha,
$$
with
$$
B=
\left(
\begin{array}{ccll}
C_{11} & C_{12}e^{i (S_2-S_1)/\varepsilon} \\
 C_{21}e^{i (S_1-S_2)/\varepsilon} & C_{22}
\end{array}
\right).
$$
Clearly a solution $\alpha$  of this equation will be highly-oscillatory for $\eps$ small, whereas  the GO ansatz assumes that $\alpha$ and $S$ are smooth.   Therefore the GO approach does not serve our goal.
This motivates the NGO approach.

To do so, we consider the augmented function $U(t, x, \tau)$
such that
\[U(t, x, S(t, x)/\varepsilon)=u(t, x).
\]
Then, $U=(U_1, U_2)$ satisfies
\begin{eqnarray*}
\partial_t U_1 +a_1\partial_x U_1+\frac{1}{\varepsilon}[\partial_t S + a_1\partial_x S]\partial_\tau U_1 + R_1(U_1, U_2)=C_{11}U_1+C_{12}U_2, \nonumber\\
\partial_t U_2 +a_2\partial_x U_2+\frac{1}{\varepsilon}[\partial_t S + a_2\partial_x S]\partial_\tau U_2 + R_2(U_1, U_2)=-\frac{iE}{\varepsilon}U_2 + C_{21}U_1+C_{22}U_2. \nonumber\\
\end{eqnarray*}
The equation for the phase $S$ writes
$$
\partial_t S+a_2\partial_x S = -E, \;\; S(0, x)=0,
$$
and the equations for $(U_1, U_2)$ become
\begin{eqnarray*}
\partial_t U_1 \!\!&\!\!+\!\!&\!\! a_1\partial_x U_1+\frac{1}{\varepsilon}[(a_1-a_2)\partial_x S-E]\partial_\tau U_1 + R_1(U_1, U_2)=C_{11}U_1+C_{12}U_2, \nonumber\\
\partial_t U_2 \!\!&\!\!+\!\!&\!\! a_2\partial_x U_2 + R_2(U_1, U_2)=-\frac{E}{\varepsilon}[\partial_\tau U_2 +iU_2] + C_{21}U_1+C_{22}U_2.
\end{eqnarray*}
Setting $V_2=e^{i\tau}U_2$, we finally obtain
\beq
\label{eqU1V2}
\left\{\begin{array}{l}
\ds \partial_t U_1+a_1\partial_x U_1 \!+ R_1(U_1, e^{-i\tau}V_2)-C_{11}U_1-C_{12}e^{-i\tau}V_2
=\frac{1}{\varepsilon}[E-(a_1-a_2)\partial_x S]\partial_\tau U_1, \\ \\
\ds \partial_t V_2 +a_2\partial_x V_2 + e^{i\tau} R_2(U_1, e^{-i\tau}V_2)-C_{21}e^{i\tau}U_1-C_{22}V_2=-\frac{E}{\varepsilon}\partial_\tau V_2.
\end{array}\right.
\eeq

\subsection{A suitable initial data for system \eqref{eqU1V2}}
\label{syst_initcond}
Equation (\ref{eqU1V2}) needs initial data $U_1(0,x,\tau)$ and $V_2(0,x,\tau)$.
This initial data $U(0,x,\tau)$ will be chosen such that
the two following conditions are satisfied:
\begin{itemize}
\item  $U(0,x,  0)= u_0(x)= (f_1^{in}(x), f_2^{in}(x))$.
\item  The solution to (\ref{eqU1V2}) is smooth
with respect to $\eps$: the successive derivatives in {\em time and space} (up to some order $p\geq 1$)
are bounded uniformly in $\eps$.
\end{itemize}
The approach is similar to the scalar case, and similar notations will be used in the following analysis.

We decompose the solutions $U_1$ and $V_2$ as $U_1=U_1^0+U_1^1$ and $V_2=V_2^0+V_2^1$, where
$ U_1^0=\Pi U_1$ and $V_2^0= \Pi V_2$.
Injecting the decomposition into (\ref{eqU1V2})  and applying $(I-\Pi)$ to (\ref{eqU1V2}), one gets
\beq
\label{ImPieqF1G2}
\left\{\begin{array}{l}\ds \partial_t U_1^1 +a_1 \partial_x U_1^1 +({\cal I}-\Pi)[R_1(U_1, e^{-i\tau}V_2)]-C_{11}U_1^1- C_{12}({\cal I}-\Pi)[e^{-i\tau} V_2] \\\\
\ds\hspace{5cm}= -\frac{1}{\varepsilon}\left[E+(a_1-a_2) \partial_x S\right]\partial_\tau U_1^1,
 \\  \\
\ds \partial_t V_2^1 +a_2 \partial_x V_2^1 + ({\cal I}-\Pi)[e^{i\tau}R_1(U_1, e^{-i\tau}V_2)]-C_{21}({\cal I}-\Pi)[e^{i\tau} U_1]-C_{22}V_2^1 \\ \\
\ds \hspace{5cm}= -\frac{E}{\varepsilon}\partial_\tau V_2^1.
\end{array}\right.
\eeq
Assuming for simplicity that $R_1=R_2=0$, this implies that
$$
U_1^1= \frac{\eps C_{12}}{E +(a_1-a_2) \partial_x S }{\cal L}^{-1} \left( e^{-i\tau}\right)V_2^0  + O(\eps^2),
$$
$$
V_2^1= - \frac{\eps C_{21}}{E}{\cal L}^{-1} \left( e^{i\tau}\right)U_1^0  + O(\eps^2),
$$
which explicitly gives (using ${\cal L}^{-1}(e^{\pm i\tau}) = \mp ie^{\pm i\tau}$)
$$
U_1^1= \frac{i\eps C_{12} e^{-i\tau}}{E +(a_1-a_2) \partial_x S }V_2^0  + O(\eps^2),   \;\;\;\;
V_2^1=  -\frac{i\eps C_{21} e^{i\tau}}{E}U_1^0  + O(\eps^2).
$$
This yields
$$
U_1(t,x,\tau)= U_1^0(t, x) + \frac{i\eps C_{12} e^{-i\tau}}{E(t, x) +(a_1(x)-a_2(x)) \partial_x S(t, x) }V_2^0(t, x)  + O(\eps^2),
$$
$$
U_2(t,x,\tau)= V_2^0(t, x) e^{-i\tau}- \frac{i\eps C_{21}}{E(t, x)}U_1^0(t, x)  + O(\eps^2).
$$
To find the suitable initial condition for $U_1$ and $V_2$, one uses
$U_k(t=0, x, 0)=f_k^{in}(x), k=1, 2$,
so that  one needs  to solve the following
system in $(U_1^0, V_2^0)(0, x)$
$$
\begin{array}{l}
\ds U_1^0(0, x)+ \frac{i\eps C_{12} }{E(0,x)  }V_2^0(0, x) = f_1^{in}(x), \;\;\;  V_2^0(0, x) -  \frac{i\eps C_{21} }{E(0,x)}U_1^0(0, x) = f_2^{in}(x).
\end{array}
$$
The solutions are
$$
U_1^0=
\frac{1}{E^2-\eps^2 C_{21}C_{12}}\left(
 E^2 f^{in}_1 - iC_{12} \eps E f^{in}_2\right)
 $$
 $$
V_2^0 = \frac{1}{E^2-\eps^2C_{21}C_{12}}\left( i\eps C_{21} E f^{in}_1 + E^2 f^{in}_2\right).
$$
Thus, the initial conditions with first order correction writes
\beq
\label{init_cond_syst}
\left\{\begin{array}{l}
\ds U_1(0,x,\tau)=  f^{in}_1 + \frac{i\eps E C_{12}}{E^2-\eps^2 C_{12}C_{21}} \left( e^{-i\tau}-1\right)f^{in}_2,\\\\
\ds U_2(0,x,\tau)= \frac{i\eps  E C_{21} }{E^2 - \eps^2 C_{12}C_{21}} \left( e^{-i\tau} -1\right)f^{in}_1
+e^{-i\tau} f^{in}_2.
\end{array}
\right.
\eeq

\subsection{A numerical scheme for the $2\times 2$ system \eqref{eqU1V2}}
 Denoting $U_1^n(x, \tau)\approx U_1(t^n, x, \tau)$
and $V_2^n(x, \tau)\approx V_2(t^n,x,\tau)$ the approximations of the solution to \eqref{eqU1V2}
which satisfy the following numerical (semi-discrete in time) scheme
\beq
\label{eqF1G2-num}
\left\{\begin{array}{l}\ds \frac{U_1^{n+1} - U_1^{n}}{\Delta t}+a_1 \partial_x U_1^n -C_{11}U_1^n
-C_{12} e^{-i\tau} V_2^n \\\\
\ds \hspace{5cm}= -\frac{1}{\varepsilon}\left[E^n+(a_1-a_2) \partial_x S^n\right]\partial_\tau U_1^{n+1},
 \\  \\
\ds  \frac{V_2^{n+1} - V_2^{n}}{\Delta t}+a_2\partial_x V_2^n -C_{21}e^{i\tau}U_1^n-C_{22}V_2^n = -\frac{E^n}{\varepsilon}\partial_\tau V_2^{n+1},
\end{array}\right.
\eeq
whereas for the phase $S$, we use
$$
\frac{S^{n+1}-S^n}{\Delta t} + a_2\partial_x S^n = -E^n.
$$
At initial time $n=0$, we use the corrected initial condition \eqref{init_cond_syst}.
For the space approximation, we use the psuedo-spectral scheme in the periodic variable $\tau$
and a first order upwind scheme for the transport terms in $x$ (high order methods will be used for the approximation
of $S$ (as discussed in the scalar case), as well as semi-Lagrangian method).
Then, from $(U_1^n(x,\tau), V_2^n(x, \tau), S^n(x))$, we can construct
an approximation of $u(t^n,x)=(u_1(t^n,x), u_2(t^n, x))$ solution to \eqref{eq_syst} through the relation
$$
u_1(t^n, x) = U_1^n(x,  \tau=S^n(x)/\eps), \;\;\; u_2(t^n, x) = e^{-iS^n(x)/\eps} V_2^n(x, \tau=S^n(x)/\eps),
$$
where the evaluation at $\tau=S^n(x)/\eps$ is performed by trigonometric interpolation since the solution are
periodic with respect to the $\tau$ variable.

\subsection{Numerical results}
This section is devoted to numerical illustration of the new approach for the case of $2$x$2$ systems.
We solve \eqref{eq_syst} with $a_1(x)=1, a_2(x)=4, R(u)=0, E(t, x)=3/2+\cos(x)$, and
$$
C=
\left(
\begin{array}{ccll}
0 & 1\\
-1 & 0
\end{array}
\right).
$$
We consider the following initial condition
$$
u(t=0, x)=\Big(1+\frac{1}{2}\cos(x)+i\sin(x), 1+\frac{1}{2}\cos(x)+i\sin(x)\Big), \;\; x\in [0, 2\pi].
$$
As in the scalar case, we compare the solution obtained by a direct method (time splitting with
exact (in time) integration of each substep) and by the new approach. The direct method uses
resolved parameters so its solution provides a reference which will be compared to the
solution of the new method. For this latter method, the following numerical parameters
are used: $\Delta x=2\pi/N_{ts}, \Delta t=\Delta x/(2\max(a_1, a_2)), N_\tau=64$, where $N_{ts}$ is the number
of (uniform) grid points in the spatial direction.

In the following figures, we are interested in the $\ell^\infty$ error in space
(for different values of $N_{ts}$) at the final time $t_f=0.1$,
between the new method and the reference solution,  for different values of $\eps$.


In Figure \ref{fig1syst}, the solution obtained with the new method
is computed with the corrected initial condition and with an exact solution for  the phase $S$.
We plot the $\ell^\infty$ error for different values of $\eps$ as a function of $N_{ts}$ (left part)
and the $\ell^\infty$ error as a function of $\eps$ for different $N_{ts}$ (right part). 
As in the scalar case, the uniform accuracy is observed: the order of accuracy is independent of
$\varepsilon$ and the error is constant with respect to $\varepsilon$.

In Figure \ref{fig2syst}, we study the influence of the numerical approximation of $S$ on the error.
We used for the approximation of $S$ a first order upwind scheme in space with a first order time integrator.
We plot the same diagnostics as before.
As in the scalar case, we observe a bad behavior when $\eps$ becomes small. Then, in Figure \ref{fig3syst},
we consider an improved numerical approximation of $S$ by using a pseudo-spectral method in
space with a $4$th-order Runge-Kutta time integrator.
We then observe that the uniform accuracy is recovered.

In Figure \ref{fig4syst}, an exact computation of $S$ is used but the initial data is not corrected.
Again, we plot the $\ell^\infty$ error. As expected the uniform accuracy is lost, in particular in the intermediate regime.

\begin{figure}
\begin{center}
\begin{tabular}{ccll}
\includegraphics[width=0.6\linewidth]{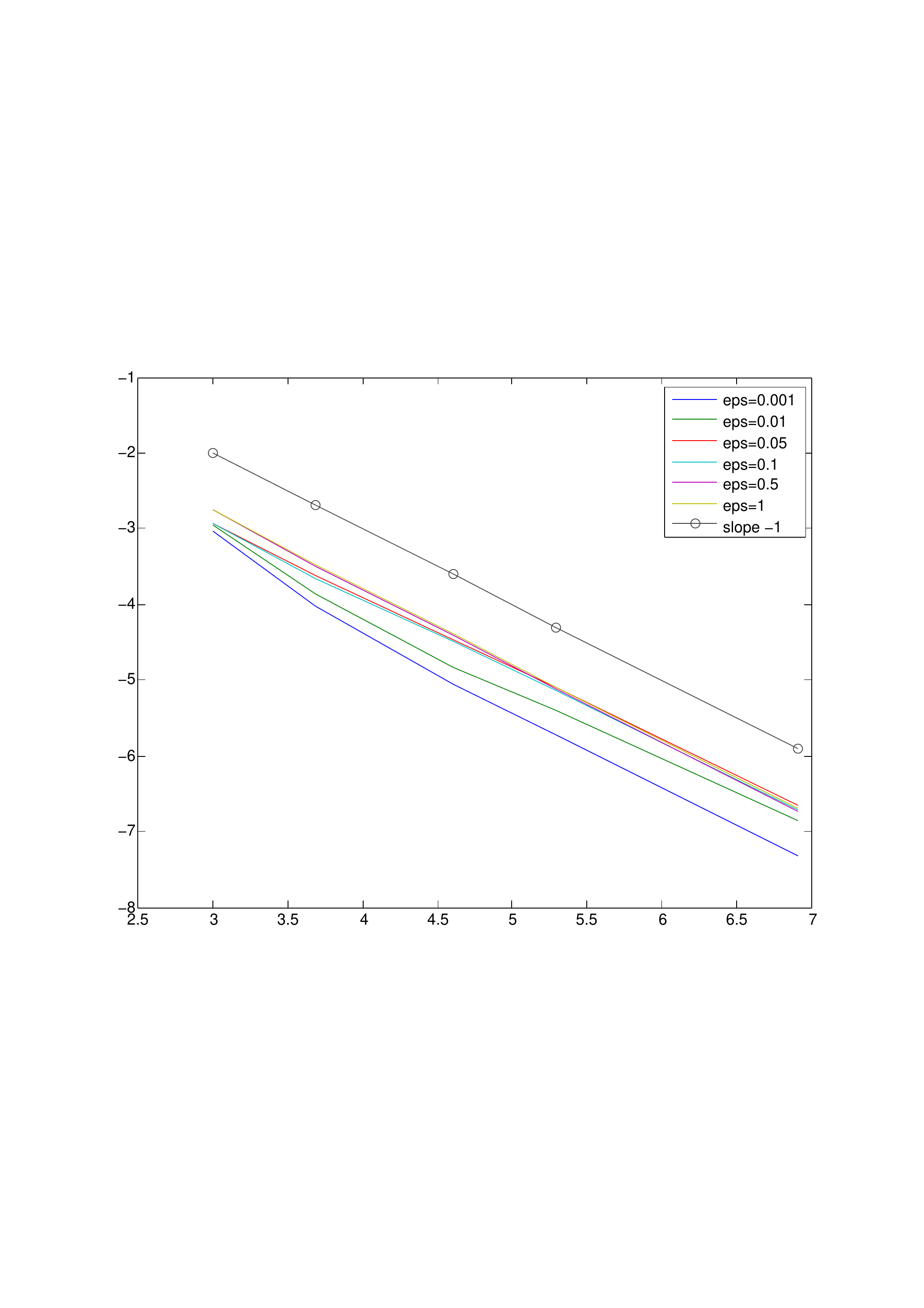}&
\hspace{-2cm}\vspace{-3cm}
\includegraphics[width=0.6\linewidth]{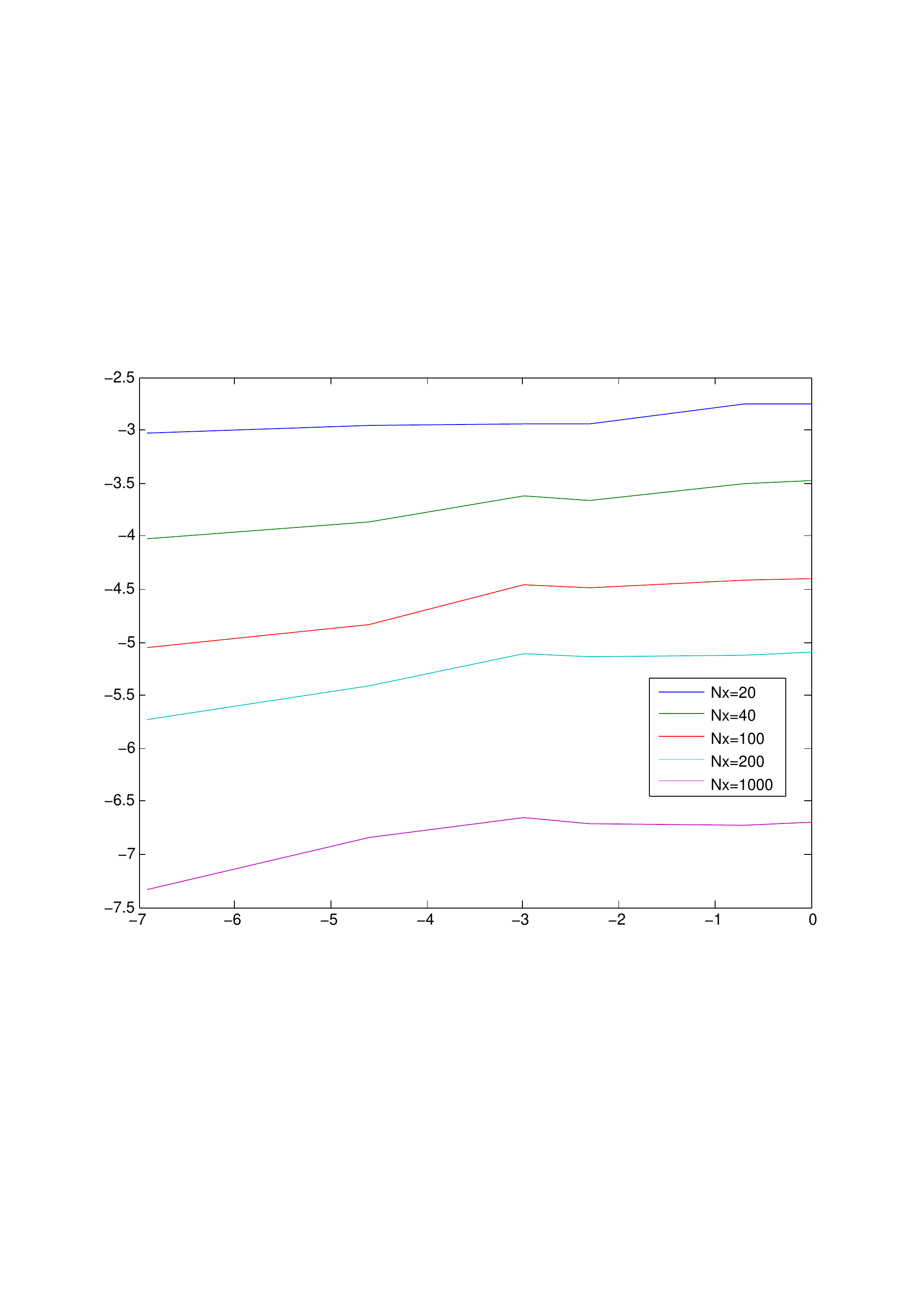}\\
\end{tabular}
\end{center}
\caption{Plot of the $\ell^\infty$ error for the new method {\it with} corrected  initial condition
and exact computation for $S$. Left: error ($\log$-$\log$ scale) as a function of $N_{ts}$
($N_{ts}= 20, 40, 100, 200, 1000$)
for different values of $\eps$ ($\eps=1, \dots, 10^{-3}$). Right: error ($\log$-$\log$ scale) as a function of $\eps$
for different $N_{ts}$.}
\label{fig1syst}
\end{figure}

\begin{figure}
\vspace{-4cm}
\begin{center}
\begin{tabular}{ccll}
\includegraphics[width=0.6\linewidth]{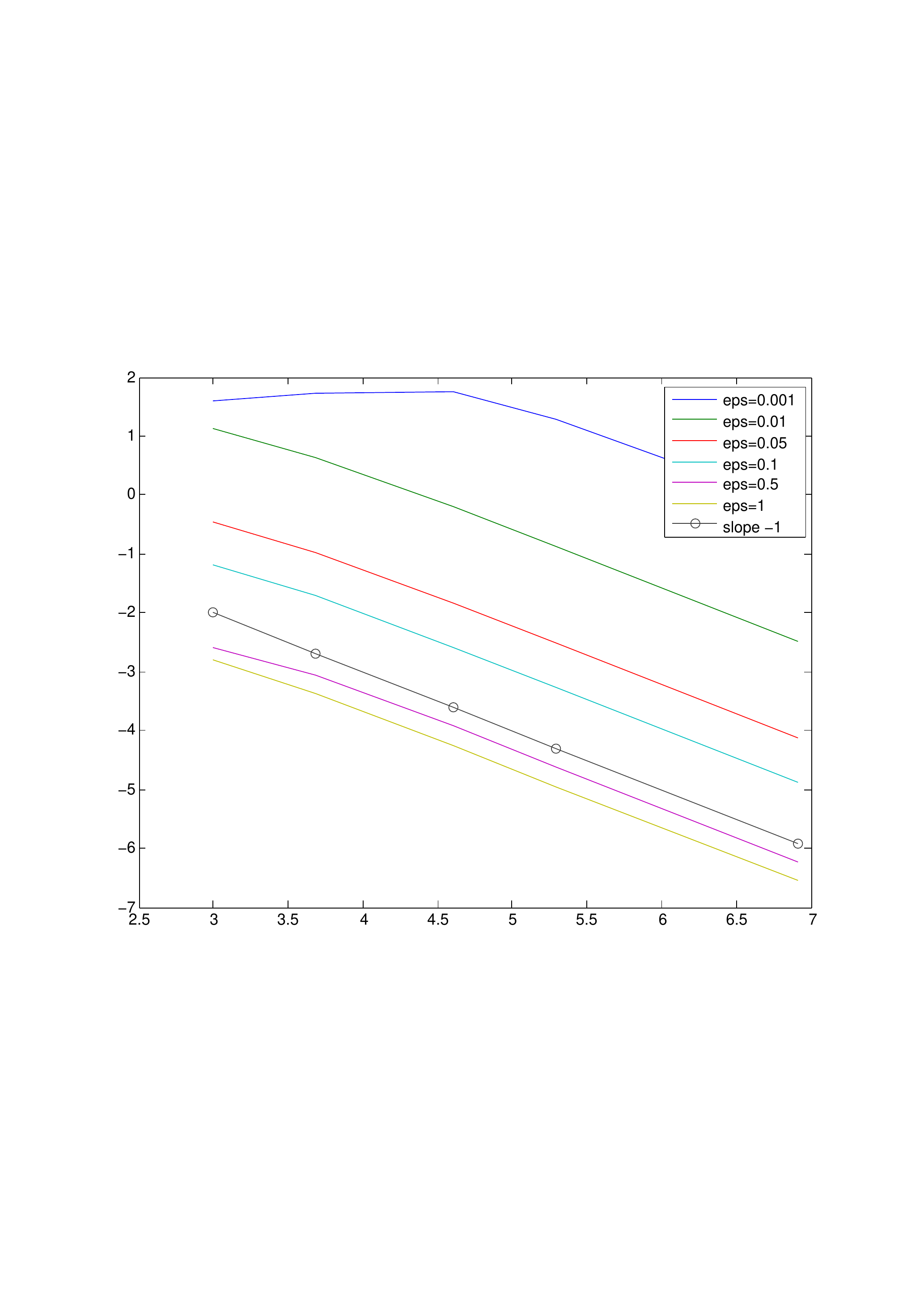}&
\hspace{-2cm}\vspace{-3cm}
\includegraphics[width=0.6\linewidth]{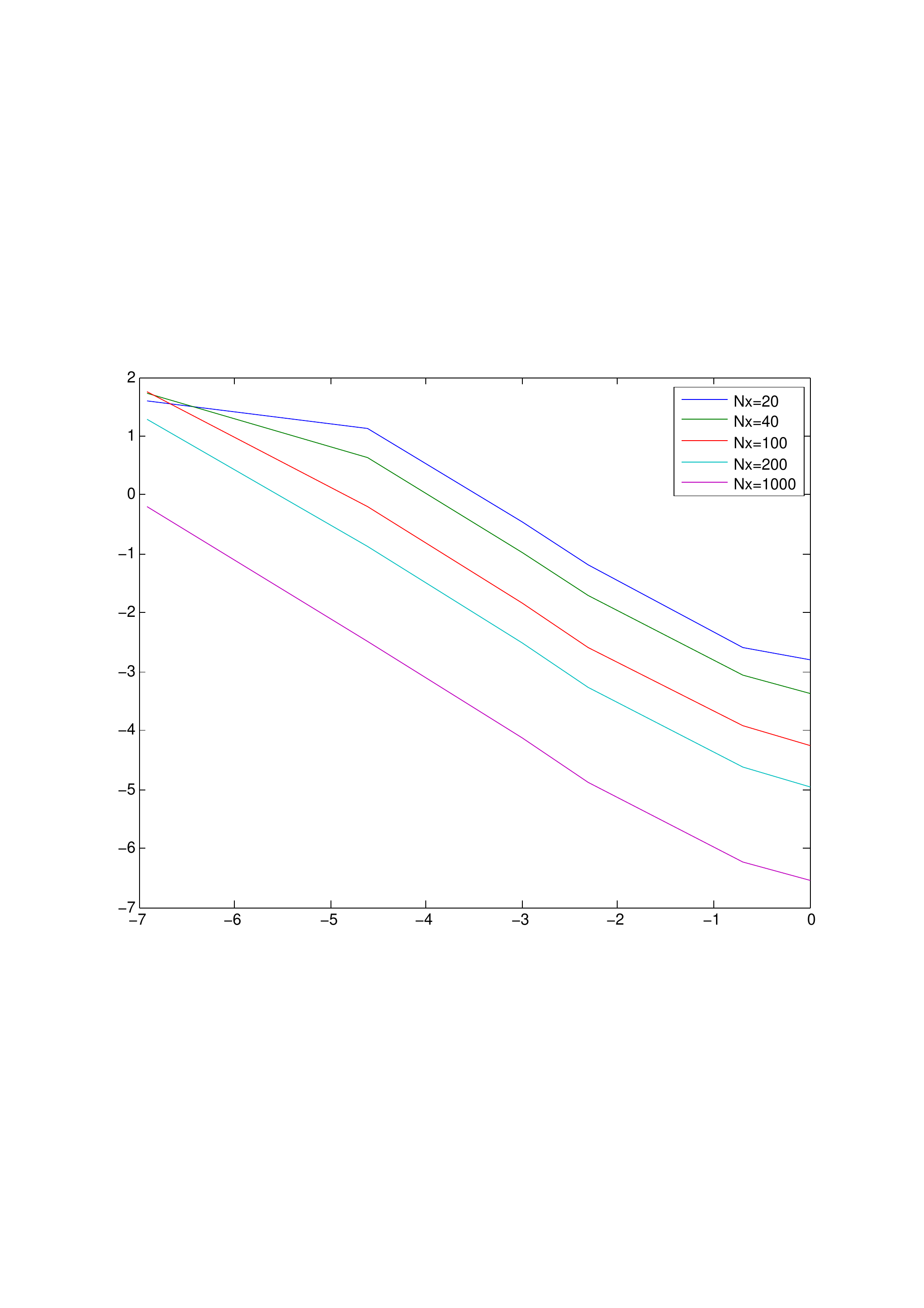}\\
\end{tabular}
\end{center}
\caption{Plot of the $\ell^\infty$ error for the new method {\it with} corrected initial condition
and numerical approximation for $S$. Left: error ($\log$-$\log$ scale) as a function of $N_{ts}$
($N_{ts}= 20, 40, 100, 200, 1000$)
for different values of $\eps$ ($\eps=1, \dots, 10^{-3}$). Right: error ($\log$-$\log$ scale) as a function of $\eps$
for different $N_{ts}$.}
\label{fig2syst}
\end{figure}

\begin{figure}
\begin{center}
\begin{tabular}{ccll}
\includegraphics[width=0.6\linewidth]{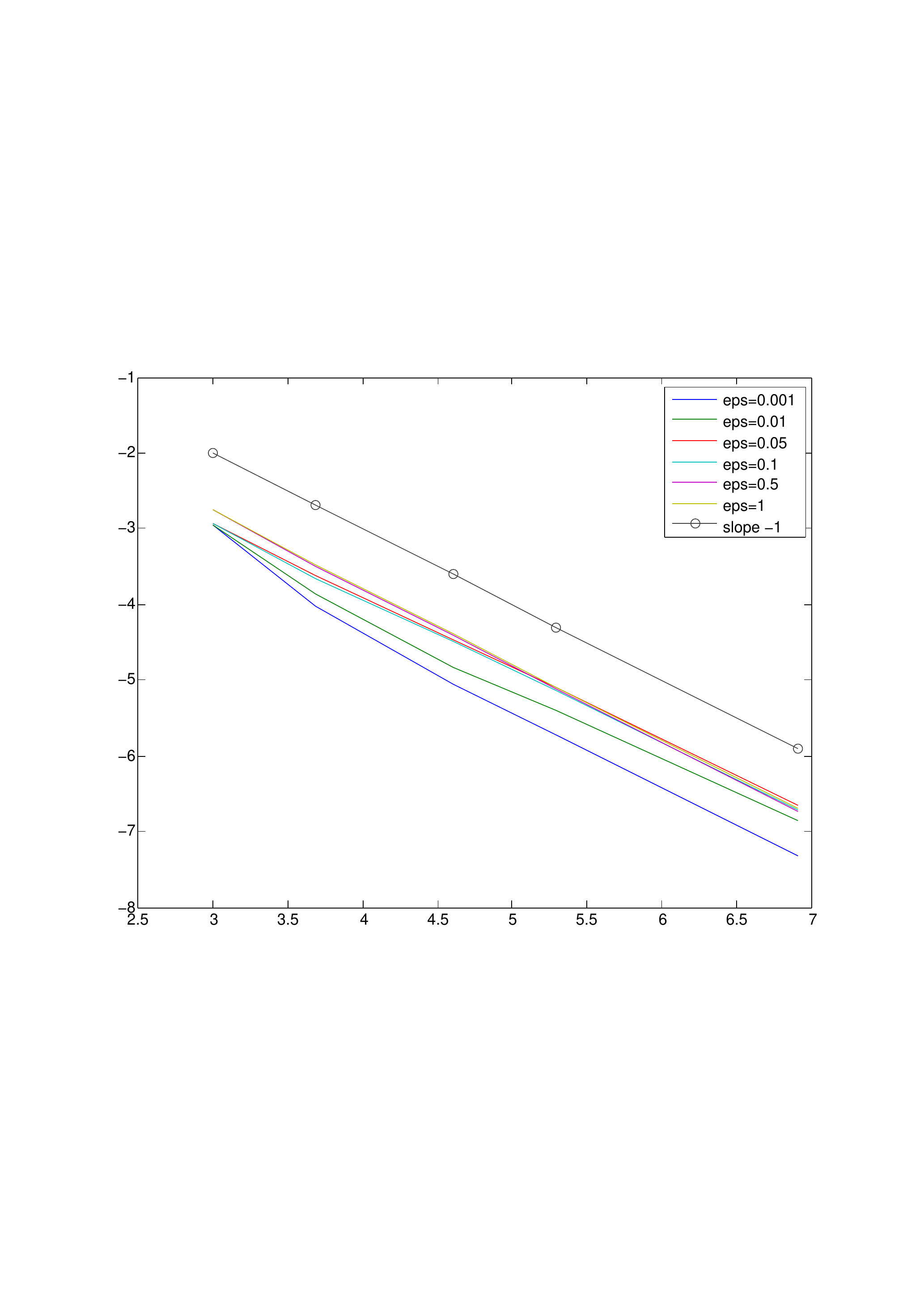}&
\hspace{-2cm}\vspace{-3cm}
\includegraphics[width=0.6\linewidth]{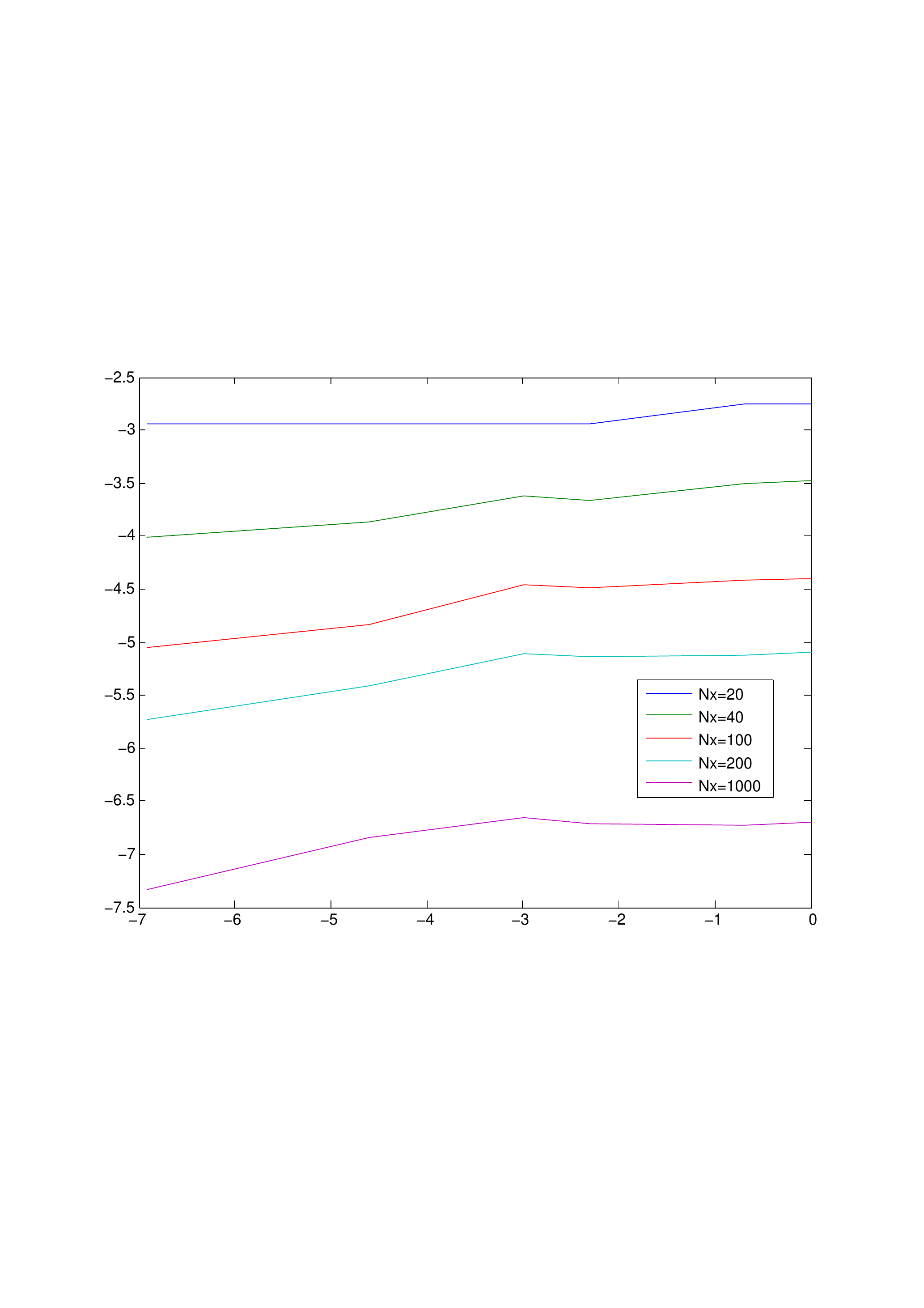}\\
\end{tabular}
\end{center}
\caption{Plot of the $\ell^\infty$ error for the new method {\it with} corrected initial condition
and an improved numerical approximation for $S$ (pseudo-spectral in space and $4$th order Runge-Kutta). Left: error ($\log$-$\log$ scale) as a function of $N_{ts}$
($N_{ts}= 20, 40, 100, 200, 1000$)
for different values of $\eps$ ($\eps=1, \dots, 10^{-3}$). Right: error ($\log$-$\log$ scale) as a function of $\eps$
for different $N_{ts}$.}
\label{fig3syst}
\end{figure}

\begin{figure}
\vspace{-4cm}
\begin{center}
\begin{tabular}{ccll}
\includegraphics[width=0.6\linewidth]{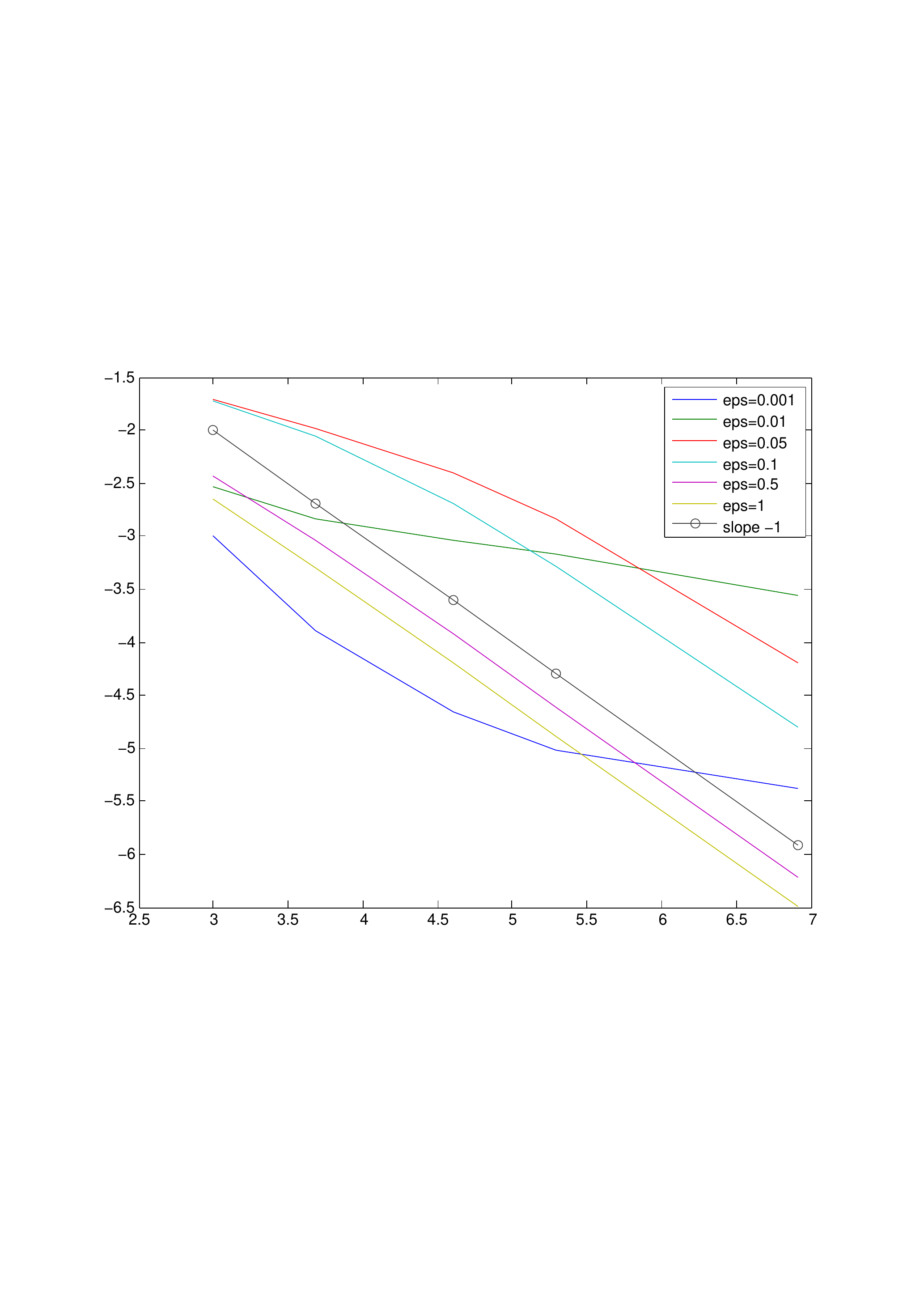}&
\hspace{-2cm}\vspace{-3cm}
\includegraphics[width=0.6\linewidth]{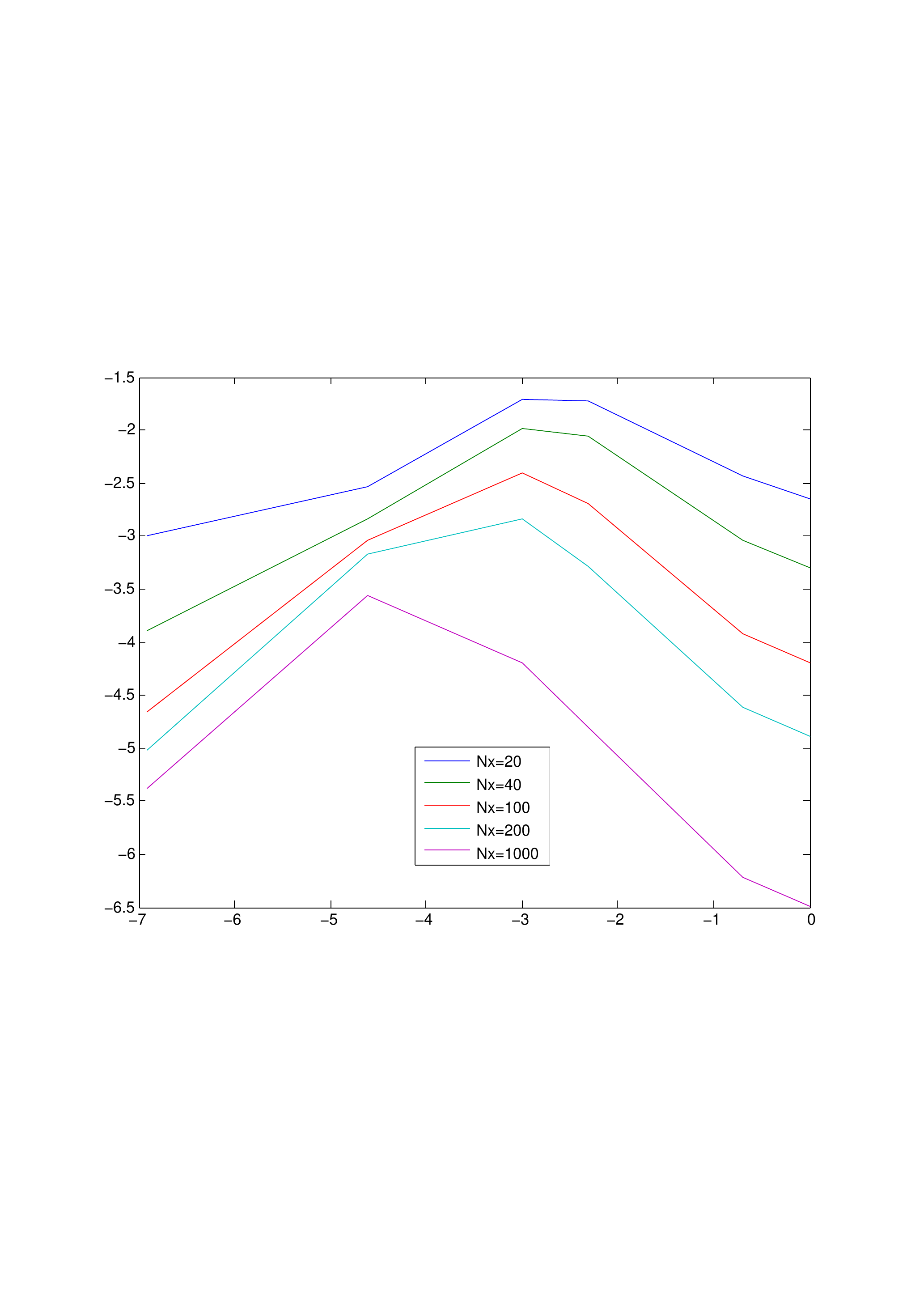}\\
\end{tabular}
\end{center}
\caption{Plot of the $\ell^\infty$ error for the new method {\it without} corrected initial condition
and an exact computation for $S$. Left: error ($\log$-$\log$ scale) as a function of $N_{ts}$
($N_{ts}= 20, 40, 100, 200, 1000$)
for different values of $\eps$ ($\eps=1, \dots, 10^{-3}$). Right: error ($\log$-$\log$ scale) as a function of $\eps$
for different $N_{ts}$.}
\label{fig4syst}
\end{figure}

Finally, in the following figures, we illustrate the performances of the new method
using the same data as before except the initial condition
$$
u(t=0, x)=\Big(1+\frac{1}{2}\cos(x), 1+\frac{1}{2}\cos(x)\Big), \;\; x\in [0, 2\pi].
$$
We compare a reference solution (computed with a direct method using resolved
numerical parameters $N_d=4000, \Delta t=10^{-4}$)
and the  solution of the new method at $t_f=1$, for $\eps=0.01$.
For the new method, we choose $N_\tau=8$, a well-prepared initial condition, an exact phase $S$
and different values of $N_{ts}$ are considered.
In Figure \ref{fig6syst}, we plot the real and imaginary part of the first component of the solution
as a function of space for $N_{ts}=20, 40, 100$ (and $\Delta t=2\pi/(2N_{ts}\max(a_1, a_2))$).
Even with a very coarse mesh, we observe that the new method is able to capture the high
space oscillations of the solution. In Figure \ref{fig7syst}, the real part of the second component
of the solution is displayed as a function of $x$, for  $N_{ts}=20, 40, 100$. On the right column
(which is a zoom of the left one), we see that the new solution almost coincides
with the reference one even if the spatial mesh $\Delta x\approx 0.31, 0.15, 0.06$ is large compared
to the size of the smallest oscillations (of order $\eps=0.01$).

 \begin{figure}
\begin{center}
\begin{tabular}{cclll}
\includegraphics[width=0.6\linewidth]{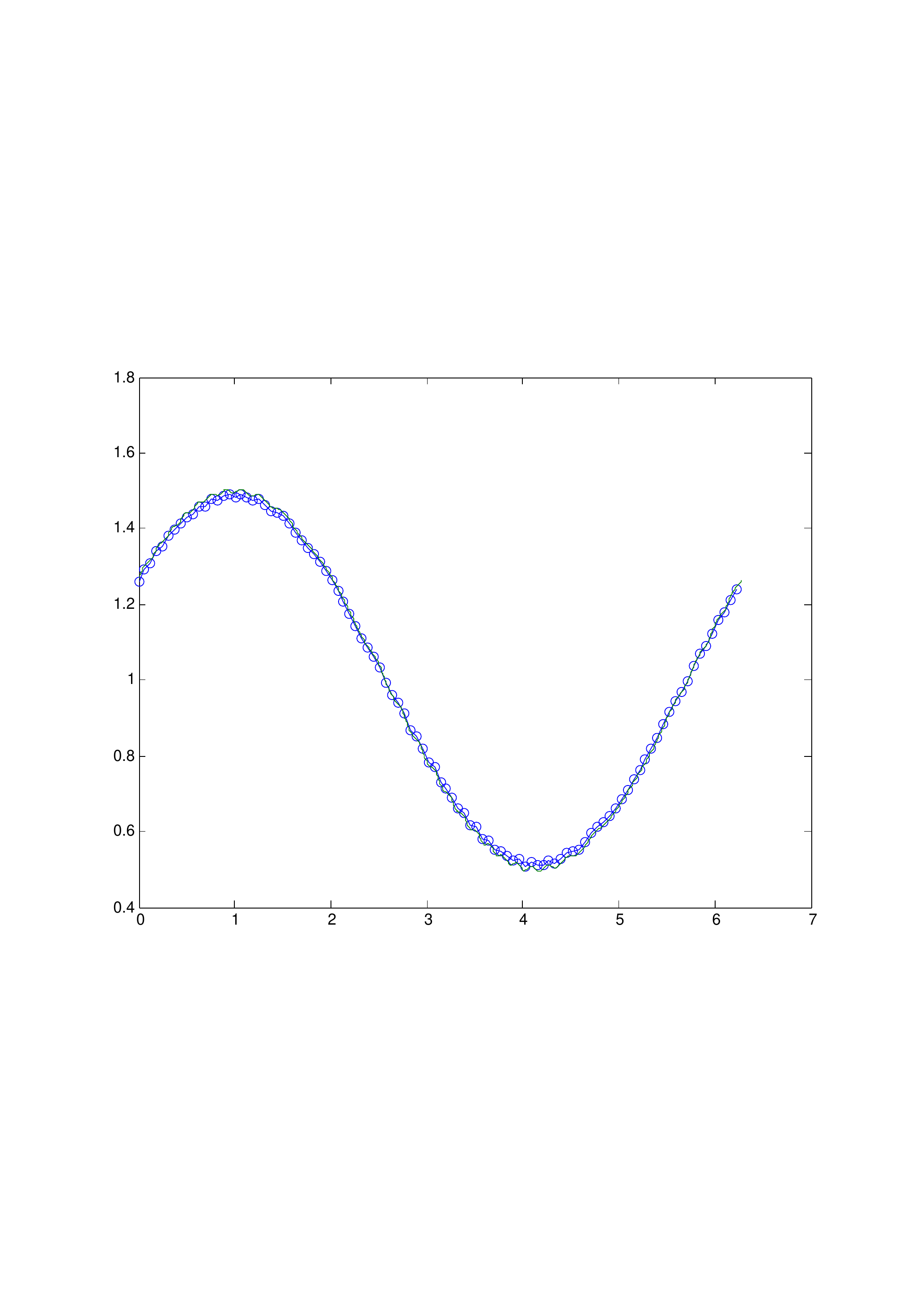}&
\hspace{-2cm}\vspace{-5.5cm}
\includegraphics[width=0.6\linewidth]{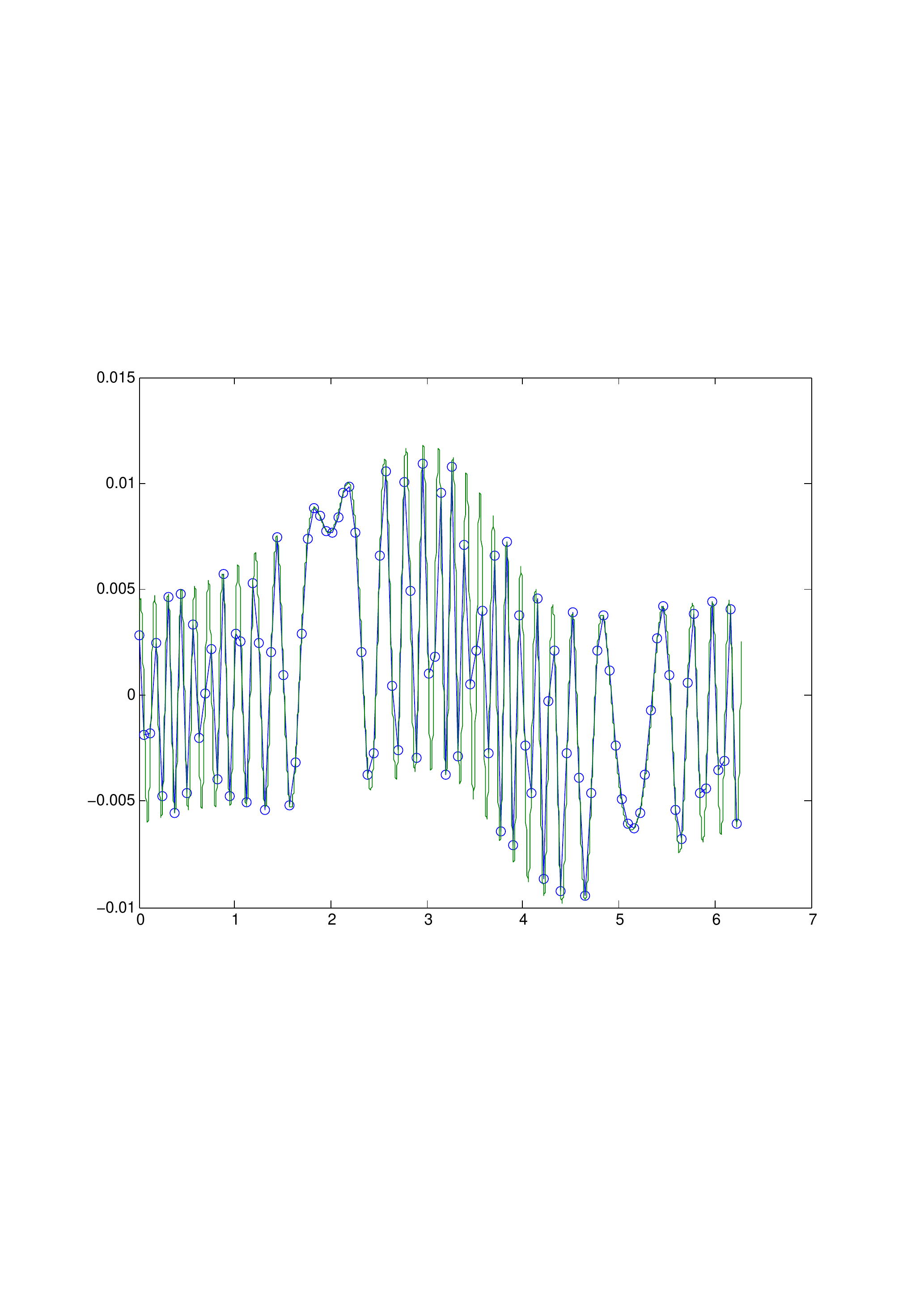}\\
\includegraphics[width=0.6\linewidth]{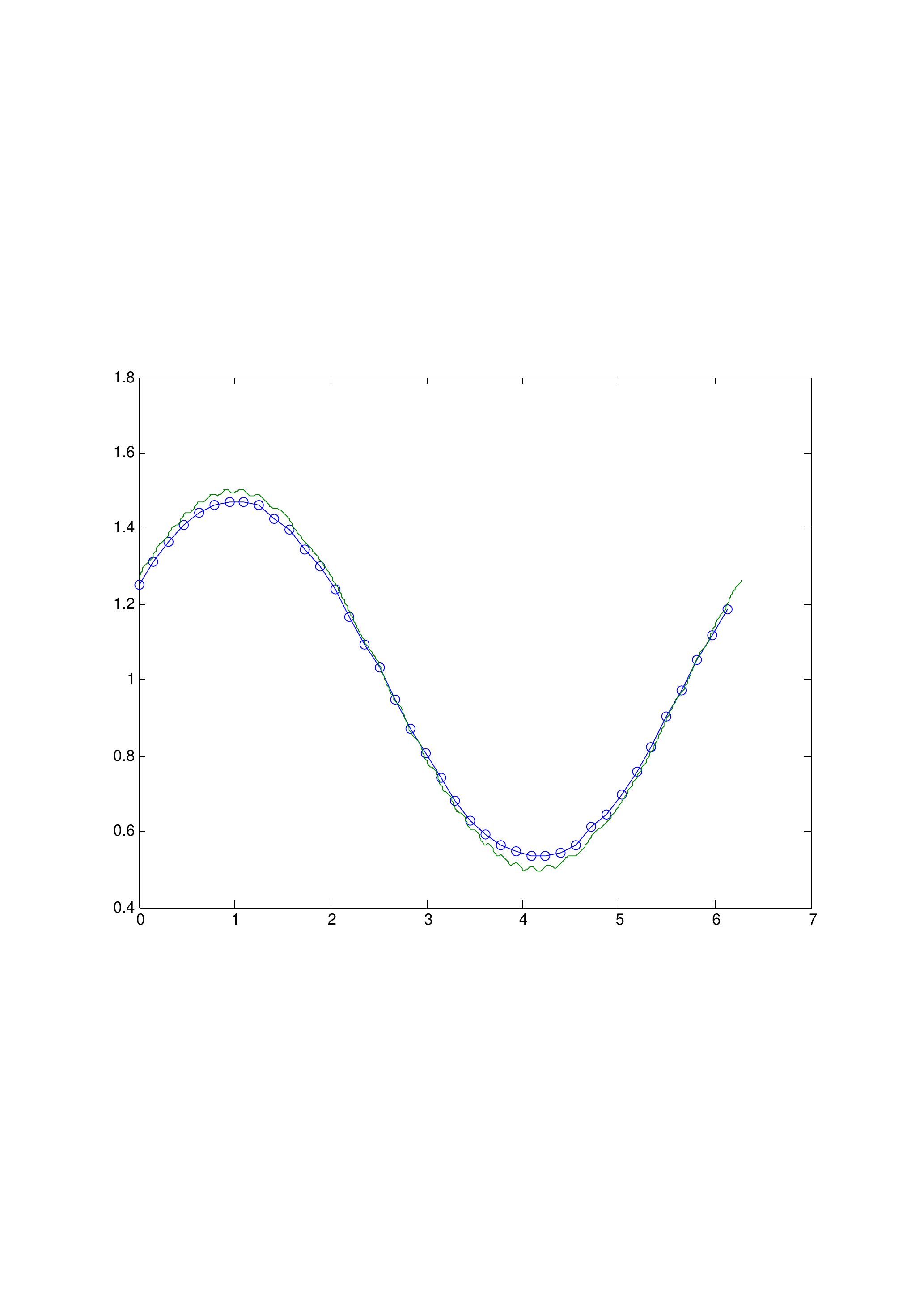}&
\hspace{-2cm}\vspace{-5.5cm}
\includegraphics[width=0.6\linewidth]{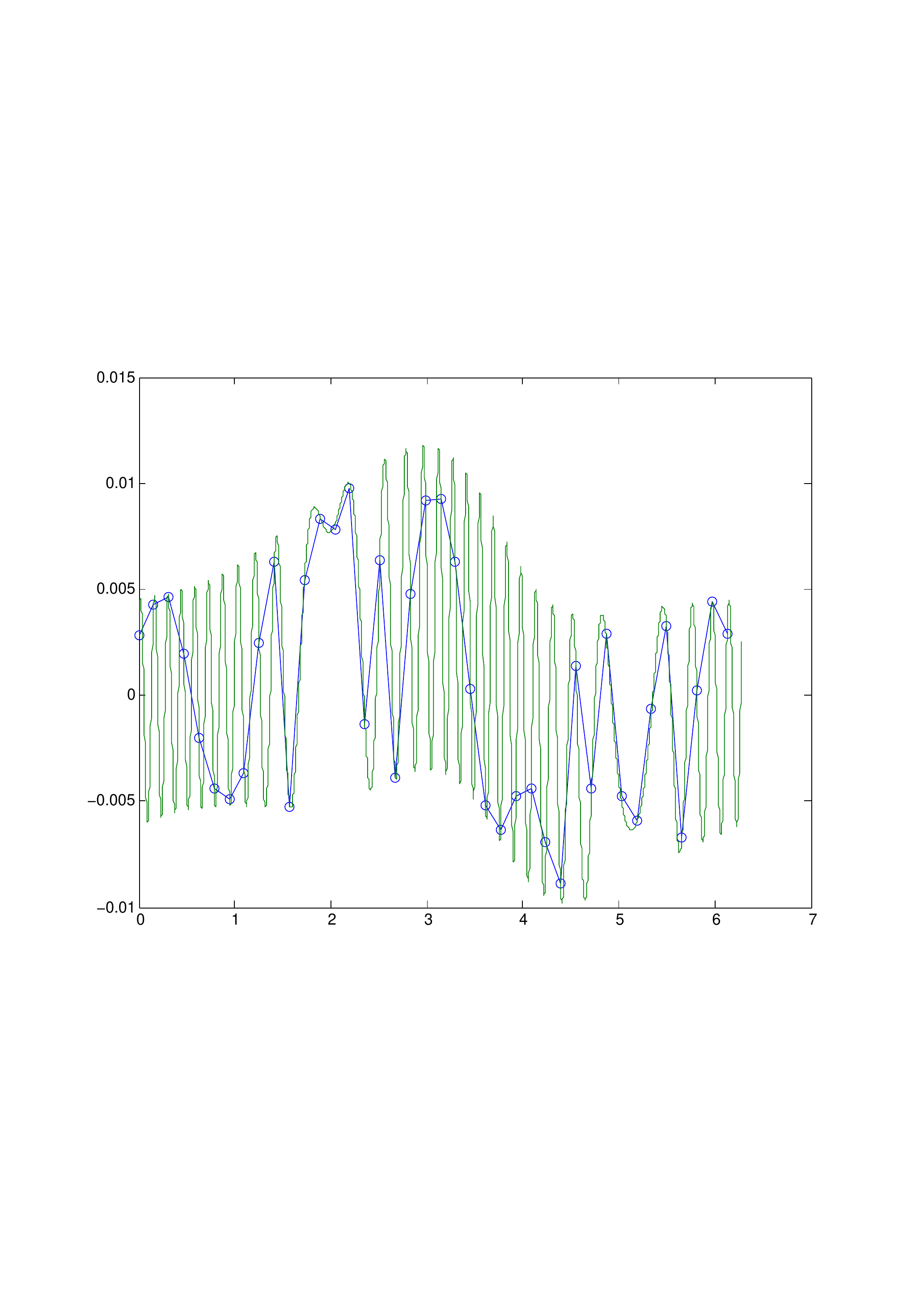}\\
\includegraphics[width=0.6\linewidth]{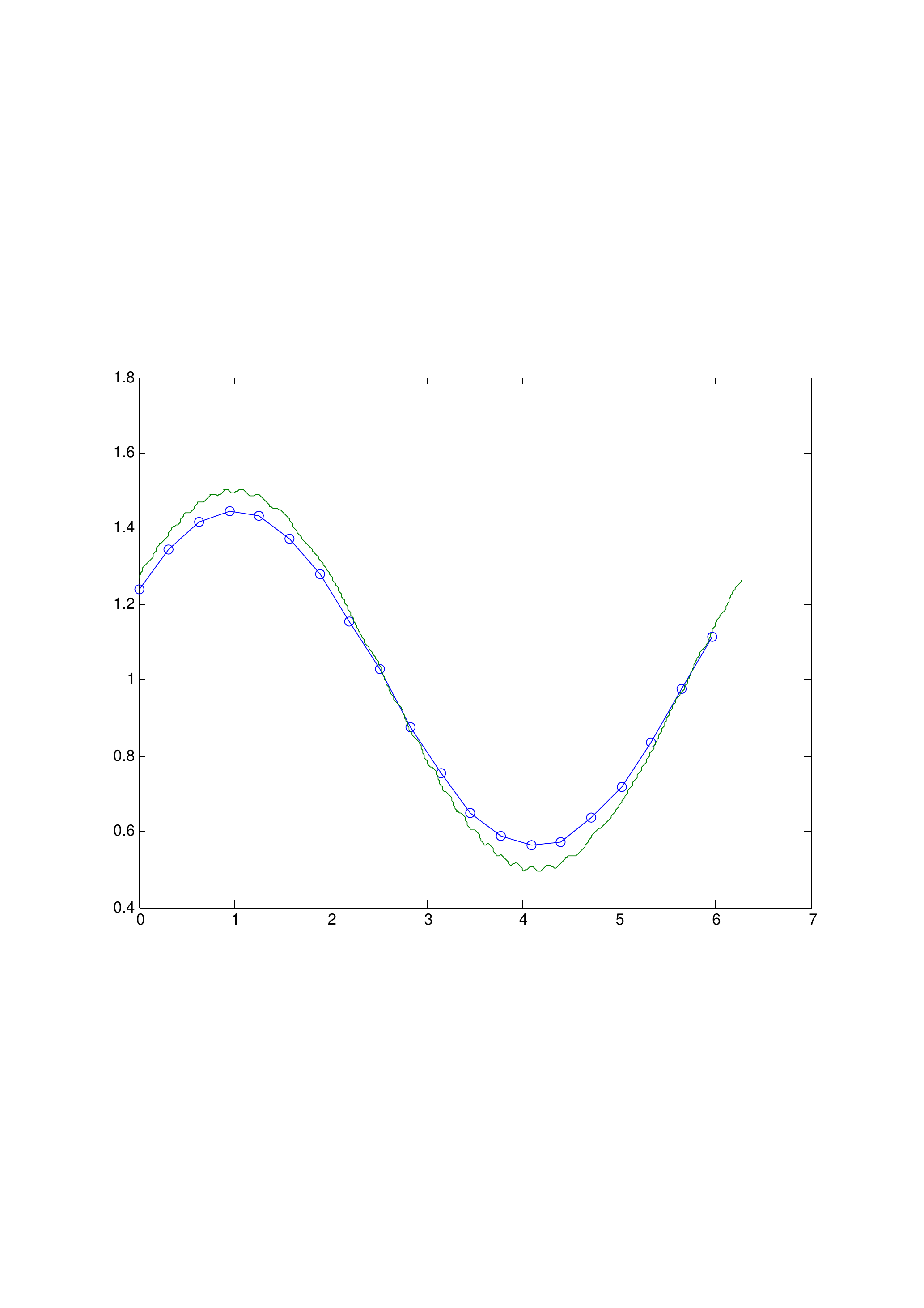}&
\hspace{-2cm}\vspace{-3cm}
\includegraphics[width=0.6\linewidth]{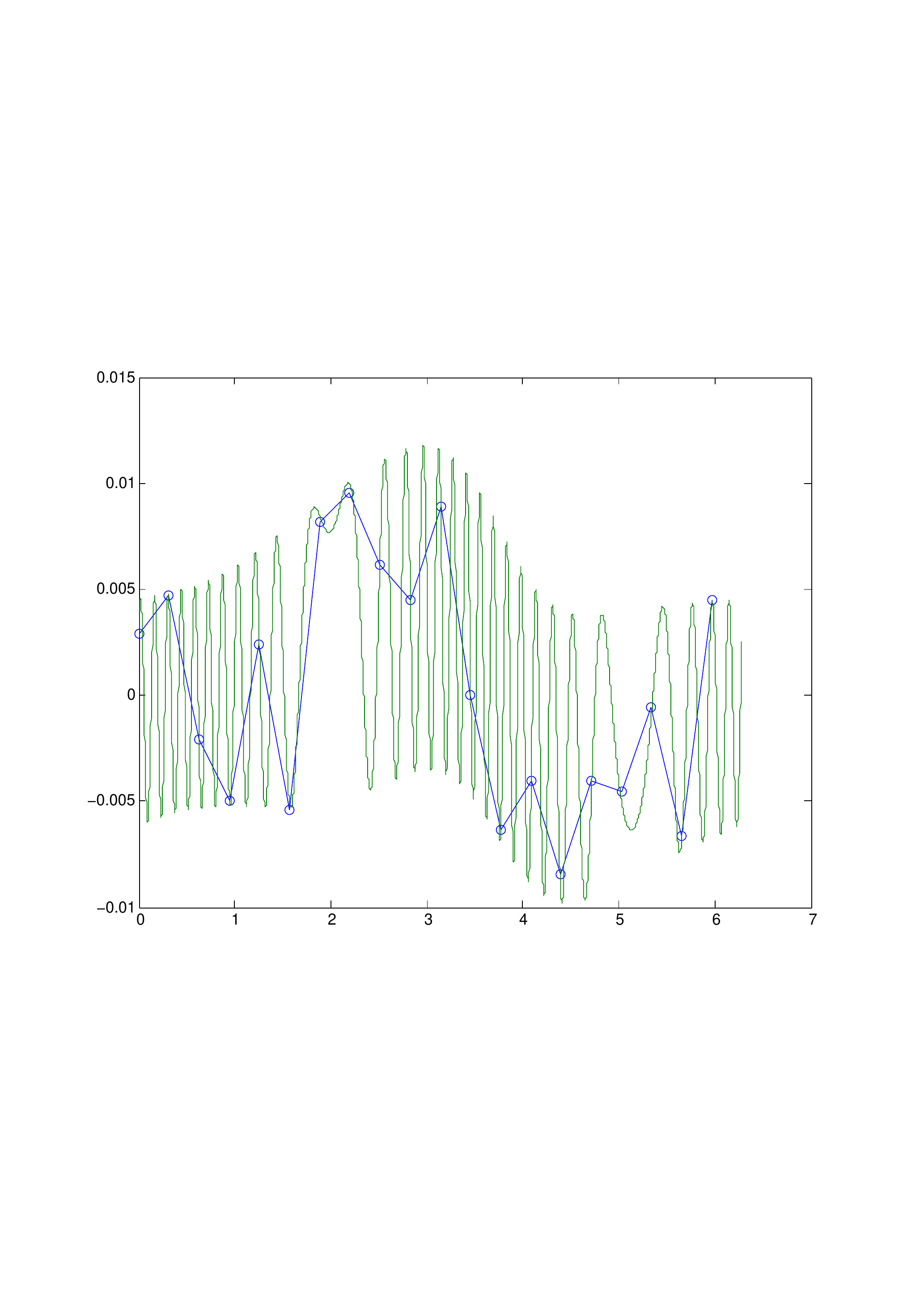}
\end{tabular}
\end{center}
\caption{The first component of the solution (left column: real part, right column: imaginary part)
as a function of $x$ at time $t_f=1$, $\eps=0.01$.
Comparison between the reference solution (with $N_d=4000, \Delta t=10^{-4}$)
and the new method with $N_{ts}=100, 40, 20$ (from top to bottom). }
\label{fig6syst}
\end{figure}

 \begin{figure}
\begin{center}
\begin{tabular}{cclll}
\includegraphics[width=0.6\linewidth]{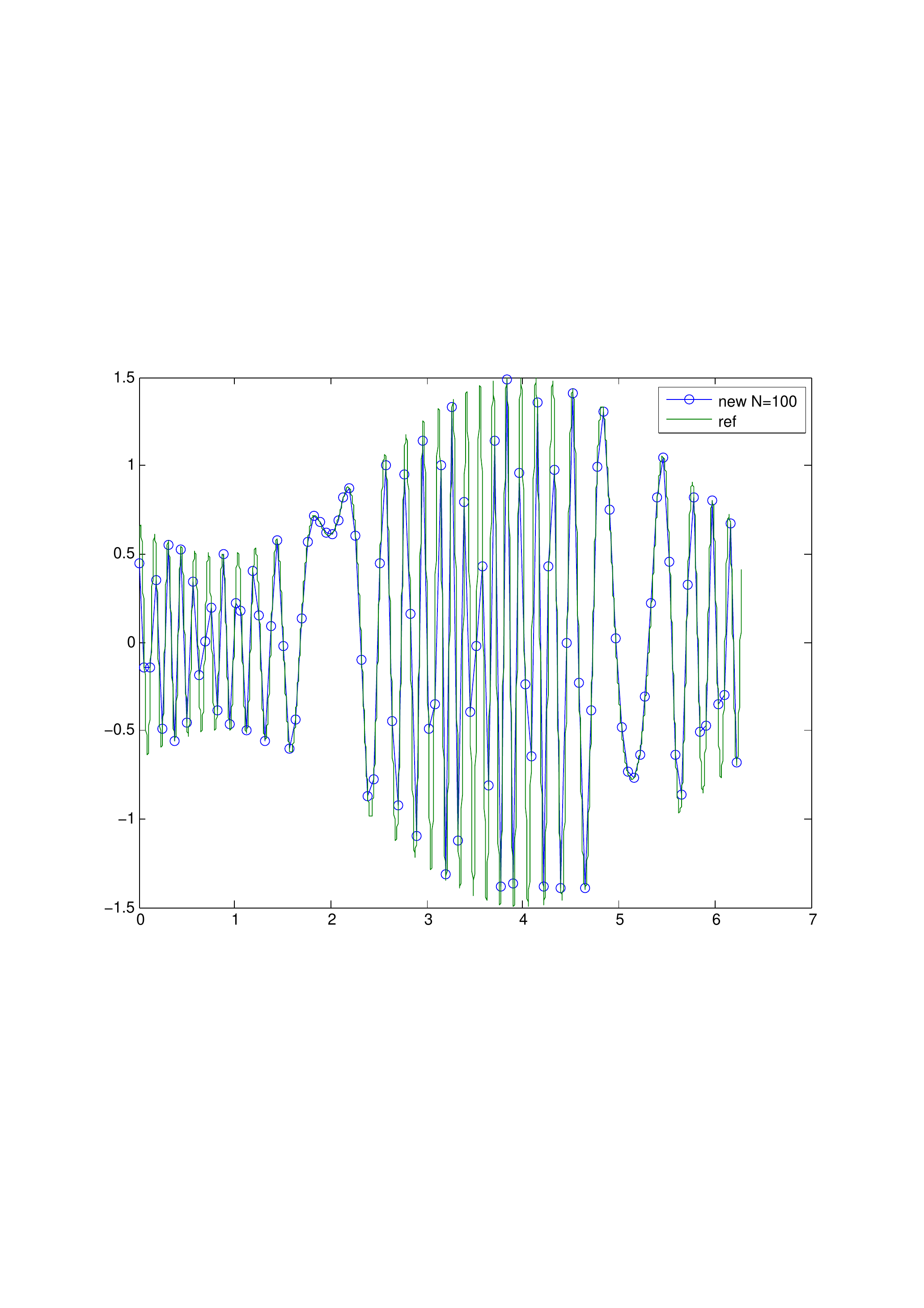}&
\hspace{-2cm}\vspace{-5.5cm}
\includegraphics[width=0.6\linewidth]{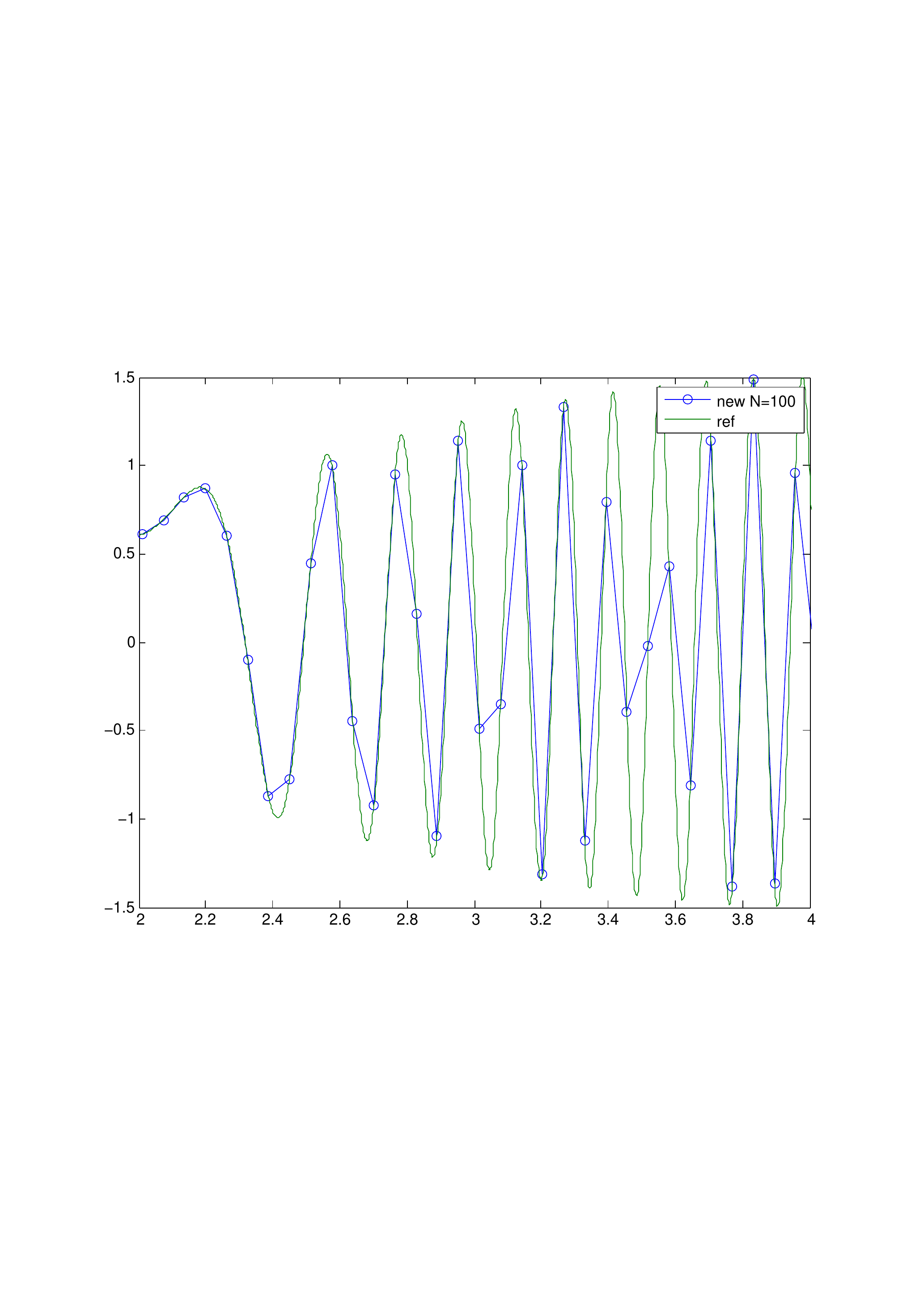}\\
\includegraphics[width=0.6\linewidth]{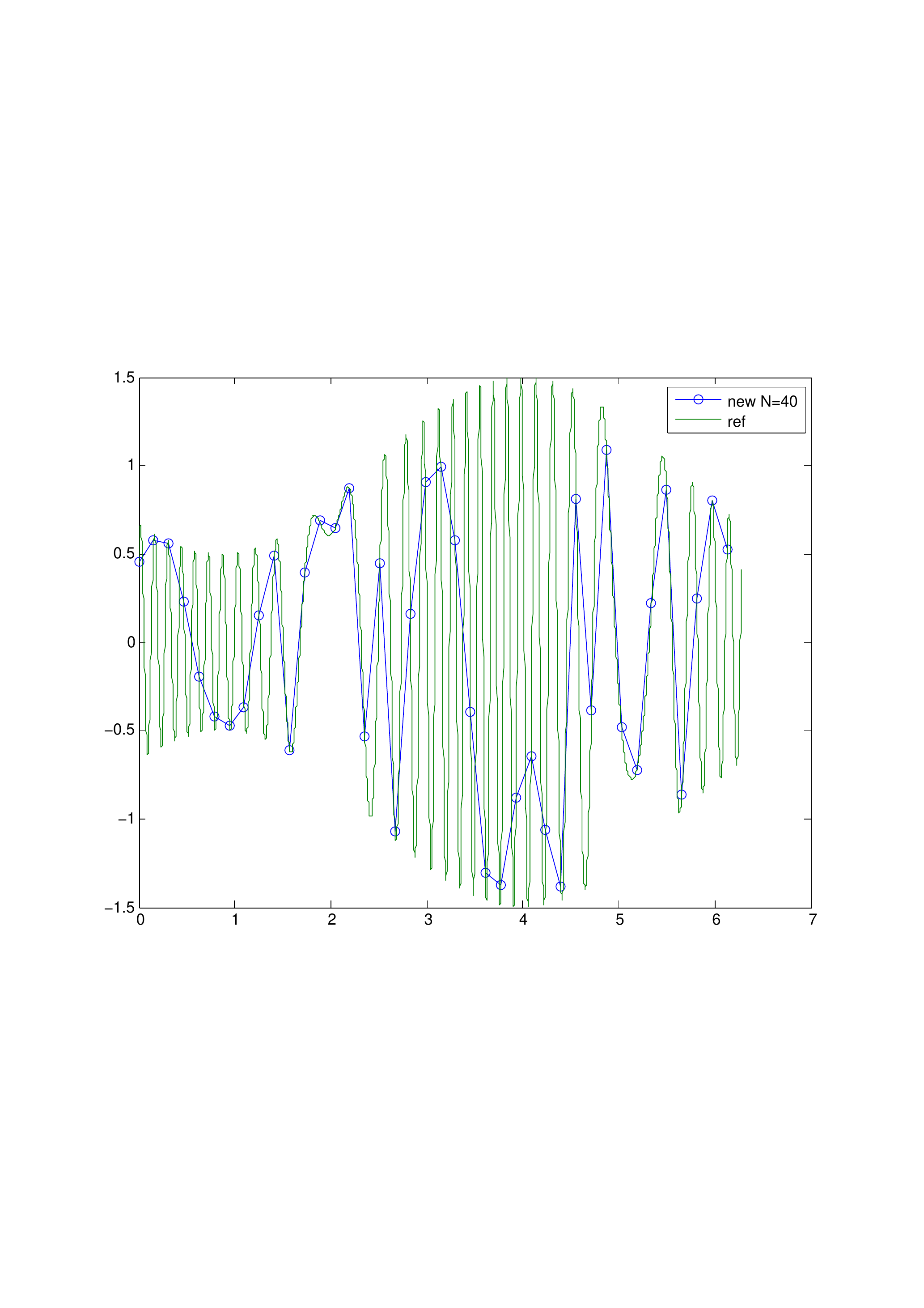}&
\hspace{-2cm}\vspace{-5.5cm}
\includegraphics[width=0.6\linewidth]{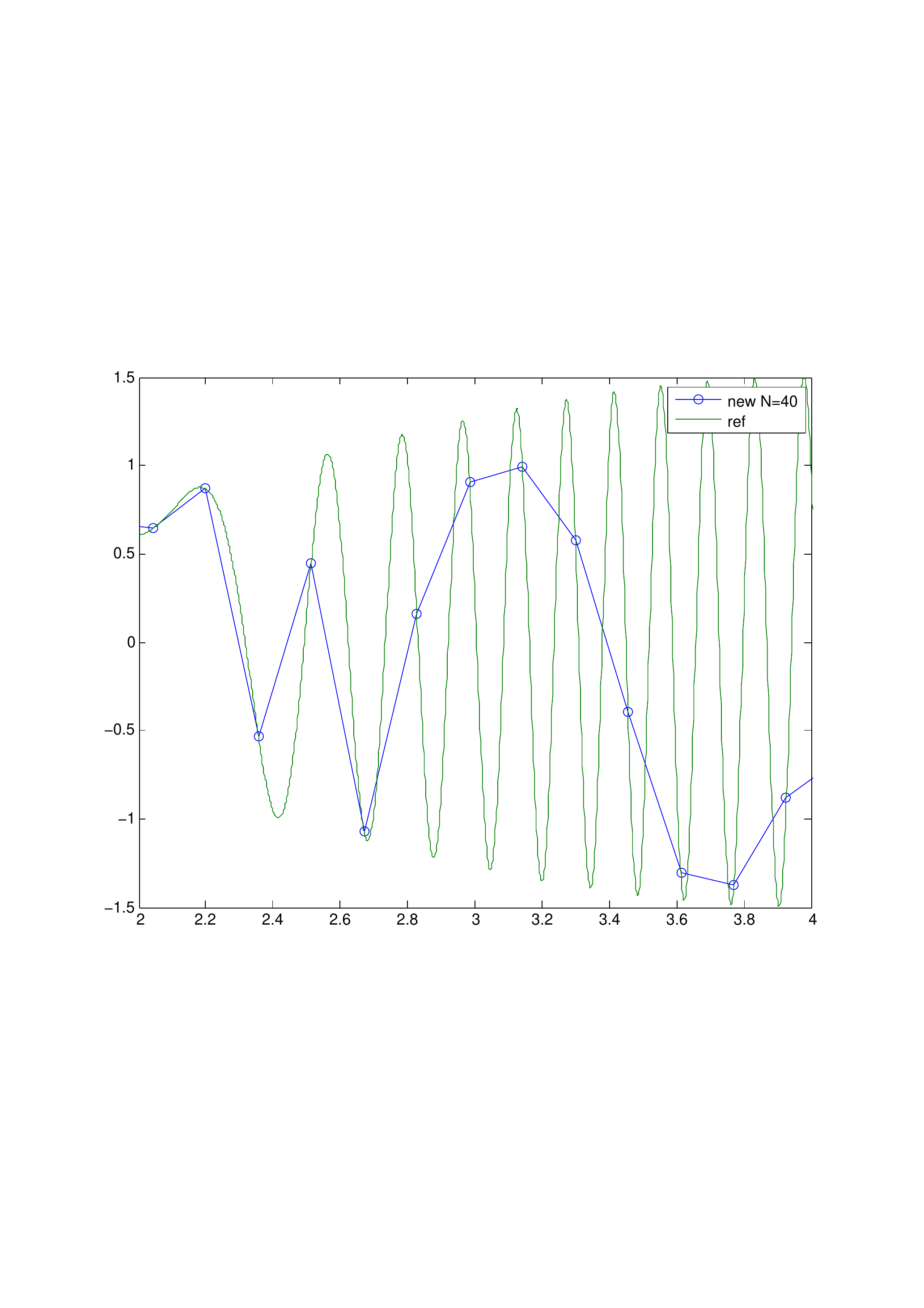}\\
\includegraphics[width=0.6\linewidth]{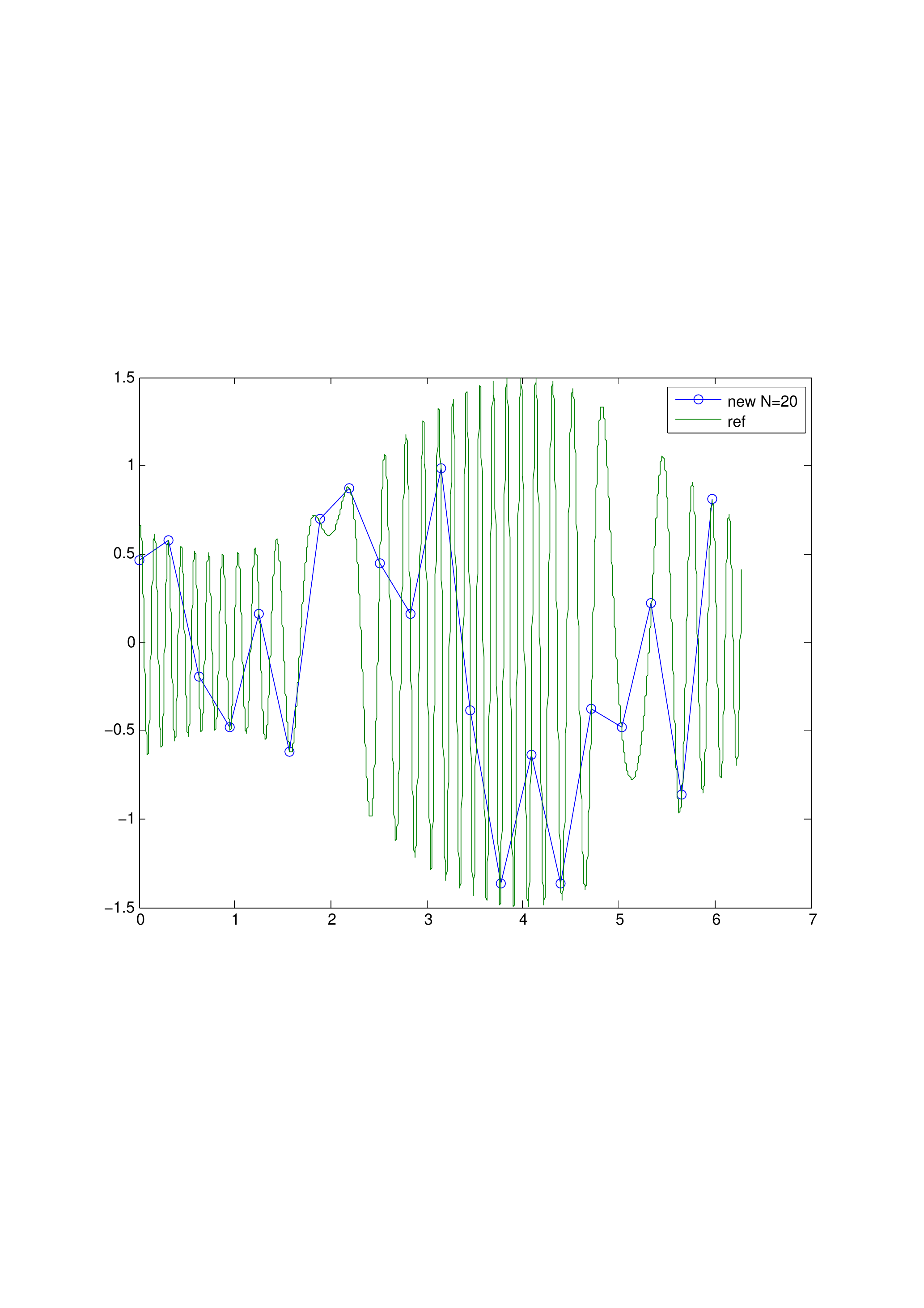}&
\hspace{-2cm}\vspace{-3cm}
\includegraphics[width=0.6\linewidth]{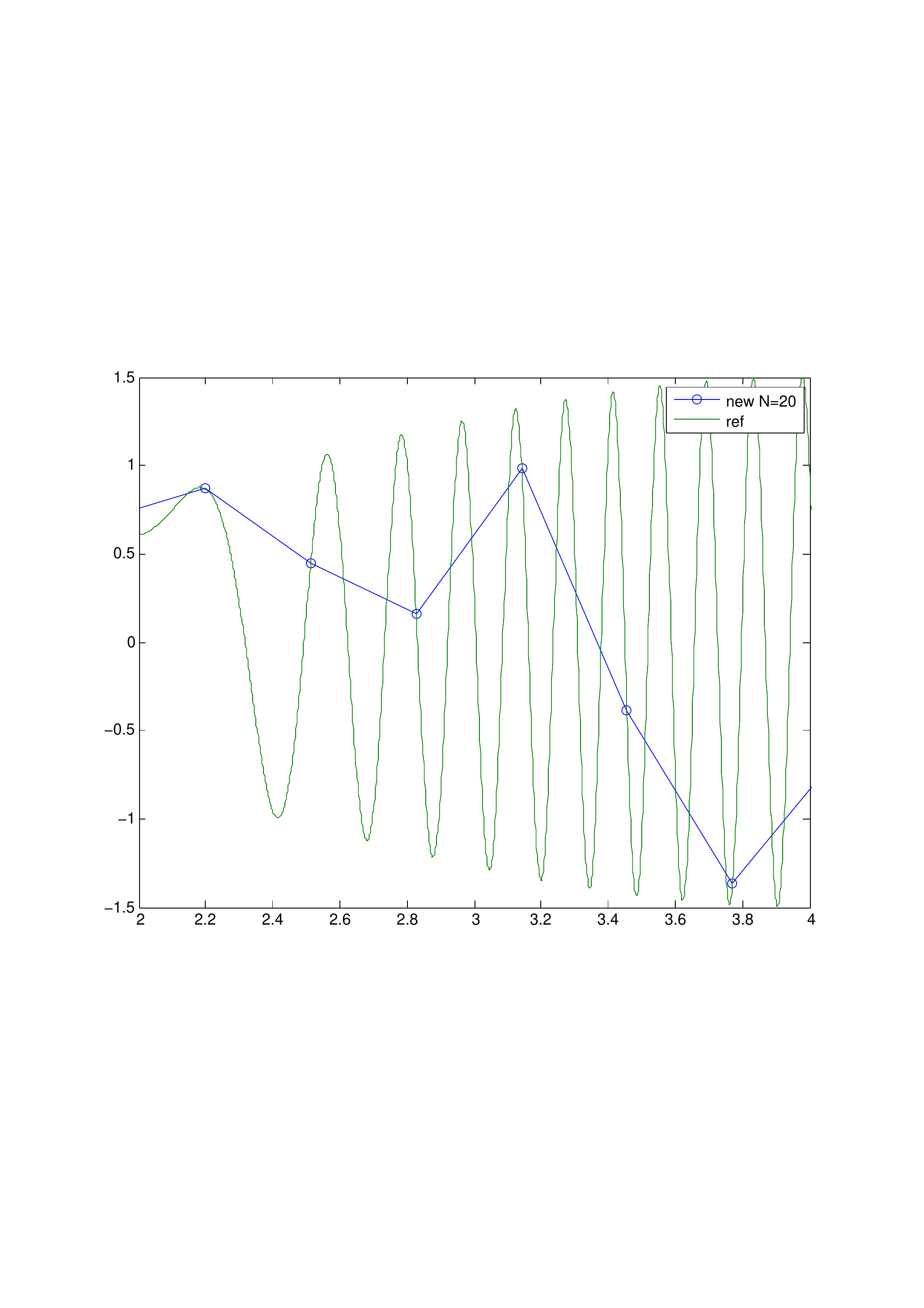}
\end{tabular}
\end{center}
\caption{The real part of the second component of the solution as a function of $x$ at time $t_f=1$, $\eps=0.01$.
Comparison between the reference solution (with $N_d=4000, \Delta t=10^{-4}$)
and the new method with $N_{ts}=100, 40, 20$ (from top to bottom). }
\label{fig7syst}
\end{figure}

\subsection{An application to a semiclassical surface hopping model}

We now show that the general approach  described above can be applied  to efficiently solve  the following semiclasscial surface hopping model, introduced
in \cite{jin-morandi}:

\beq
\label{modelBC-3}
\begin{array}{l} \ds
\partial_t f^+ +  p\cdot \nabla_x f^+ - \nabla_x (U+E)\cdot \nabla_pf^+= \overline b^i f^i + b^i \overline f^i, \\
\ds
\partial_t f^- +p\cdot \nabla_x f^- - \nabla_x (U-E)\cdot \nabla_pf^-= -\overline b^i f^i - b^i \overline f^i, \\ \ds
\partial_t f^i +p\cdot \nabla_x f^i + \nabla_x U\cdot \nabla_pf^i= -i \frac{2E}{\eps}f^i +
 b^i (f^- - f^+)  + (b^+-b^-)f^i,
\end{array}
\eeq
where $(f^+(t, x, p),f^-(t, x, p),f^i(t, x, p)) \in \mathbb{R}\times \mathbb{R}\times \mathbb{C}$, \  $(t,x,p)\in \mathbb{R}_+\times \mathbb{R}^d\times \mathbb{R}^d$, \ and $b^\pm\in \mathbb{C}, b^i\in \mathbb{C}, U\in \mathbb{R}, E\in \mathbb{R}$ are given functions  depending only on the space variable $x$.  We denote also by
$$(f^+(0, x, p),f^-(0, x, p),f^i(0, x, p)) = (f^+_{in}(x, p),f^-_{in}(x, p),f^i_{in}(x, p))$$
the initial conditions.

This model approximates semiclassically the nucleaonic Schr\"odinger system arising
from the Born-Oppenheimer approximation with non-adiabatic corrections. The right hand side describes the
interband transition between different potential energy surfaces
($2E$ is the {\it band gap} between two energy surfaces), and the
coefficients are related to Berry connection.
We refer to  \cite{jin-morandi} for more details.

As explained above, the general idea is to introduce a
phase $S(t,x,p)$ designed to follow the main oscillations in this model.
We then consider the phase $S(t,x,p)$, solution to
\beq
\label{eq_S_bc}
\partial_t S+p\cdot \nabla_x S + \nabla_x U\cdot \nabla_p S= 2E, \quad S(0,x,p)=0,
\eeq
and introduce the augmented unknowns $(F^\pm,F^i)(t,x,p,\tau)$ satisfying
$$ f^\pm(t,x,p)= F^\pm(t,x,p,S(t,x,p)/\eps), \quad f^i(t,x,p)=F^i(t,x,p,S(t,x,p)/\eps).$$
One then has:
\begin{eqnarray}
\ds &&\partial_t F^+ +  p\cdot \nabla_x F^+ - \nabla_x (U+E)\cdot \nabla_pF^+=\nonumber\\
&&\hspace{2cm}\frac{-1}{\eps} \left (2E- \nabla_x(2U+E)\cdot \nabla_pS\right) \partial_\tau F^+ + \overline b^i F^i + b^i \overline F^i, \nonumber\\
\ds
&&\partial_t F^- +p\cdot \nabla_x F^- - \nabla_x (U-E)\cdot \nabla_pF^-=\nonumber\\
&&\hspace{2cm}\frac{-1}{\eps} \left (2E- \nabla_x(2U-E)\cdot \nabla_pS\right) \partial_\tau F^- -\overline b^i F^i - b^i \overline F^i, \nonumber\\ \ds
&&\partial_t F^i +p\cdot \nabla_x F^i + \nabla_x U\cdot \nabla_pF^i=\nonumber\\
\label{modelBC-FFi}
&&\hspace{2cm}- \frac{2E}{\eps}\left(\partial_\tau F^i+ iF^i\right) +
 b^i (F^- - F^+)  + (b^+-b^-)F^i.
\end{eqnarray}
Let $G^i= e^{i\tau} F^i$, then
\beq
\label{modelBC-FGi}
\begin{array}{l} \ds
\ds \partial_t F^+ +  p\cdot \nabla_x F^+ - \nabla_x (U+E)\cdot \nabla_pF^+=
-\frac{{\cal E}^{+}}{\eps}  \partial_\tau F^+ + \overline b^i e^{-i\tau}G^i + b^i e^{i\tau}\overline G^i, \\
\ds
\partial_t F^- +p\cdot \nabla_x F^- - \nabla_x (U-E)\cdot \nabla_p F^-= -\frac{{\cal E}^{-}}{\eps}
 \partial_\tau F^- -\overline b^i e^{-i\tau}G^i - b^i e^{i\tau}\overline G^i, \\ \ds
\partial_t G^i +p\cdot \nabla_x G^i + \nabla_x U\cdot \nabla_pG^i=- \frac{2E}{\eps}\partial_\tau G^i +
 b^i e^{i\tau}(F^- - F^+)  + (b^+-b^-)G^i,
\end{array}
\eeq
where ${\cal E}^{\pm}= 2E- \nabla_x(2U\pm E)\cdot \nabla_pS$. This system needs initial data $F(0,x,p,\tau)$ and $G^i(0,x,p,\tau)$, which will be determined in a such way  that the  corresponding solution  is smooth with respect to $\eps$.  We proceed as in Section \ref{syst_initcond} for the $2 \times 2$ model. Let
$$F_0^{\pm}= \Pi F^{\pm}, \quad F_1^{\pm}= ({\cal I}- \Pi) F^{\pm},\quad G_0^i= \Pi G^i, \quad G_1^i= (I- \Pi) G^i.$$
We have
\beq
\begin{array}{l}
\label{Cet}
 \ds G^i=G^i_0-i \frac{\eps}{2E}b^i e^{i\tau}(F^-_0 - F^+_0)   + O(\eps^2), \\
  F^+=F^+_0+i \frac{\eps}{\cal E^+}\left(\overline b^i e^{-i\tau}G^i_0 -b^i e^{i\tau}\overline G^i _0  \right) + O(\eps^2), \\
\ds F^-=F^-_0-i \frac{\eps}{\cal E^-}\left(\overline b^i e^{-i\tau}G^i_0 - b^i e^{i\tau}\overline G^i _0  \right) + O(\eps^2).
\end{array}
\eeq
To fit with the initial data $(f_{in}^+,f_{in}^-, f_{in}^i )$, we set
$$ G^i_0-i \frac{\eps}{2E}b^i (F^-_0 - F^+_0) =f_{in}^i , $$
$$ F^+_0+i \frac{\eps}{\cal E^+}\left(\overline b^i G^i_0 - b^i \overline G^i _0  \right) = f_{in}^+, $$
$$ F^-_0-i \frac{\eps}{\cal E^-}\left(\overline b^i G^i_0 - b^i \overline G^i _0  \right)= f_{in}^-.$$
This gives

$$ F^+_0= f_{in}^+- i \frac{\eps}{\cal E^+}\left(\overline b^i  f_{in}^i  - b^i \overline f_{in}^i   \right), $$
$$ F^-_0=f_{in}^-+ i \frac{\eps}{\cal E^-}\left(\overline b^i  f_{in}^i  - b^i \overline f_{in}^i   \right), $$
$$ G^i_0=f_{in}^i- i \frac{\eps}{2E}b^i (f_{in}^+ - f_{in}^-).$$
Reporting these expressions  in (\ref{Cet}) yields
\beq
\label{CE0}
\begin{array}{l}
 F^+(0,x,p, \tau)= f_{in}^+-i \frac{\eps}{\cal E^+}\left(\overline b^i  f_{in}^i \left(1-e^{-i\tau} \right) - b^i \overline f_{in}^i \left(1-e^{-i\tau }\right)   \right), \\
 F^-(0,x,p,\tau)=f_{in}^-+ i \frac{\eps}{\cal E^-}\left(\overline b^i  f_{in}^i \left(1-e^{-i\tau }\right) - b^i \overline f_{in}^i \left(1-e^{-i\tau}\right)   \right), \\
 G^i(0,x,p,\tau)=f_{in}^i+ i \frac{\eps}{2E}b^i  \left(e^{i\tau}-1\right) (f_{in}^+ - f_{in}^-).
\end{array}
\eeq

\subsection{Numerical results}
We consider the following initial conditions for \eqref{modelBC-3} with $x, v\in [-2\pi, 2\pi]$
\begin{eqnarray*}
f_+(t=0, x, p)&=& f_-(t=0, x, p) = (1+0.5\cos(x))\frac{e^{-p^2/2}}{\sqrt{2\pi}},  \nonumber\\
f_i(t=0, x, p)&=& \Big[(1+0.5\sin(x)) + i (1+0.5\cos(x))\Big] \frac{e^{-p^2/2}}{\sqrt{2\pi}},
\end{eqnarray*}
and the following expression for $E$, $b_i$ and $b_\pm$
$$
E(x)=1-\cos(x/2)+\varepsilon, \;\;\; b_i(x, p) = -\frac{1}{2}\sin(p+1), \;\;\; b_\pm=0.
$$
Notice that with this choice of $E$, the narrowest band gap $2E=2\eps$ which describes the so-called
"avoided-crossing" case (see \cite{jin-morandi}).
We will compare a direct simulation of the model \eqref{modelBC-3} (using time splitting and
pseudo-spectral methods in space)
with our new approach \eqref{modelBC-FGi} (using time splitting,
pseudo-spectral methods in space also and the well-prepared initial condition \eqref{CE0}).
Moreover, periodic boundary conditions are considered in both $x$ and $p$.

In the sequel, we detail the steps of the two methods (direct and new).
First, we introduce the following notations:
${\cal A}=(- \nabla_x (U+E), - \nabla_x (U-E), \nabla_x U, \nabla_x U)$,
and ${\cal E}=(-{\cal E}^+, -{\cal E}^-, -2E/\eps, -2E/\eps)$
whereas the matrix $B_\tau$ is given by
$$
B_\tau =
\left(
\begin{array}{llllcccc}
0 &0 & 2 b^i\cos\tau & 2 b^i\sin\tau\\
0 &0 & -2 b^i\cos\tau & -2 b^i\sin\tau\\
-b^i\cos\tau & b^i\cos\tau & 0 & 0 \\
-b^i\sin\tau & b^i\sin\tau & 0 & 0
\end{array}
\right), \;\;
$$
and $B$ by
$$
B =
\left(
\begin{array}{llllcccc}
0 &0 & 2 b^i & 0\\
0 &0 & -2 b^i & 0\\
-b^i & b^i & 0 & 2E/\eps \\
0 & 0 & -2E/\eps &0
\end{array}
\right).
$$
Then, the direct numerical scheme for \eqref{modelBC-3} writes  (with $f=(f^+, f^-$, Re$(f^i)$, Im$(f^i))\in \R^4$)
\begin{itemize}
\item solve $\partial_t f + p\partial_x f = 0$ with spectral method in space and exact integration in time,
\item solve $\partial_t f + {\cal A}\partial_p f = 0$ with spectral method in space and exact integration in time,
\item solve $\partial_t f = Bf$ (with $B$ a $4$x$4$ matrix given above) exactly in time.
\end{itemize}
The numerical  scheme for \eqref{modelBC-FGi} is (with $F=(F^+, F^-$, Re$(G^i)$, Im$(G^i))\in \R^4$)
\begin{itemize}
\item solve $\partial_t F+ p\partial_x F = 0$ with spectral method in space and exact integration in time,
\item solve $\partial_t F + {\cal A}\partial_p F = 0$ with spectral method in space and exact integration in time,
\item solve $\partial_t F = B_\tau F$ (with $B_\tau$ a $4$x$4$ matrix given above) exactly in time,
\item solve $\partial_t F = \frac{1}{\varepsilon}{\cal E}\partial_\tau F$ with a pseudo-spectral method in $\tau$ and 
an implicit Euler scheme in time (exact time integration in the Fourier space can also be done).
\end{itemize}
The equation \eqref{eq_S_bc} on $S$ is solved using a time splitting method (between transport and right hand side)
and spectral methods are used in $(x,p)$.

In Figure \ref{fig1_eps1}, we plot the space dependence of the solution
$f(t_f=2, x, p=0)$ and of the densities $\rho(t_f=2, x)=\int_{\mathbb{R}} f(t_f=2, x, p)dp$,
for $\eps=1$ for the direct and the new methods.
The reference solution uses $\Delta t=0.05$, $N_x=256$, $N_p=64$ whereas for the new method,
we choose $\Delta t=0.05$, $N_x=32$, $N_p=64$ and $N_\tau=8$.
First, we observe that the new method captures well the solution for both diagnostics.
Second, the CPU time is about $15$ s for
the reference method whereas for the new method, it is about  $1$ min.

In Figures \ref{fig1_eps1s32} and \ref{fig2_eps1s32}, we consider the same diagnostics as before,
but with $\eps=1/32$. The reference solution uses now $\Delta t=0.02$, $N_x=512$, $N_p=64$
whereas we still choose $\Delta t=0.05$, $N_x=32$, $N_p=64$ and $N_\tau=8$
for the new method. Then, the CPU time for the reference method is now $75$ s
and is still $1$ min for the new method. Even for this value of $\eps$, the solution is
highly oscillatory (the $f^i$ part in particular) and the new method behaves very well
even its mesh is coarser than the spatial oscillations.

Finally,  in Figures \ref{fig1_eps1s256} and \ref{fig2_eps1s256}, we consider $\eps=1/256$ and $t_f=0.2$.
The numerical parameters for the reference method have been chosen to resolve the space-time oscillations
($\Delta t=0.0005$, $N_x=4096$, $N_p=64$) so that the CPU time is $1420$ s.
The numerical parameters of the new method are still fixed (so as  its CPU time).
The same conclusions as before arise.

\newpage

\begin{figure}
\vspace{-4cm}
\begin{center}
\begin{tabular}{ccllll}
\includegraphics[width=0.5\linewidth]{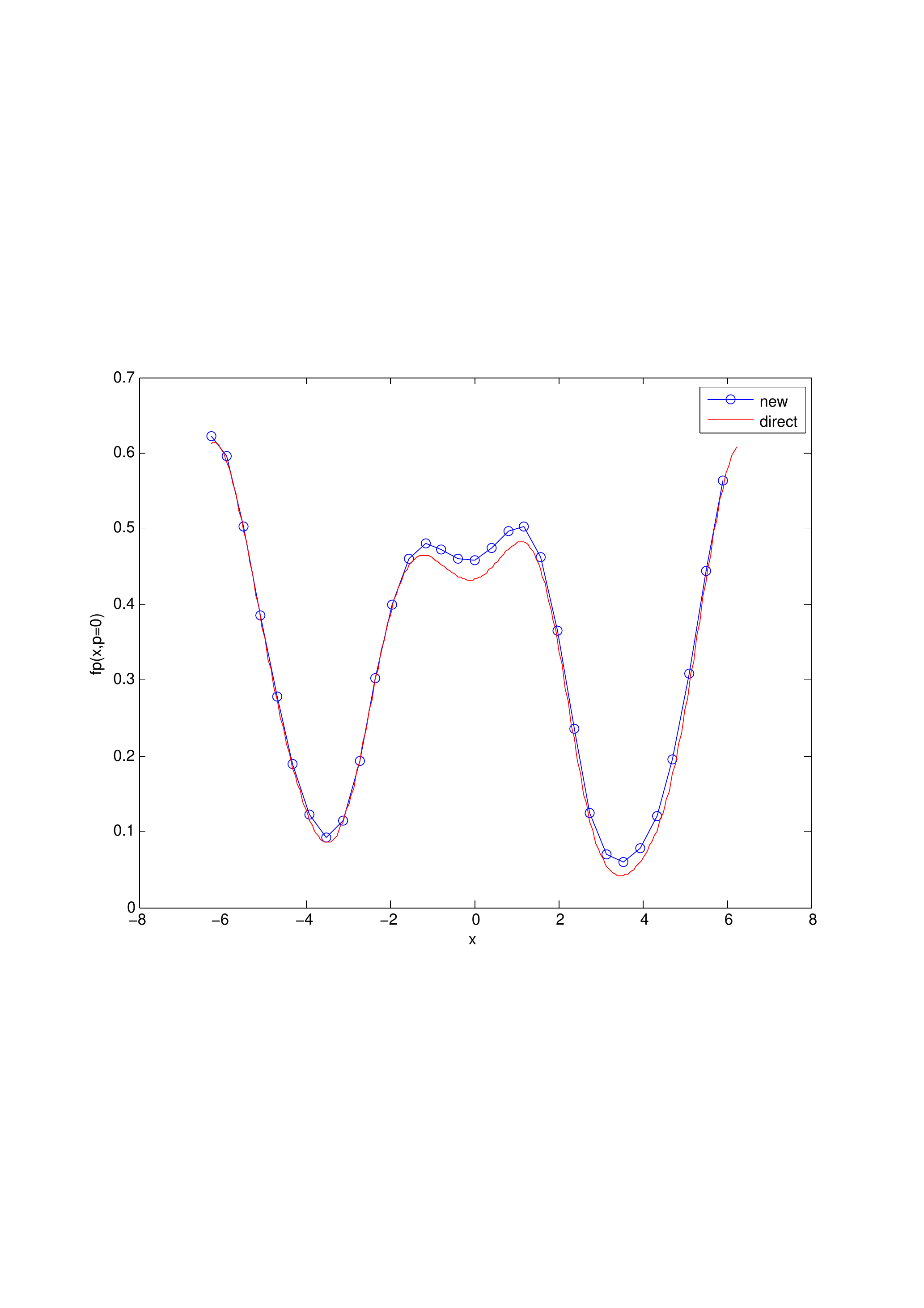}&
\hspace{-1.2cm}\vspace{-5cm}
\includegraphics[width=0.5\linewidth]{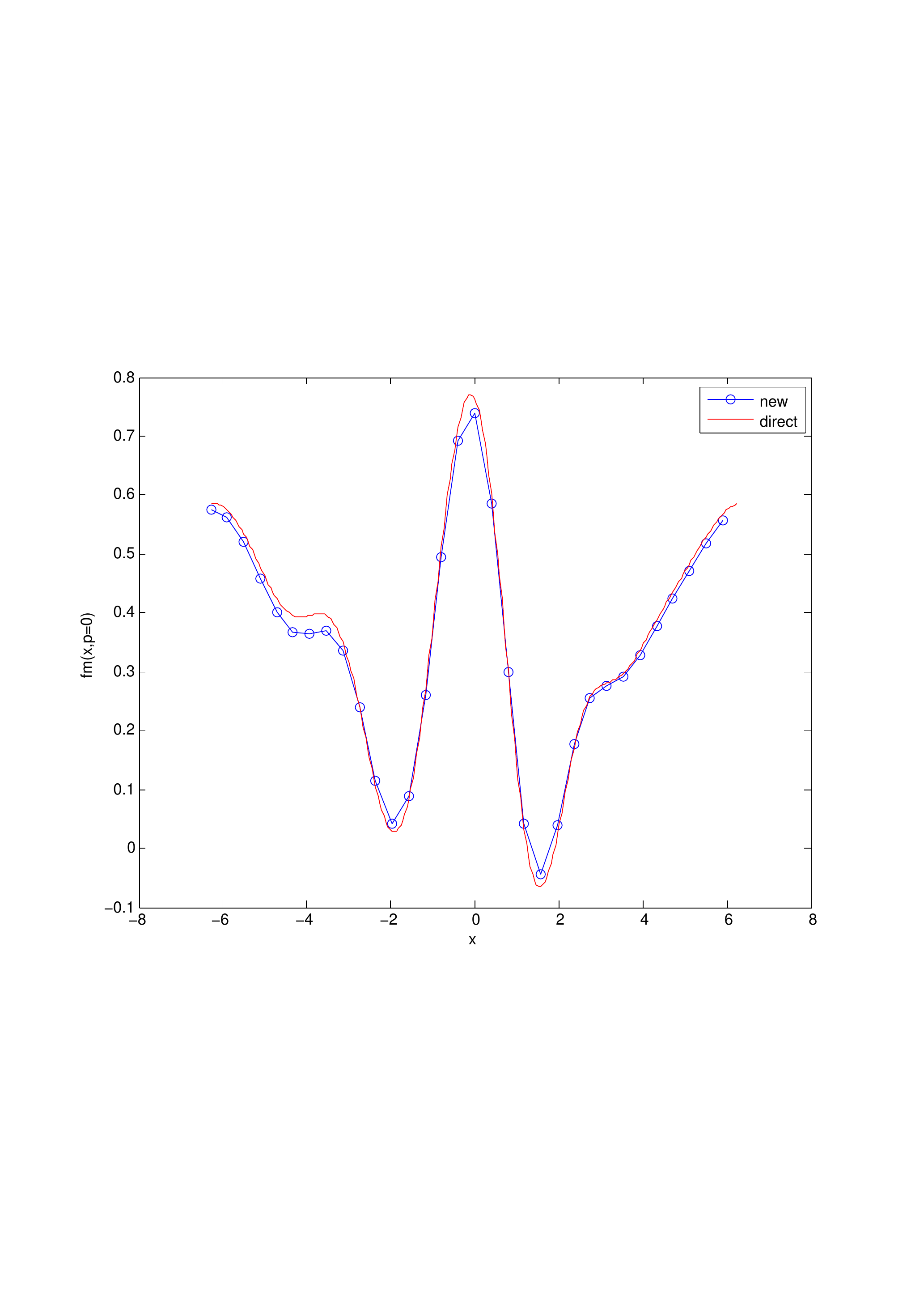}\\
\includegraphics[width=0.5\linewidth]{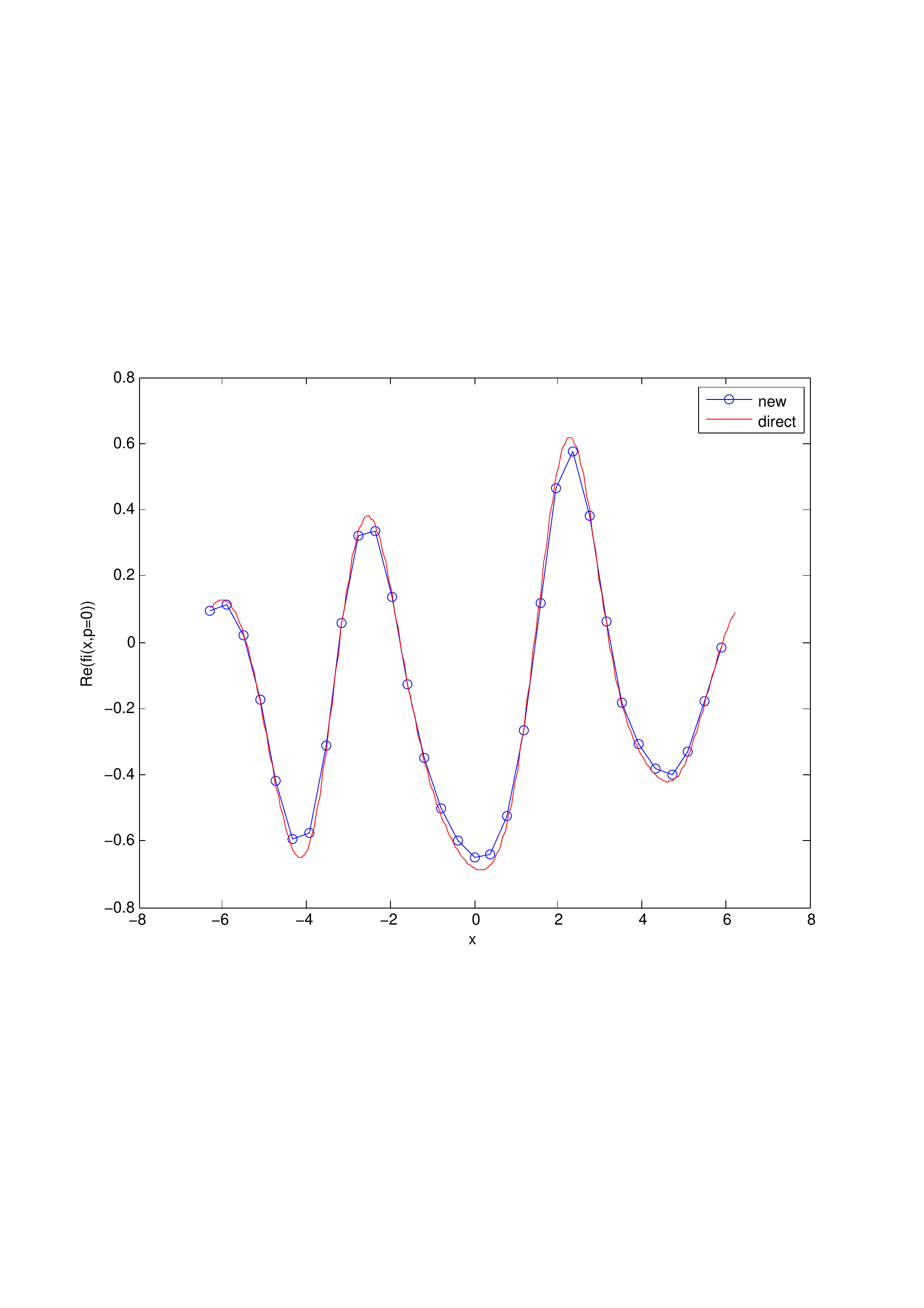}&
\hspace{-1.2cm}\vspace{-5cm}
\includegraphics[width=0.5\linewidth]{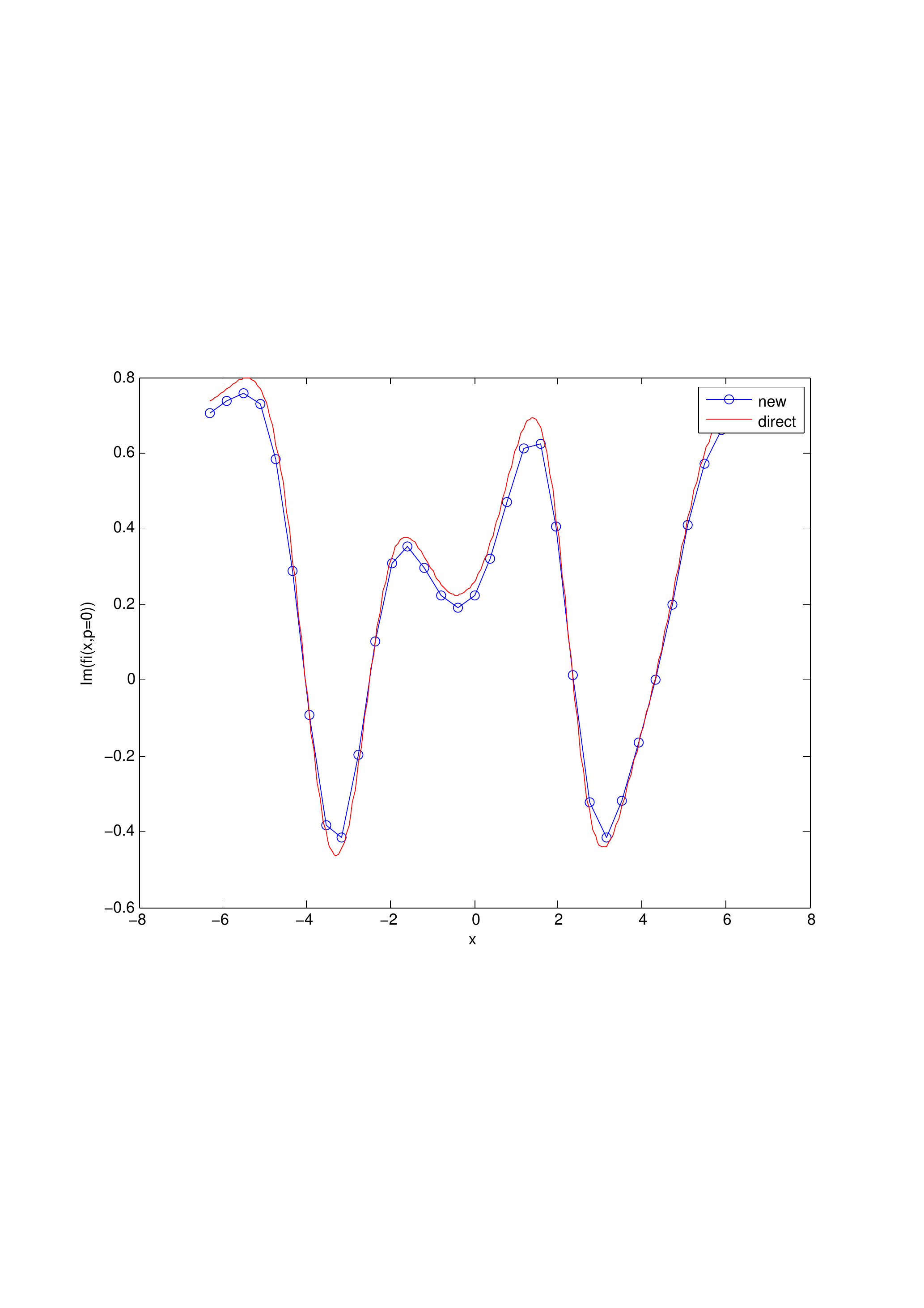}\\
\includegraphics[width=0.5\linewidth]{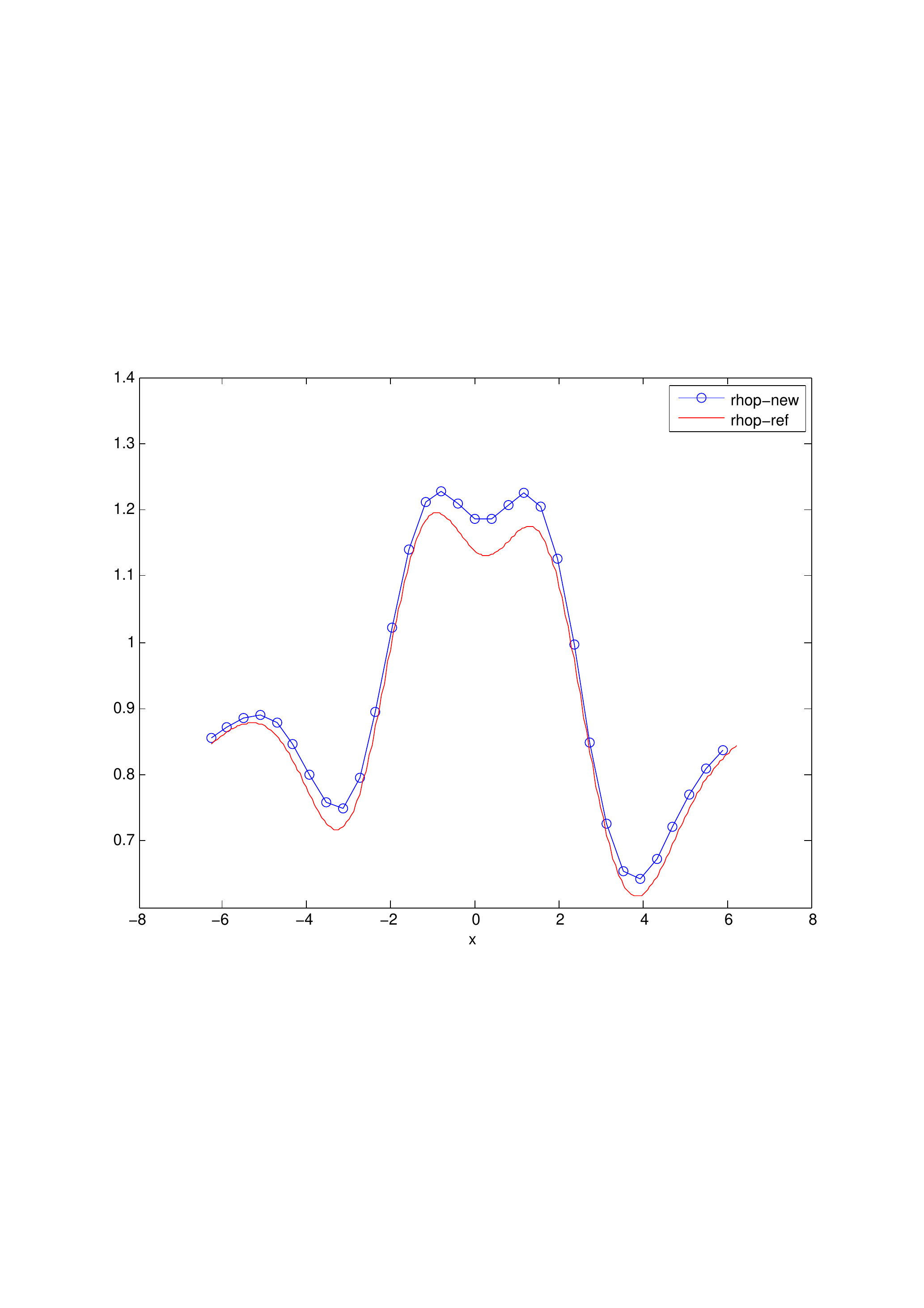}&
\hspace{-1.2cm}\vspace{-5cm}
\includegraphics[width=0.5\linewidth]{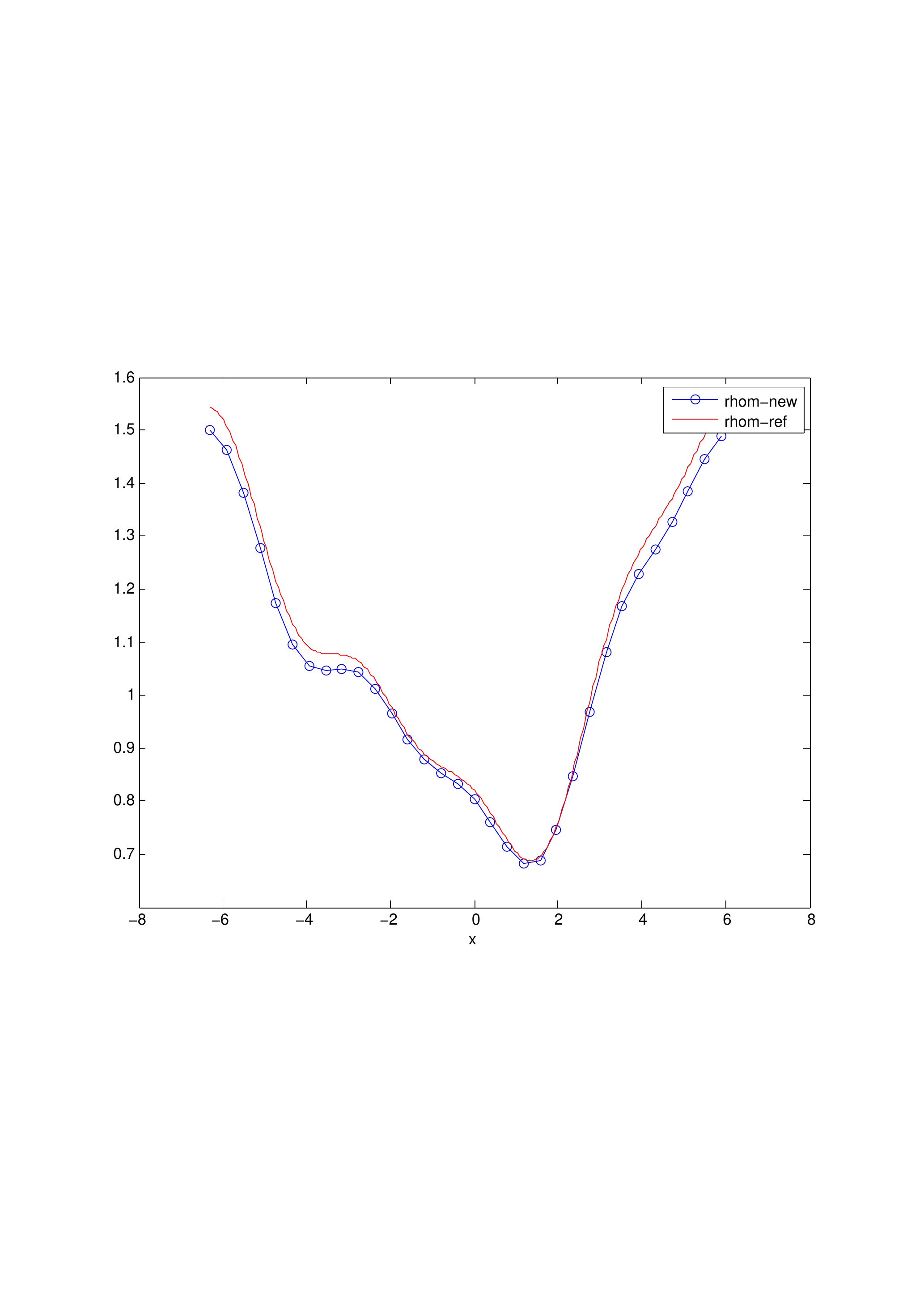}\\
\includegraphics[width=0.5\linewidth]{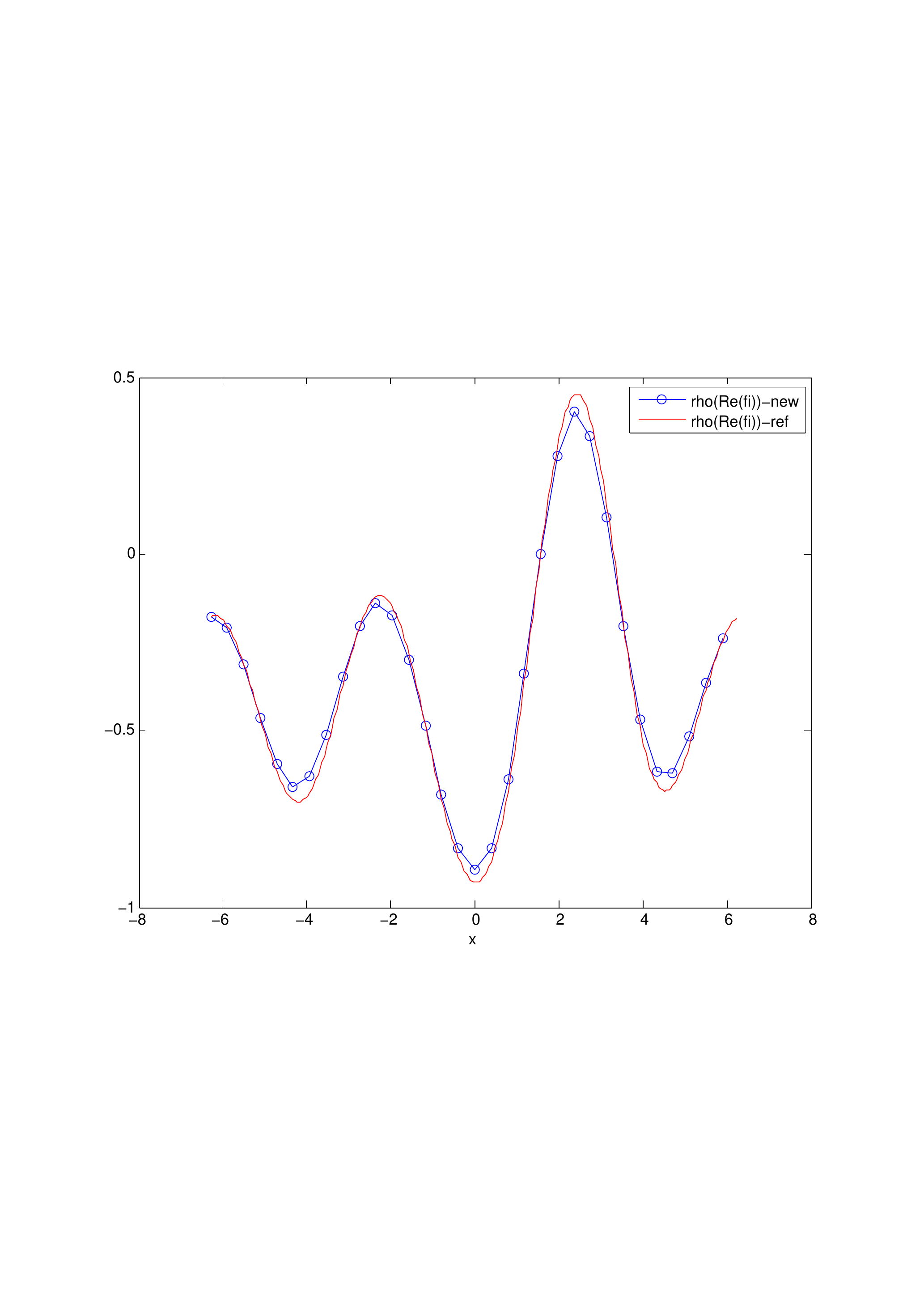}&
\hspace{-1.2cm}\vspace{-3cm}
\includegraphics[width=0.5\linewidth]{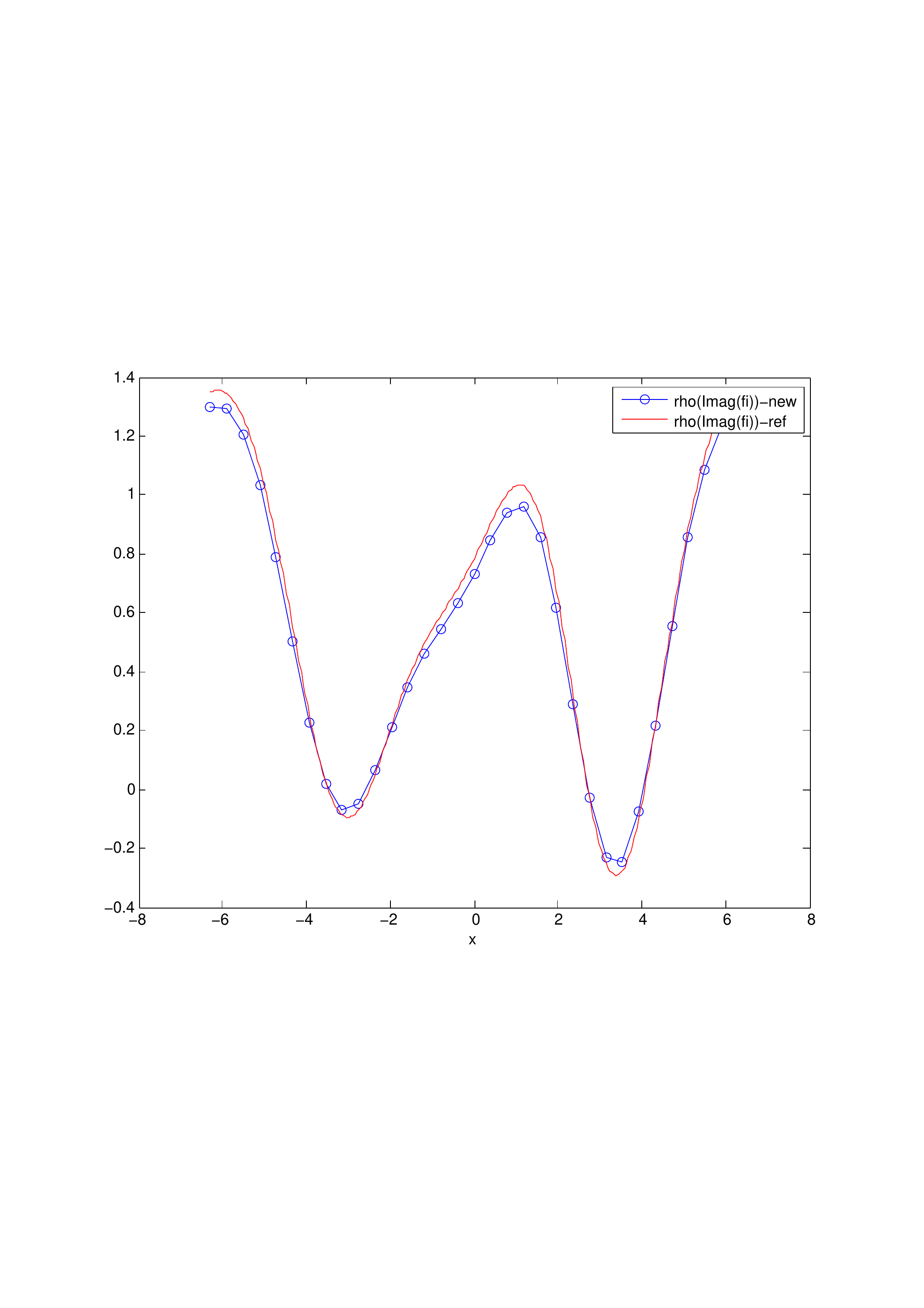}
\end{tabular}
\end{center}
\caption{$\varepsilon=1$. From top left to bottom right: space dependence of
$f^+$, $f^-$,  Re($f^i$) and Im$(f^i)$ at $p=0$, and space dependence of the densities
$\rho^+$, $\rho^-$, Re($\rho^i$) and Im$(\rho^i)$. }
\label{fig1_eps1}
\end{figure}

\newpage

\begin{figure}
\vspace{-2cm}
\begin{center}
\begin{tabular}{ccll}
\includegraphics[width=0.4\linewidth]{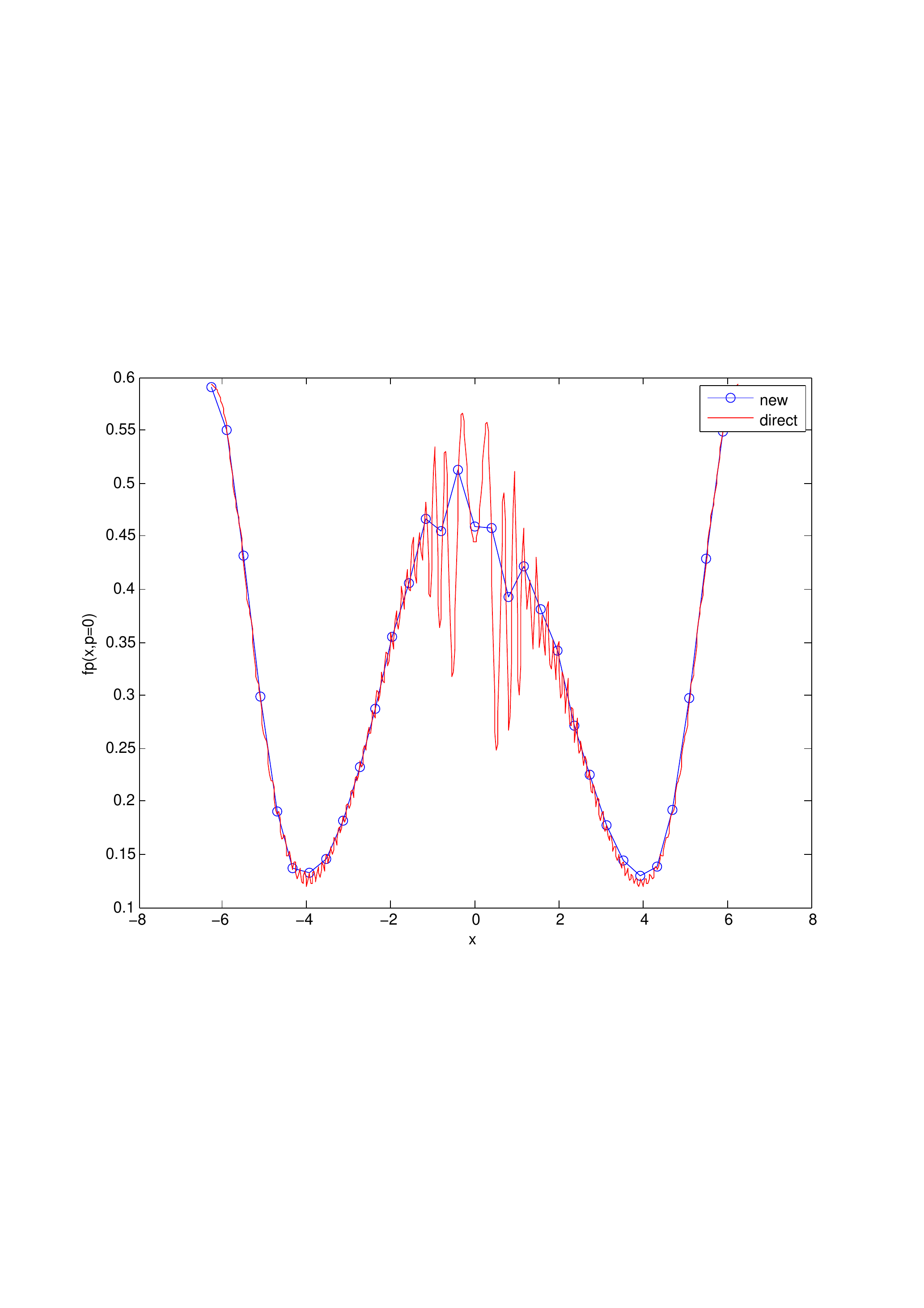}&
\hspace{-1.2cm}
\vspace{-3.4cm}
\includegraphics[width=0.4\linewidth]{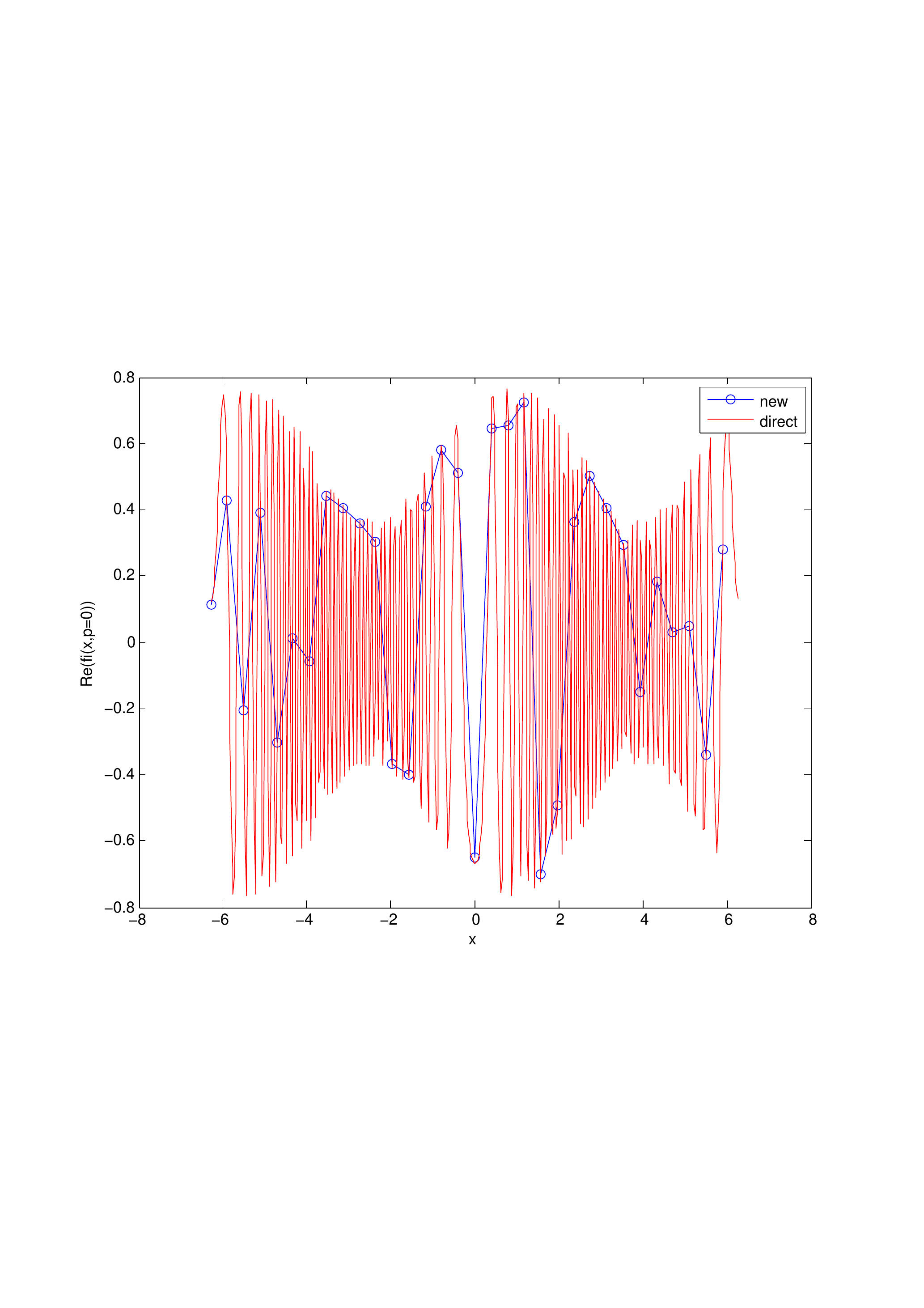}&
\hspace{-1.2cm}
\includegraphics[width=0.4\linewidth]{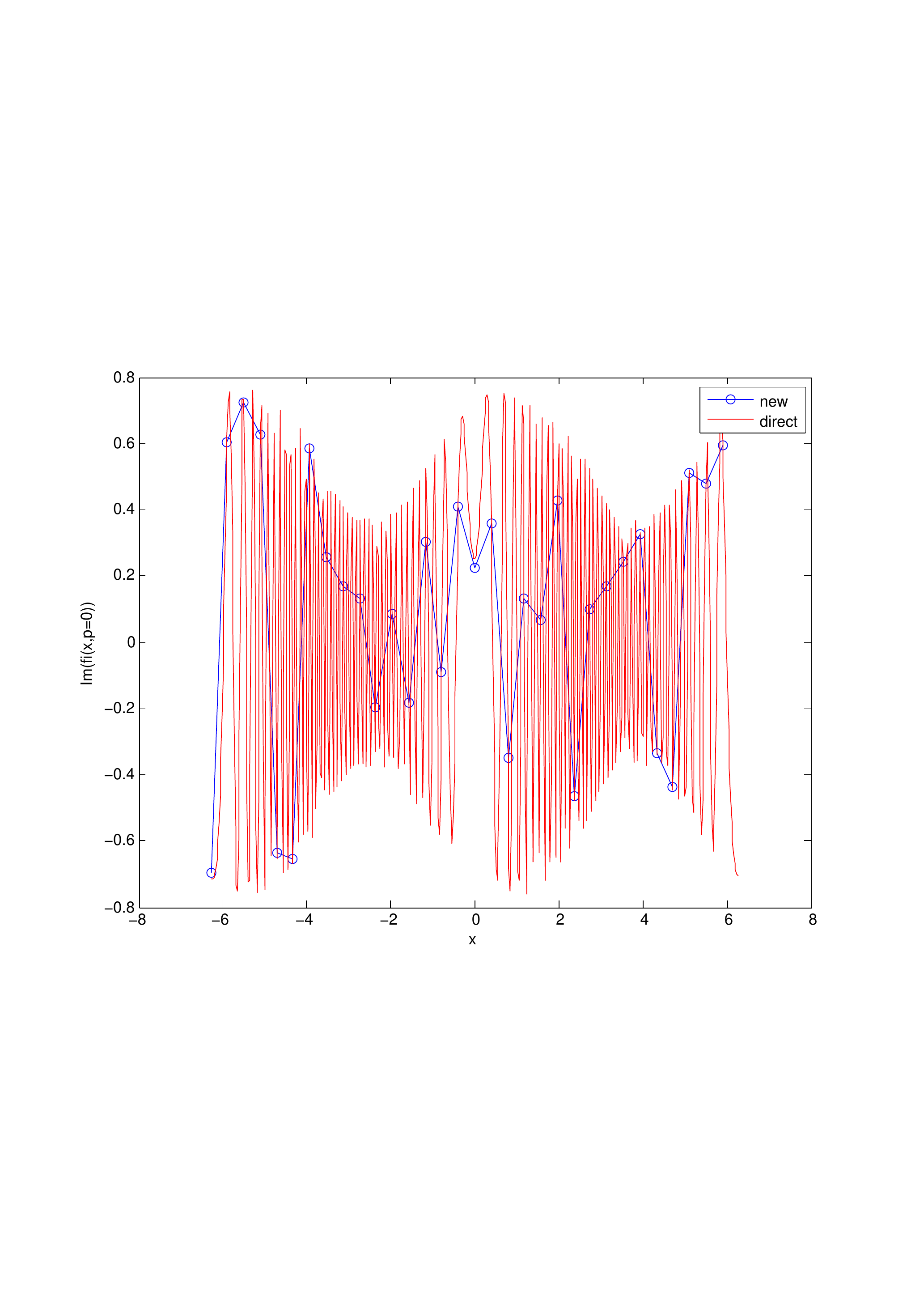}\\
\includegraphics[width=0.4\linewidth]{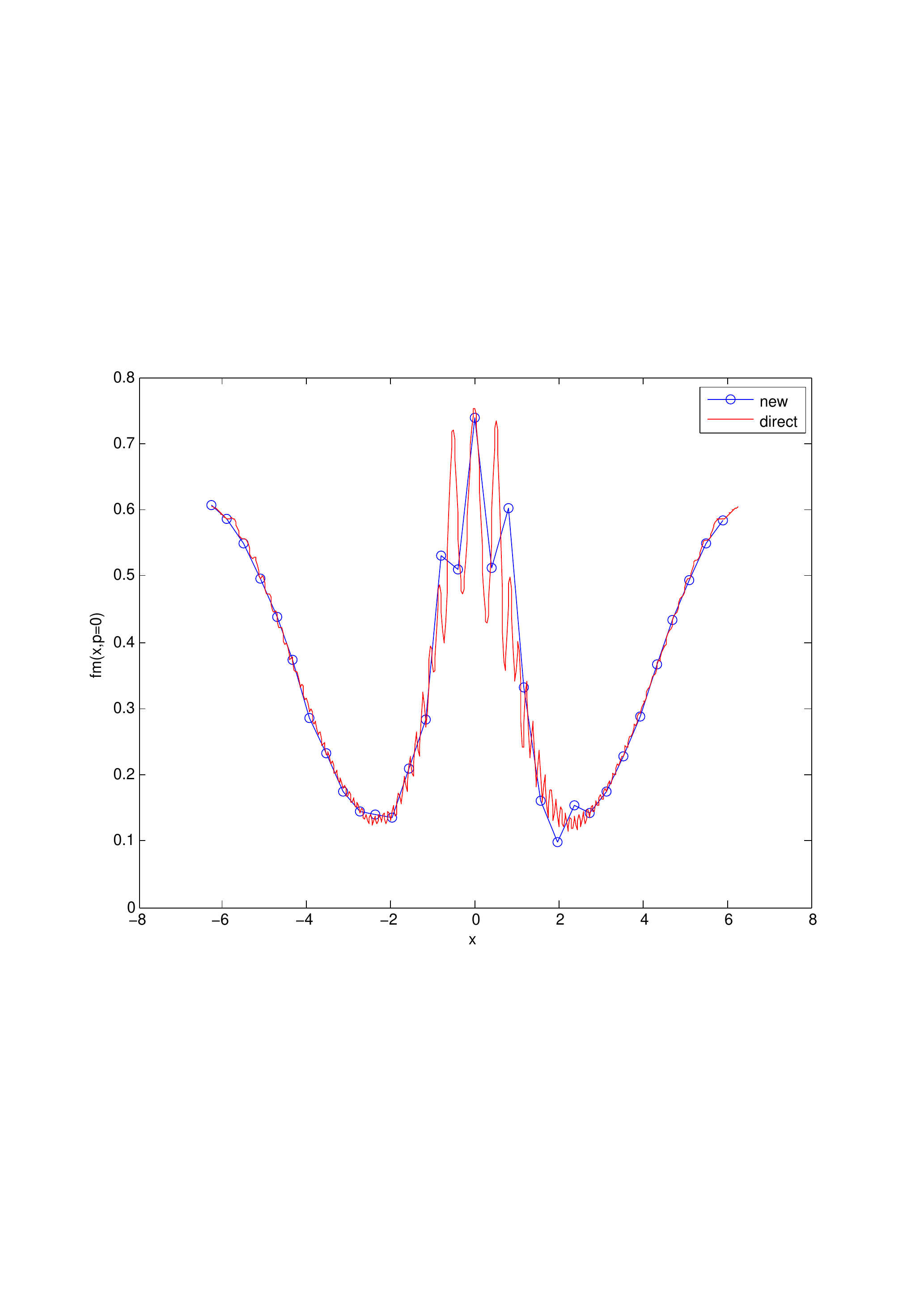}&
\hspace{-1.2cm}
\includegraphics[width=0.4\linewidth]{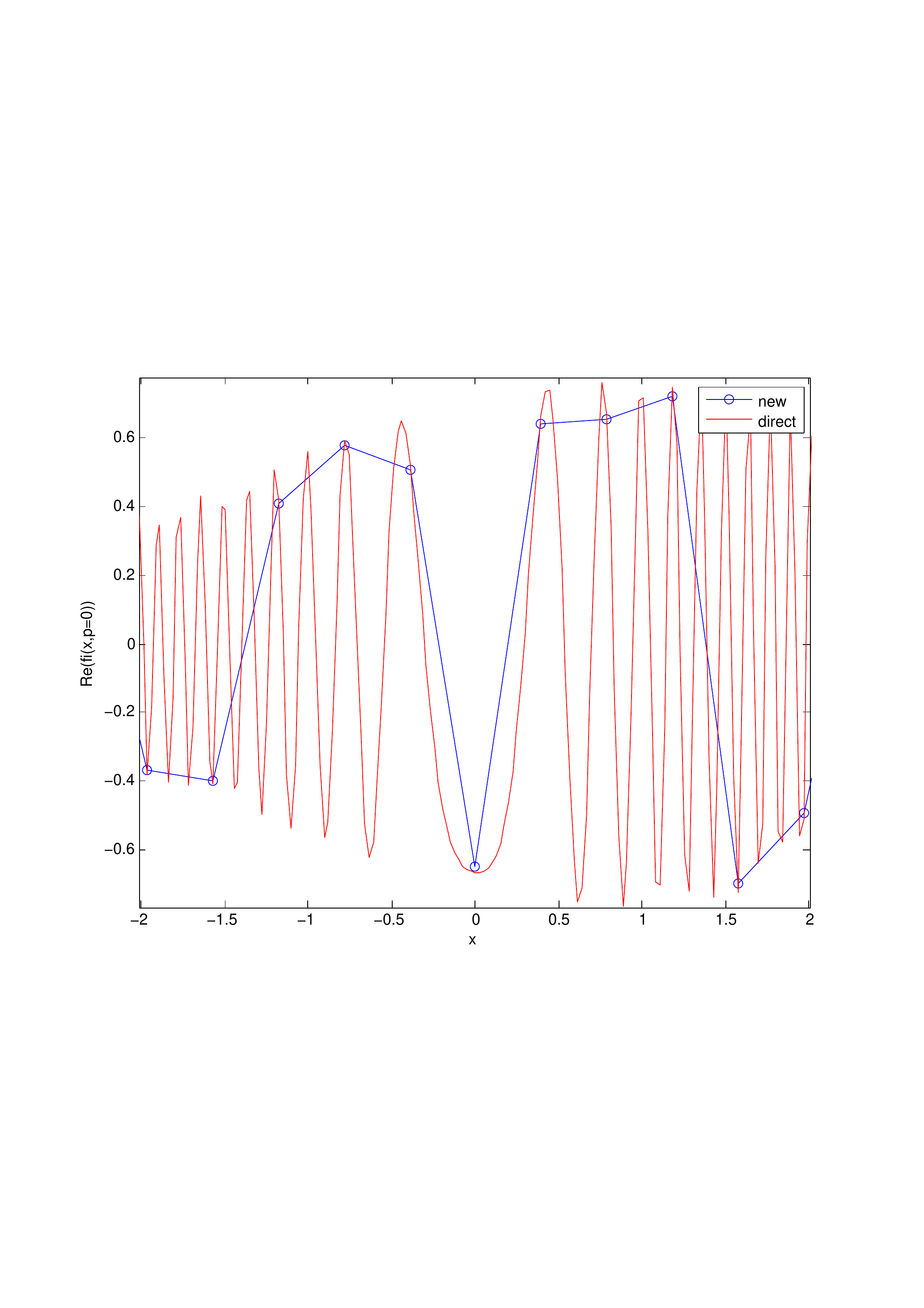}&
\hspace{-1.2cm}
\vspace{-2.3cm}
\includegraphics[width=0.4\linewidth]{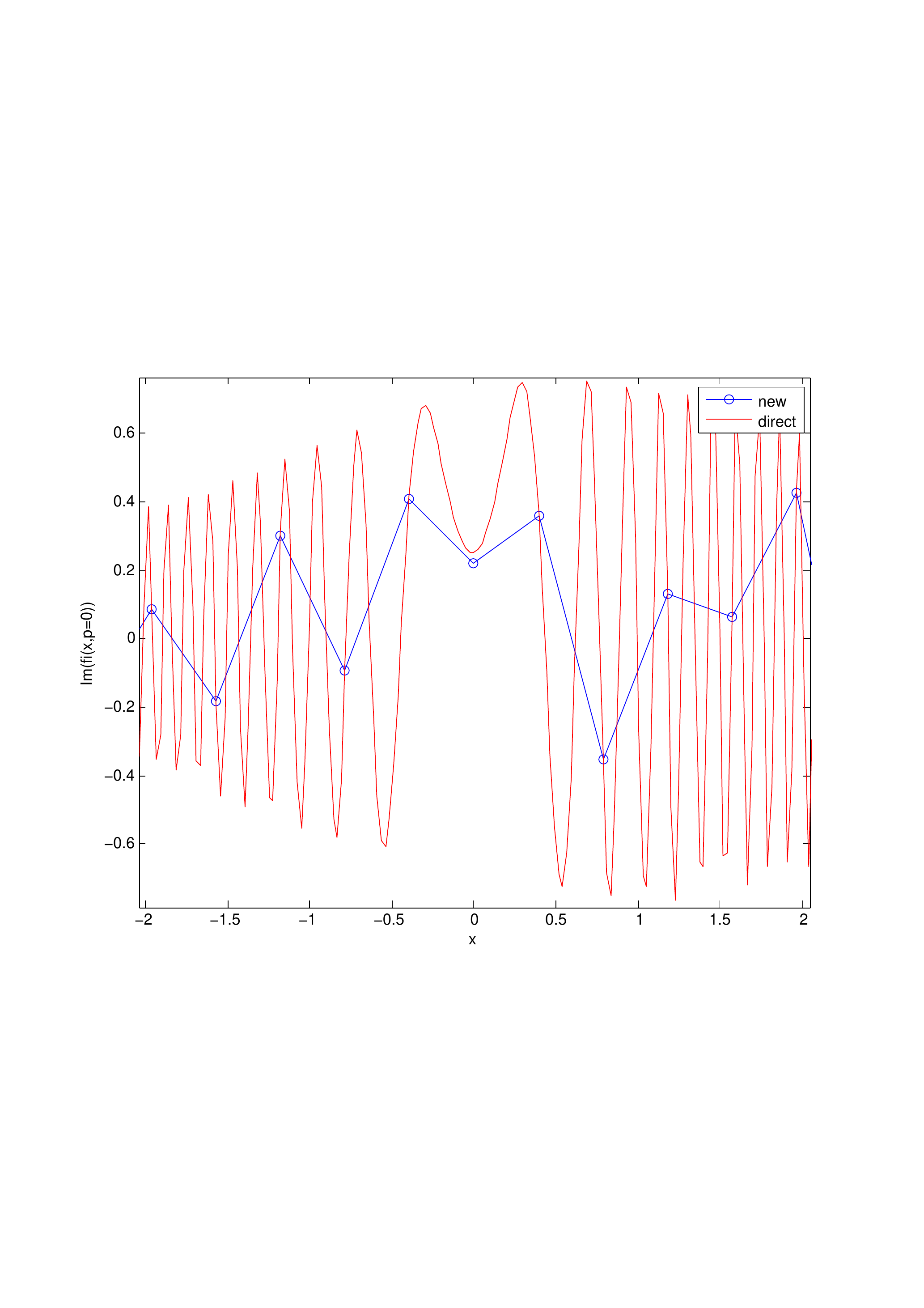}
\end{tabular}
\end{center}
\caption{$\varepsilon=1/32$. First line: space dependence of $f^+$, Re($f^i$) and Im$(f^i)$ at $p=0$.
Second line: space dependence of $f^-$, Re($f^i$) (zoom) and Im$(f^i)$ (zoom) at $p=0$.  }
\label{fig1_eps1s32}
\end{figure}

\begin{figure}
\vspace{-4cm}
\begin{center}
\begin{tabular}{ccll}
\includegraphics[width=0.4\linewidth]{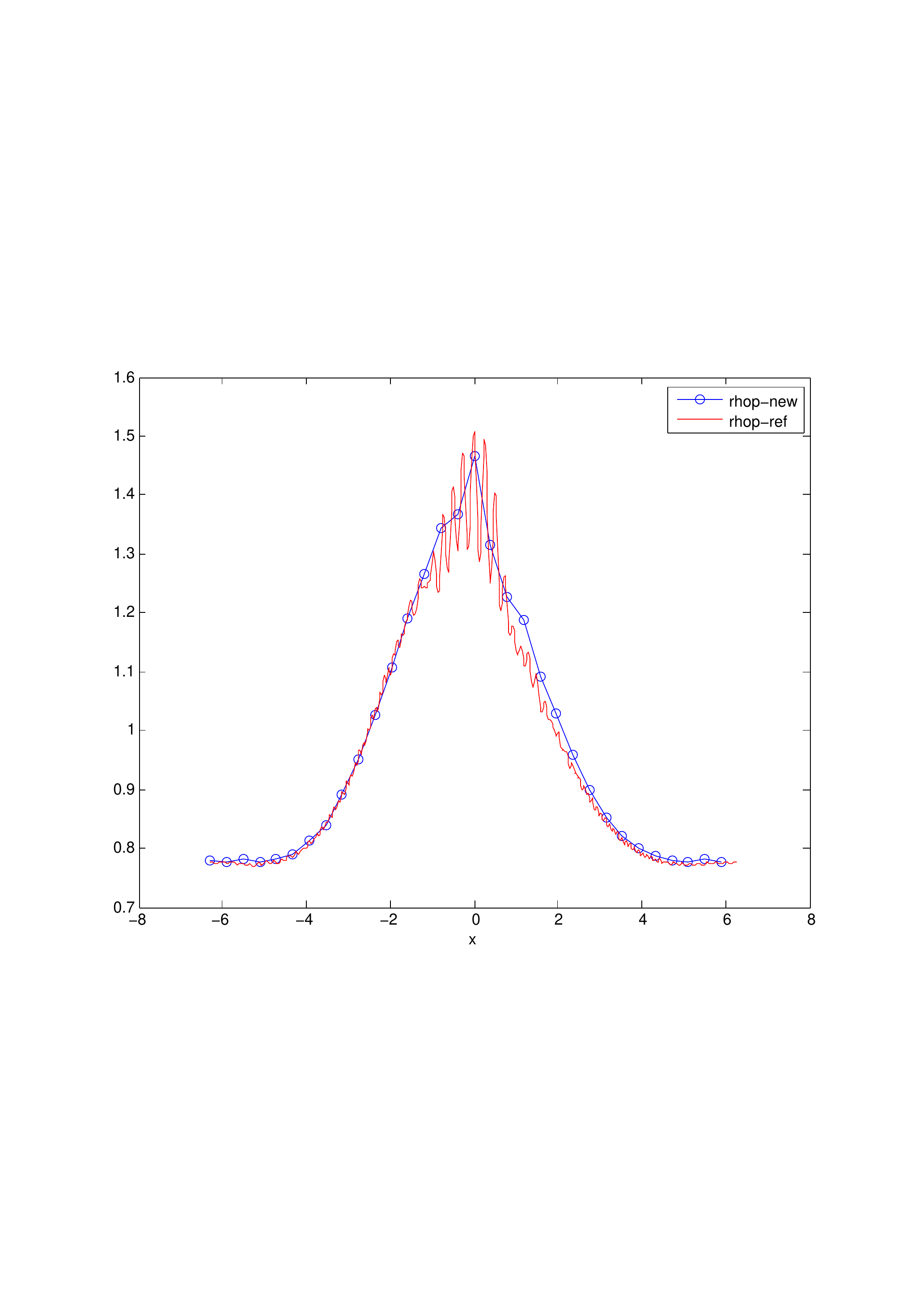}&
\hspace{-1.2cm}
\vspace{-3.4cm}
\includegraphics[width=0.4\linewidth]{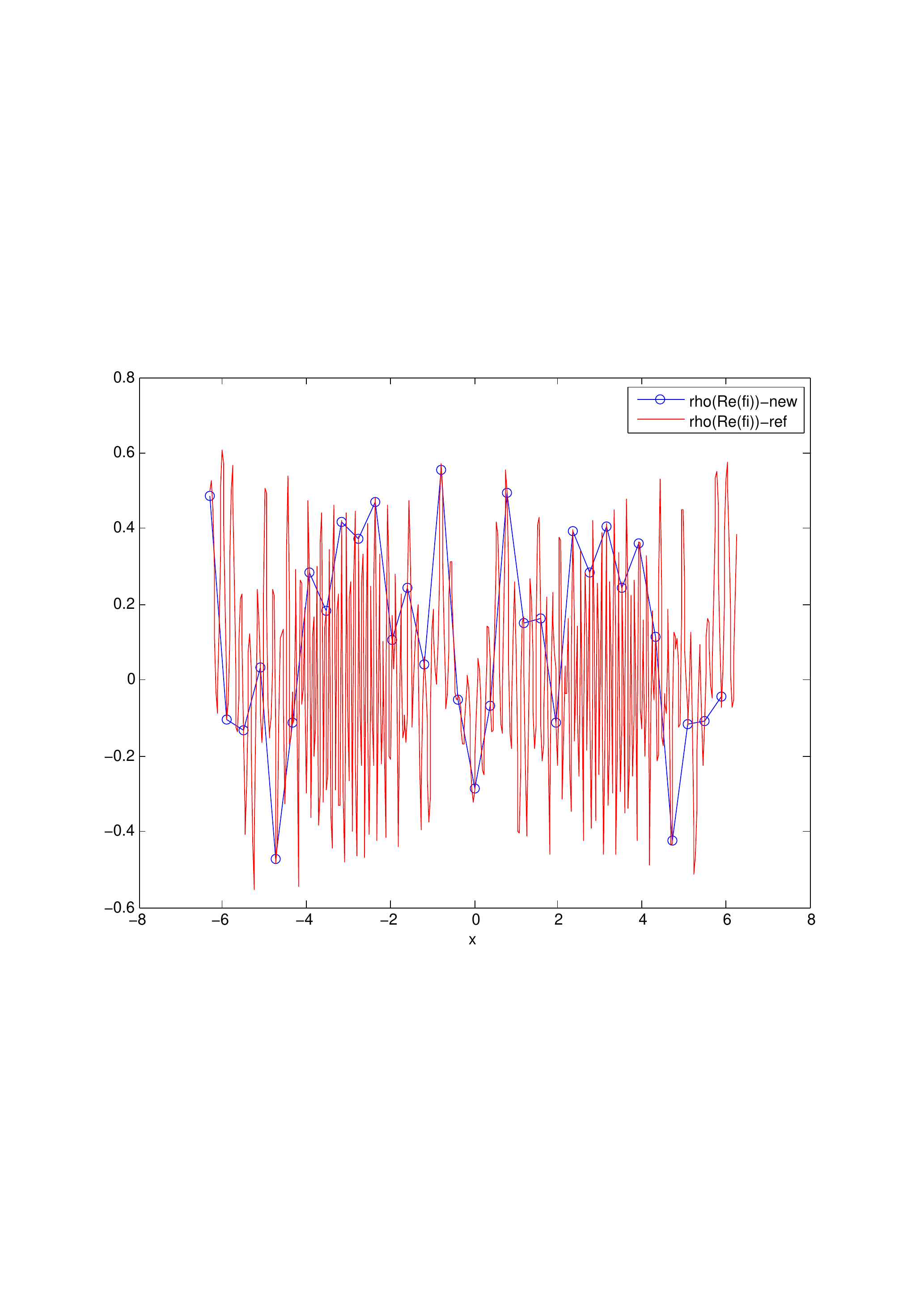}&
\hspace{-1.2cm}
\includegraphics[width=0.4\linewidth]{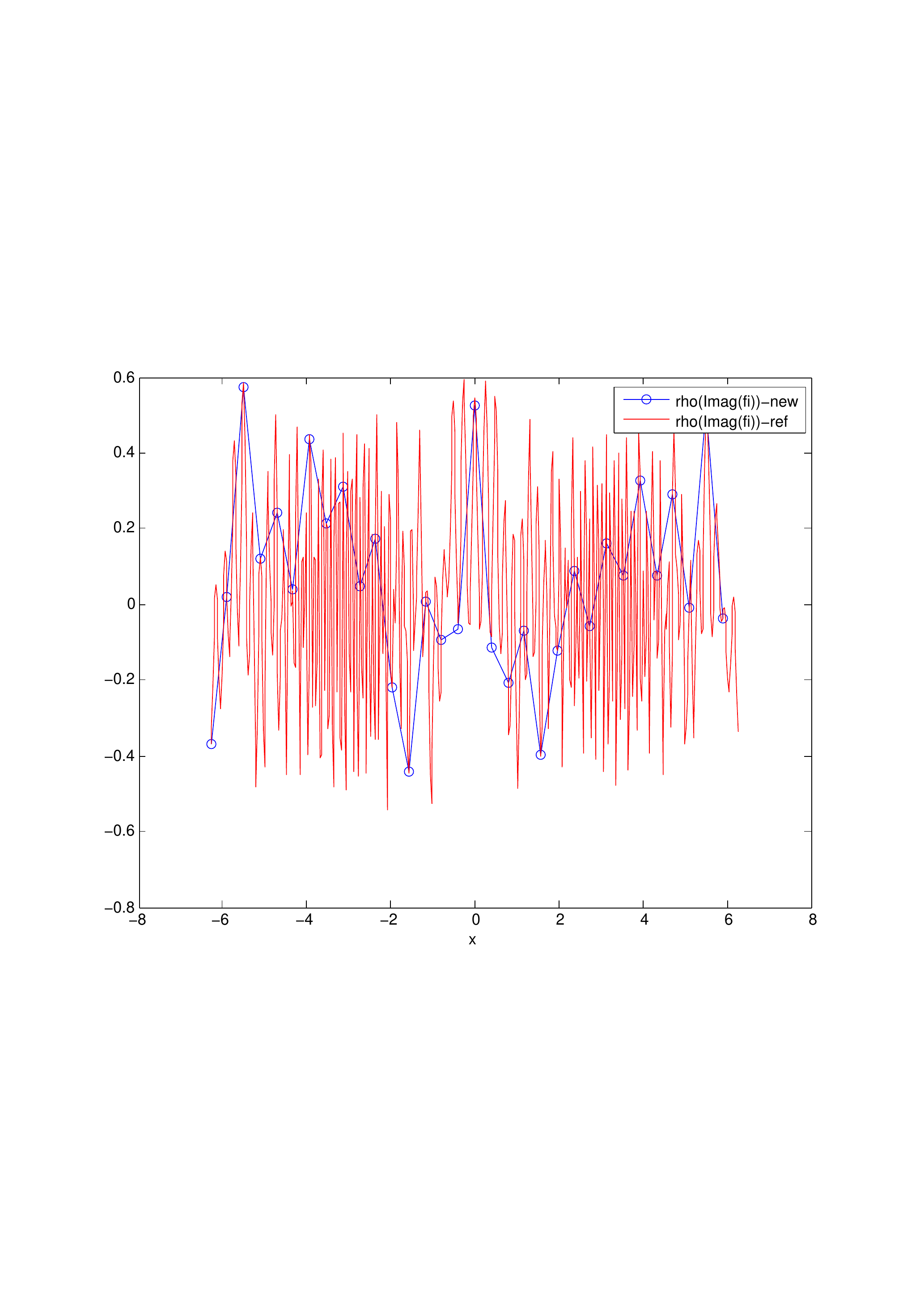}\\
\includegraphics[width=0.4\linewidth]{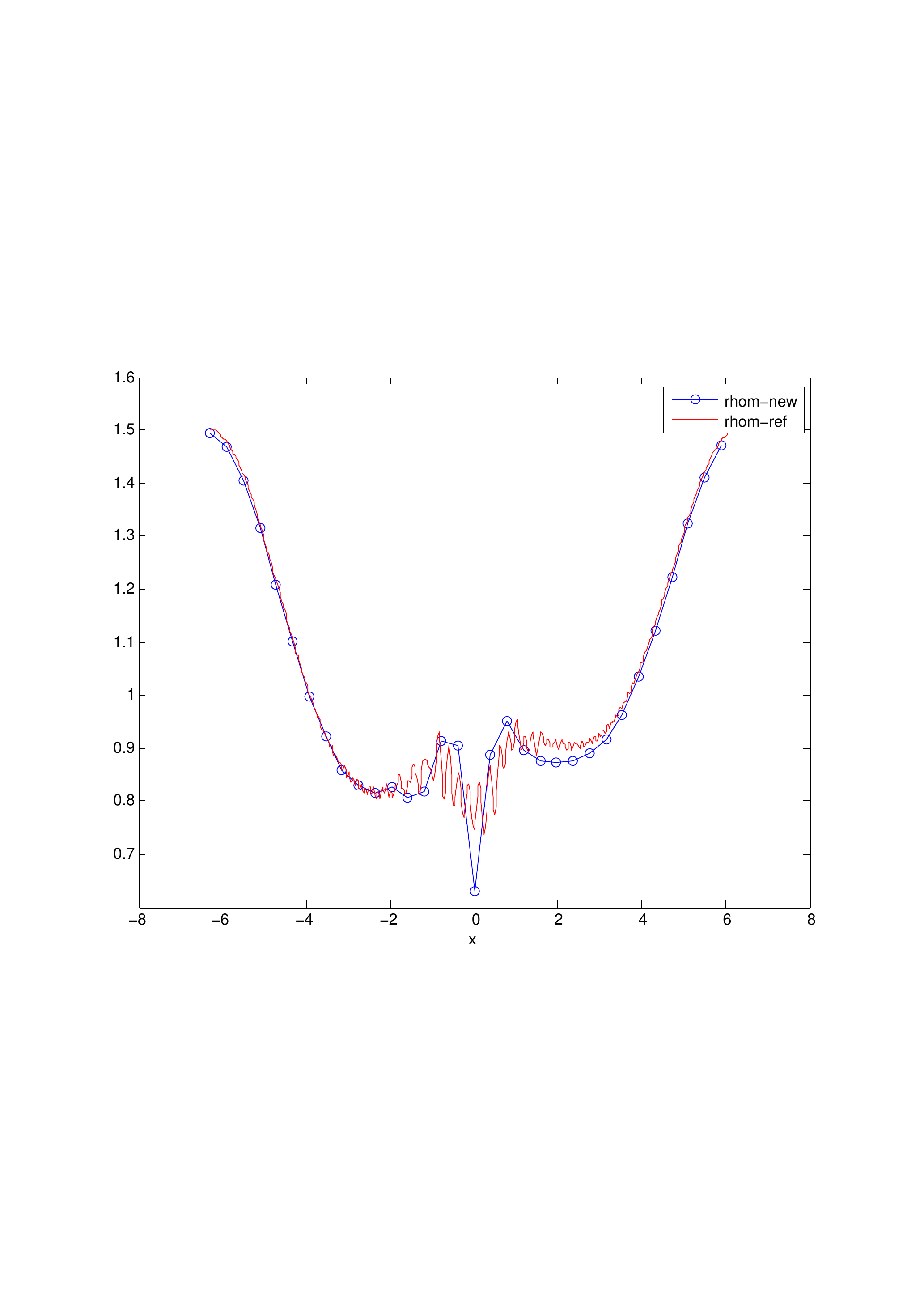}&
\hspace{-1.2cm}
\includegraphics[width=0.4\linewidth]{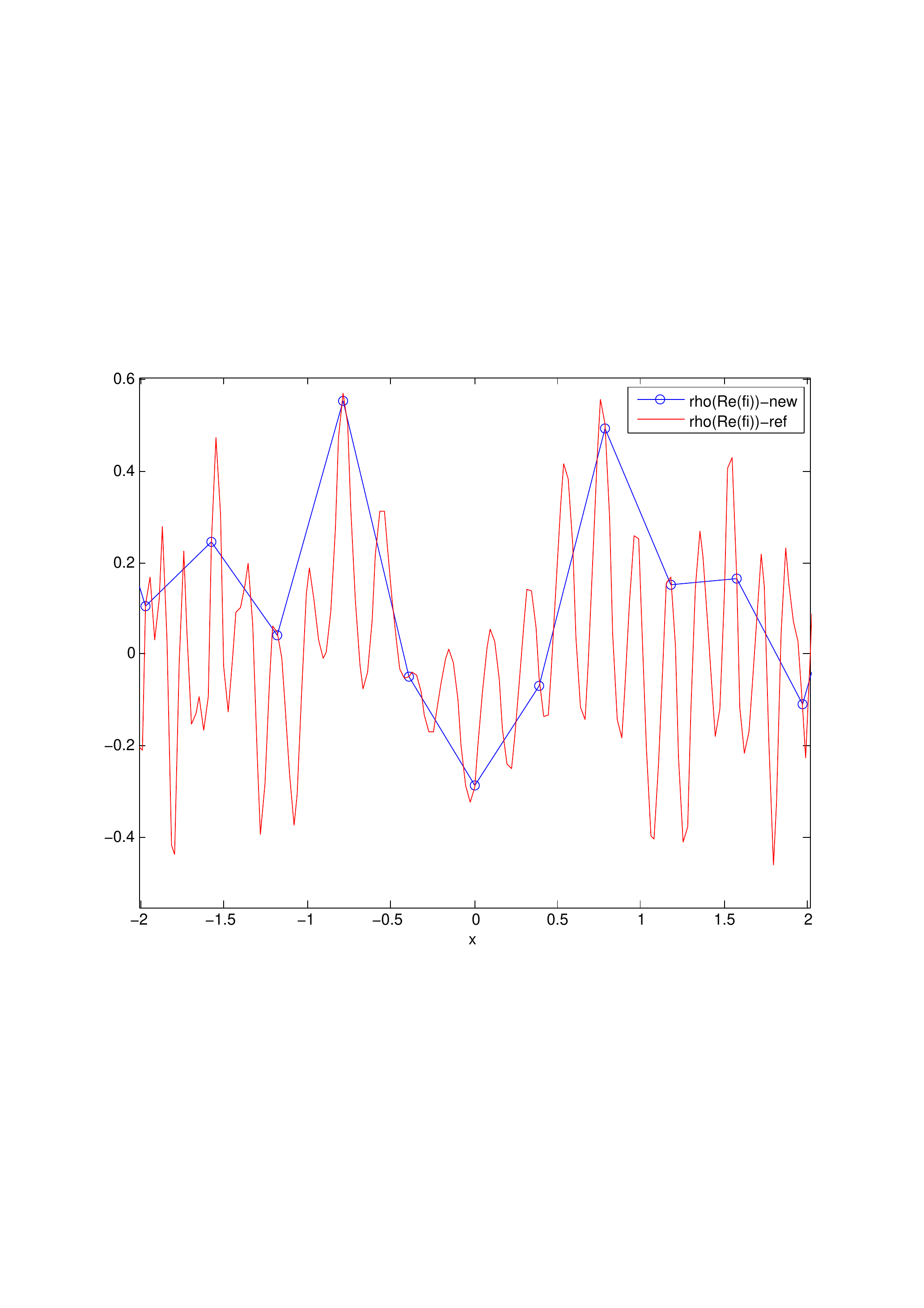}&
\hspace{-1.2cm}
\vspace{-2.3cm}
\includegraphics[width=0.4\linewidth]{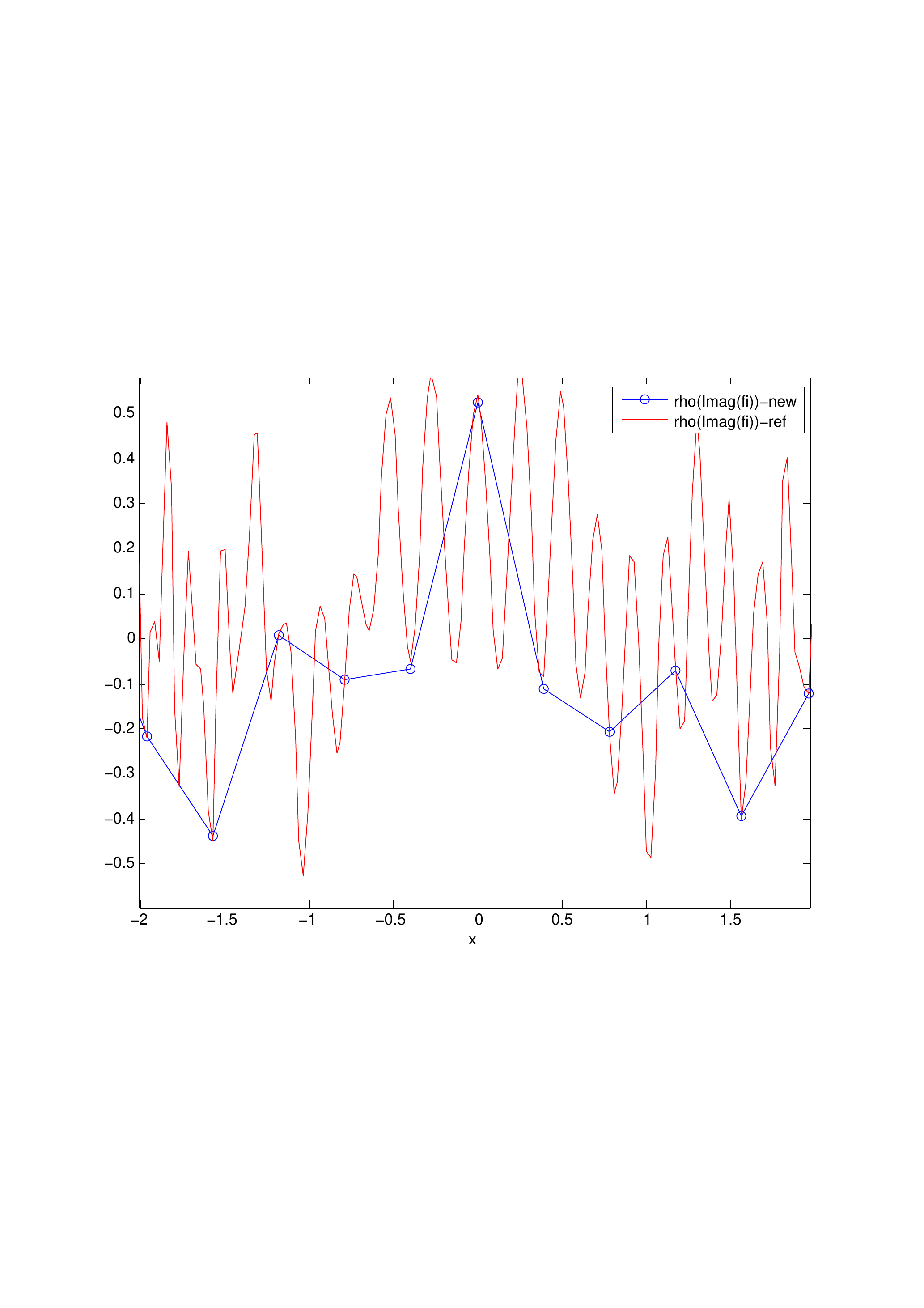}
\end{tabular}
\end{center}
\caption{$\varepsilon=1/32$. First line: space dependence of the densities $\rho^+$, Re($\rho^i$) and Im$(\rho^i)$.
Second line: space dependence of $\rho^-$, Re($\rho^i$) (zoom) and Im$(\rho^i)$ (zoom).  }
\label{fig2_eps1s32}
\end{figure}

\newpage

\begin{figure}
\vspace{-2cm}
\begin{center}
\begin{tabular}{ccll}
\includegraphics[width=0.4\linewidth]{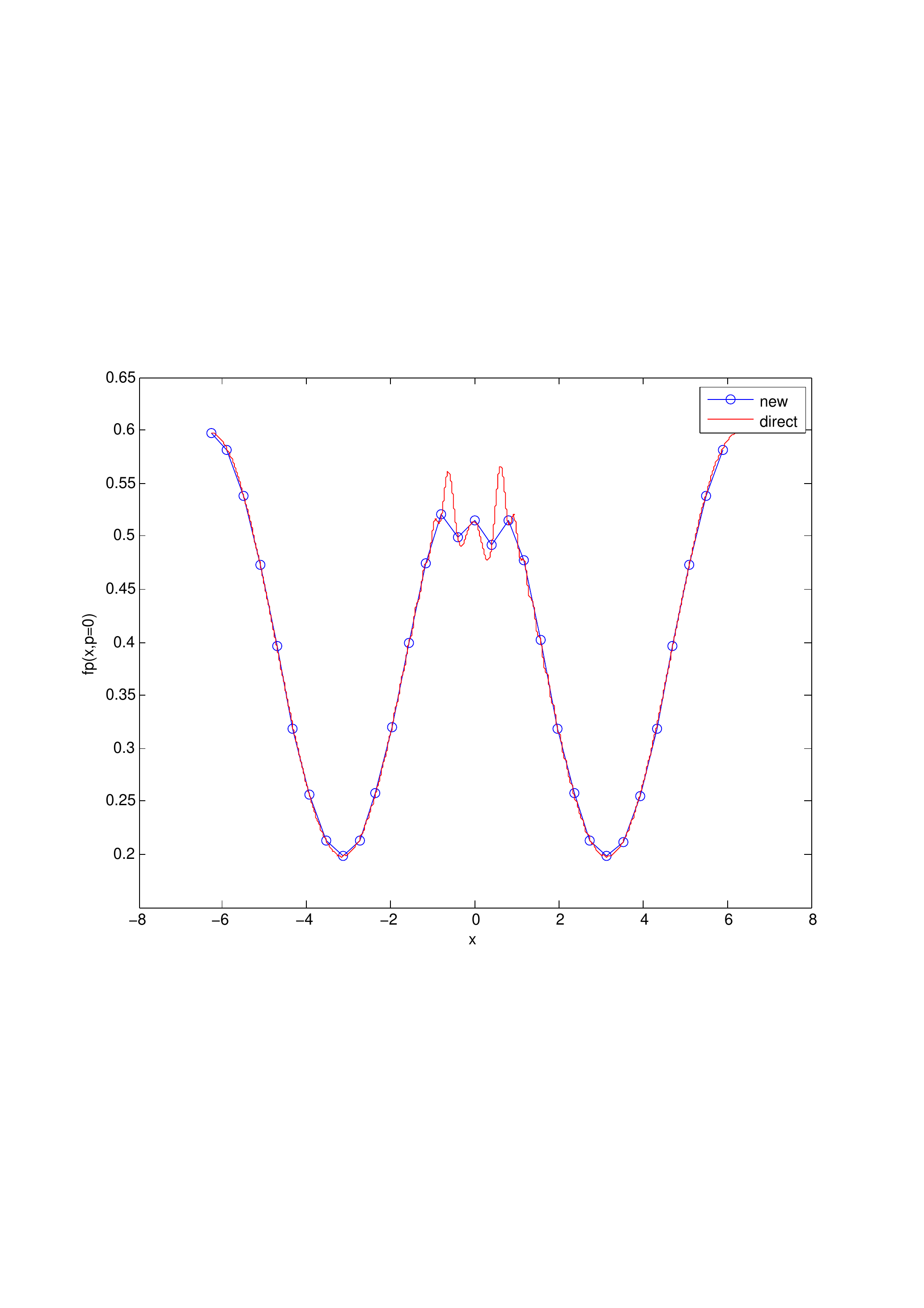}&
\hspace{-1.2cm}
\vspace{-3.4cm}
\includegraphics[width=0.4\linewidth]{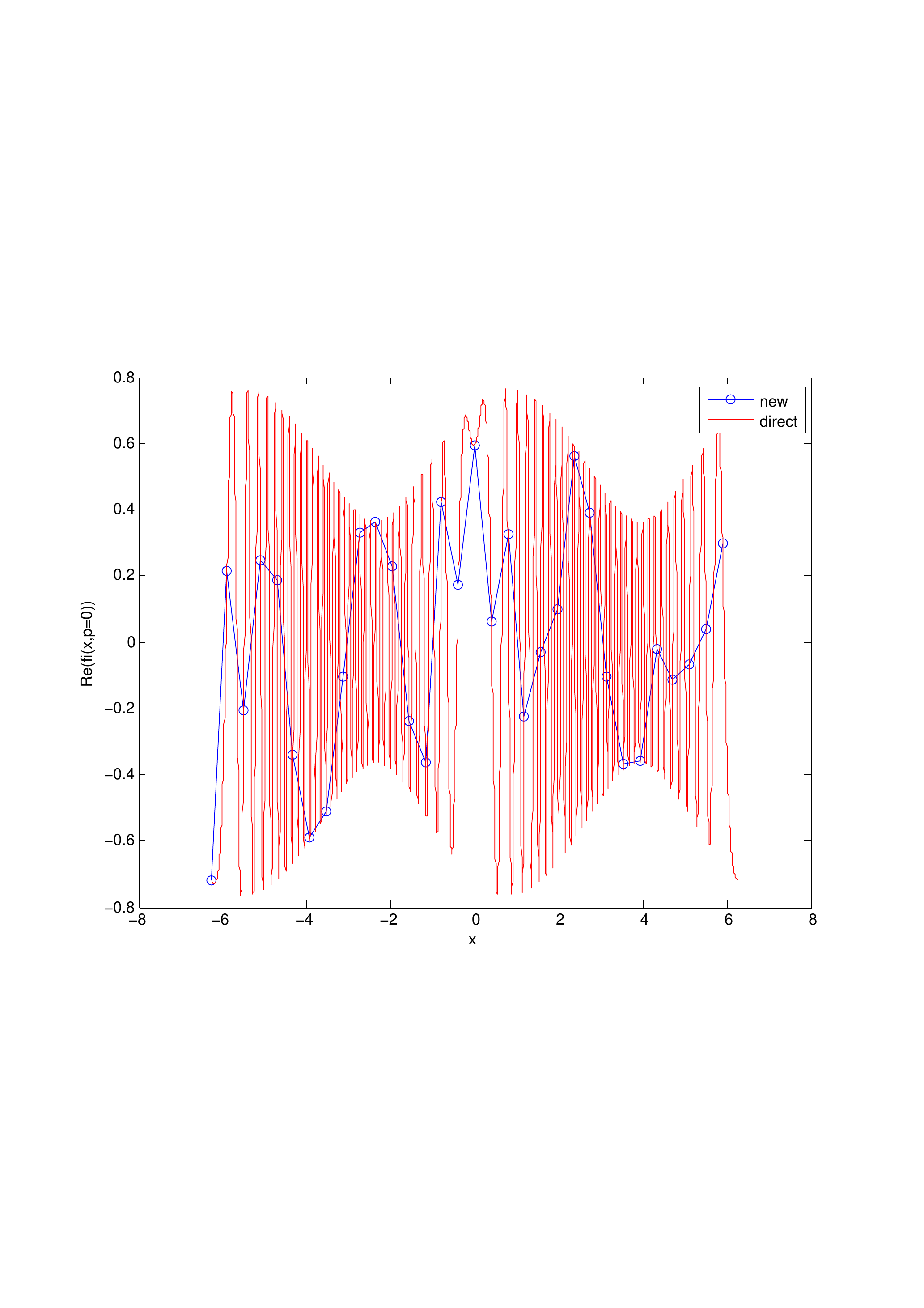}&
\hspace{-1.2cm}
\includegraphics[width=0.4\linewidth]{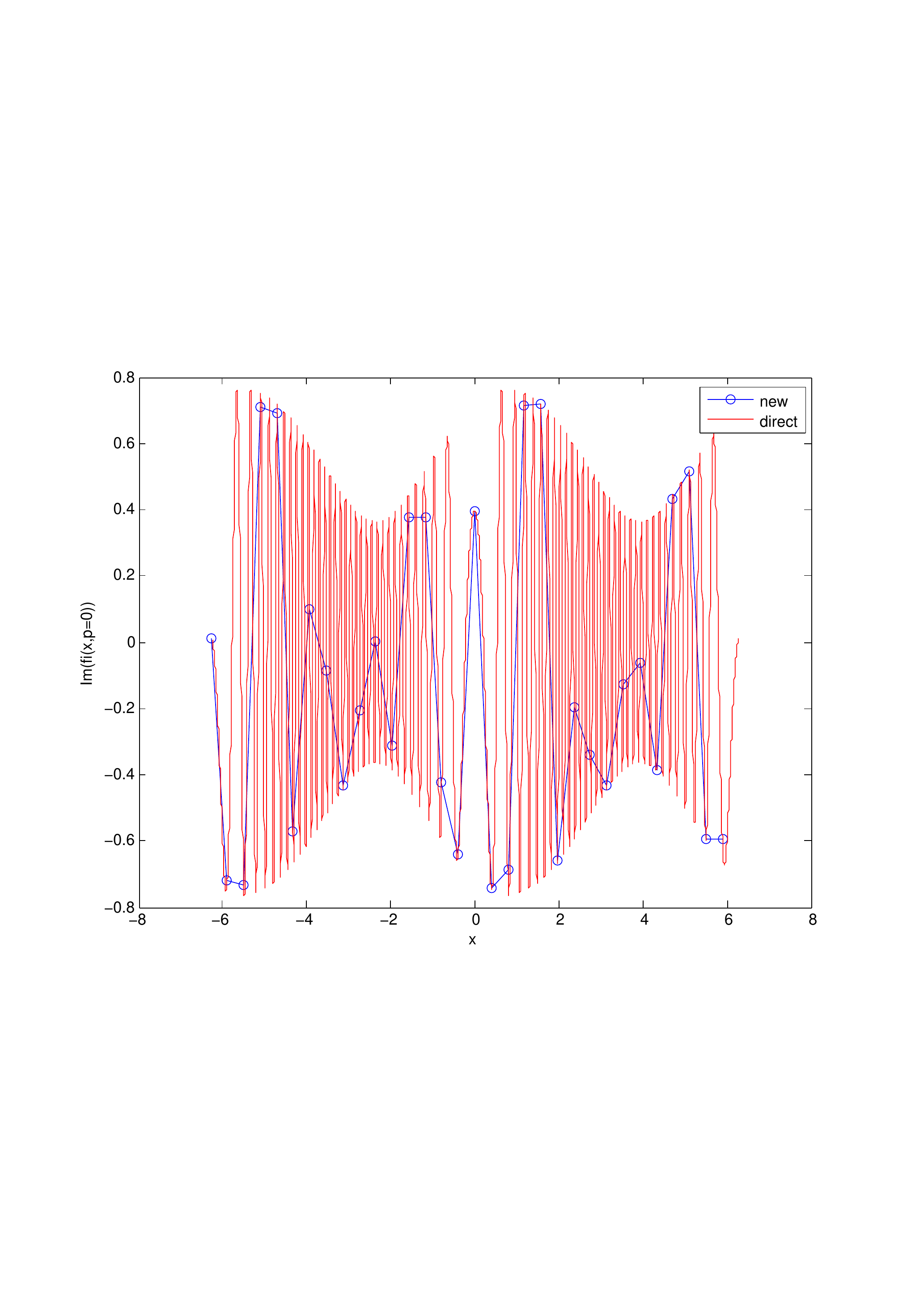}\\
\includegraphics[width=0.4\linewidth]{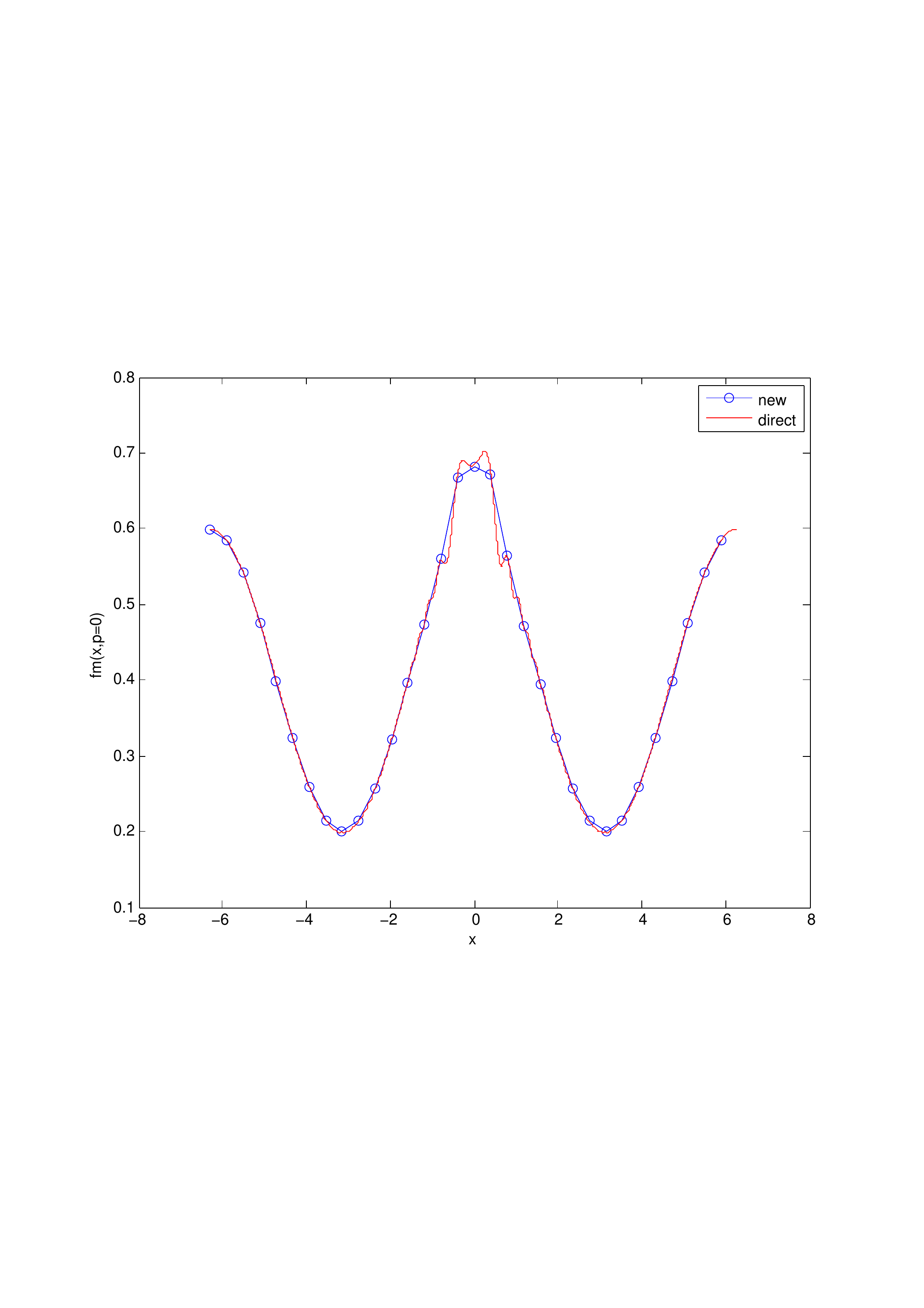}&
\hspace{-1.2cm}
\includegraphics[width=0.4\linewidth]{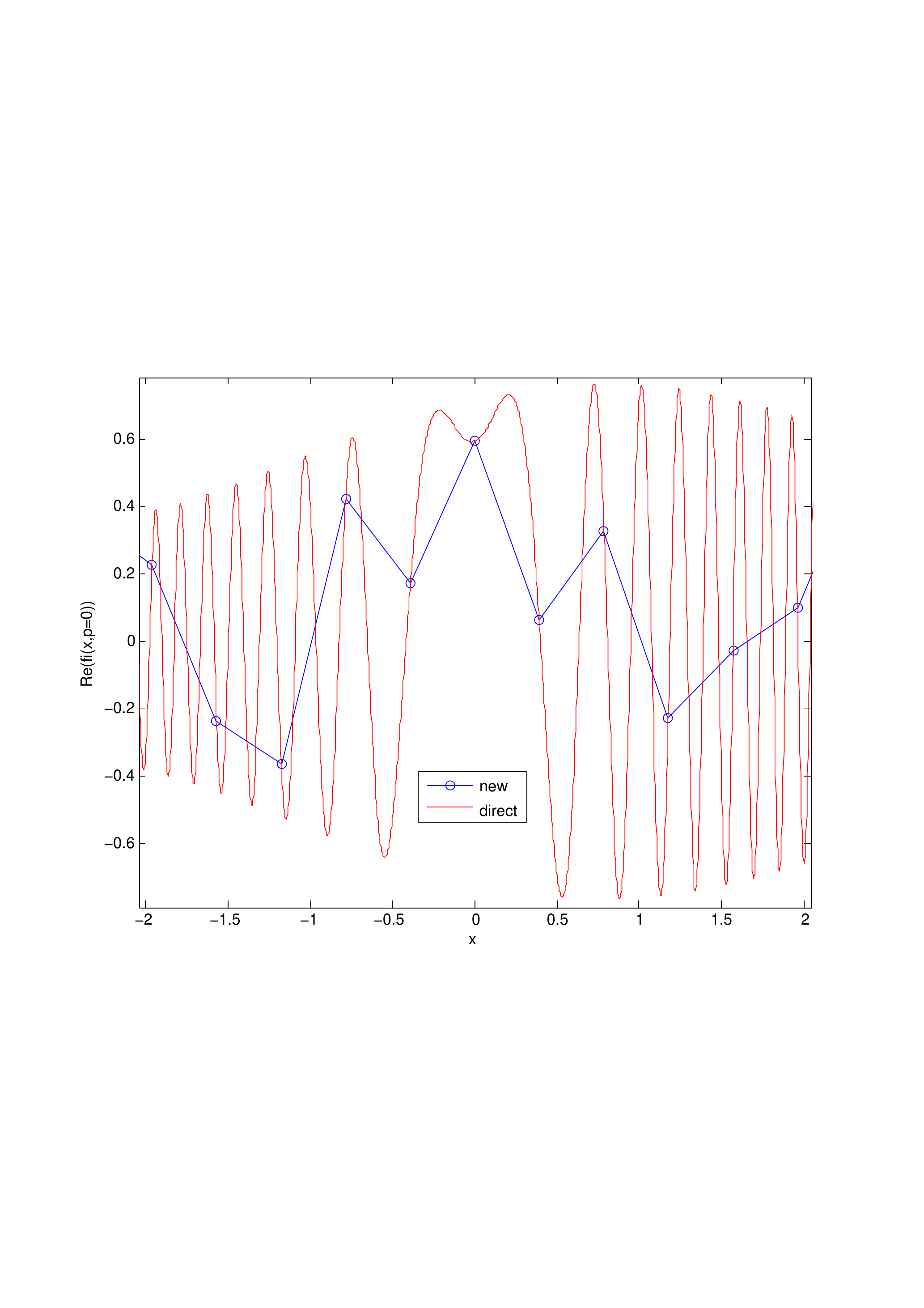}&
\hspace{-1.2cm}
\vspace{-2.3cm}
\includegraphics[width=0.4\linewidth]{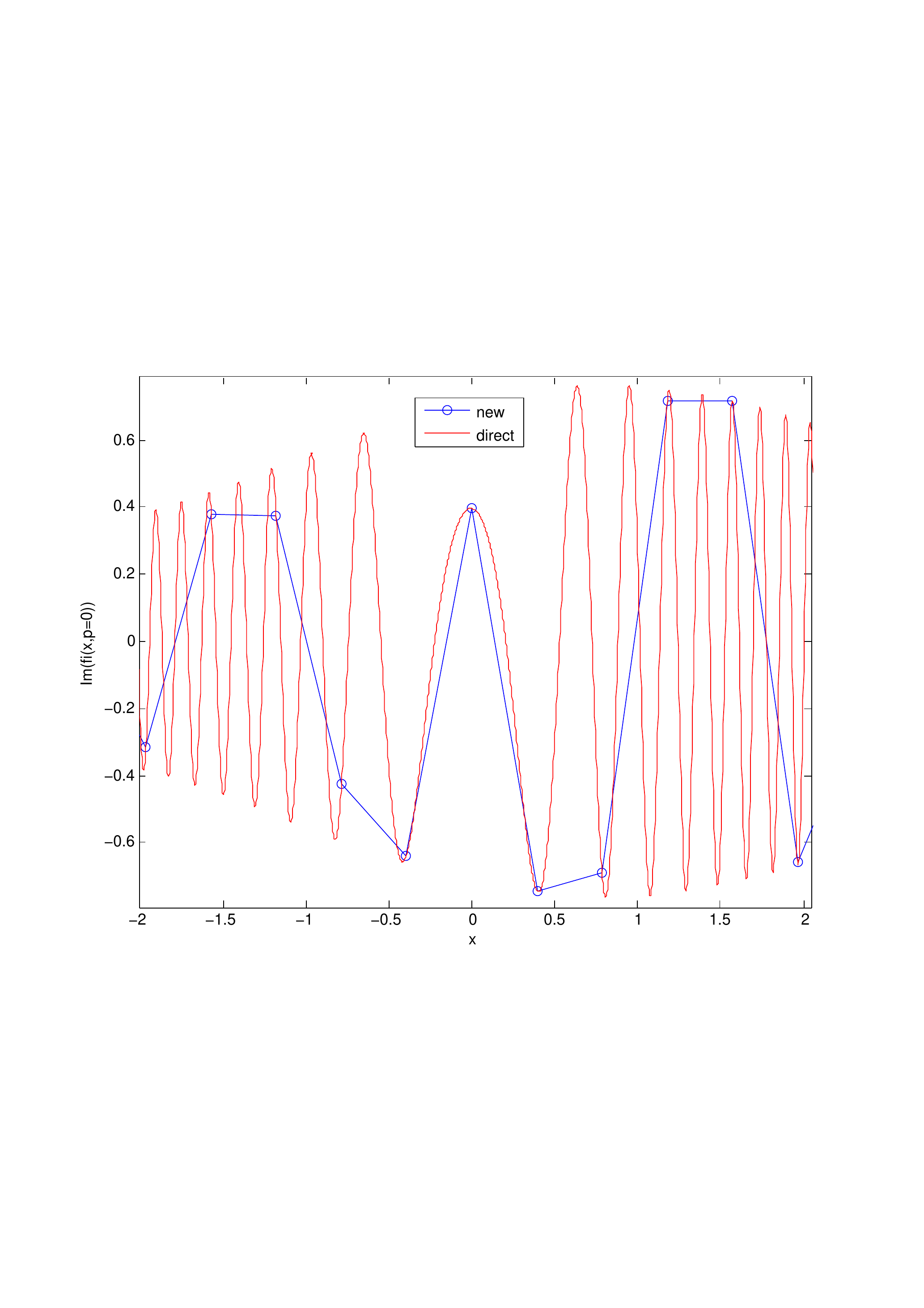}
\end{tabular}
\end{center}
\caption{$\varepsilon=1/256$. First line: space dependence of $f^+$, Re($f^i$) and Im$(f^i)$ at $p=0$.
Second line: space dependence of $f^-$, Re($f^i$) (zoom) and Im$(f^i)$ (zoom) at $p=0$.  }
\label{fig1_eps1s256}
\end{figure}

\begin{figure}
\vspace{-4cm}
\begin{center}
\begin{tabular}{ccll}
\includegraphics[width=0.4\linewidth]{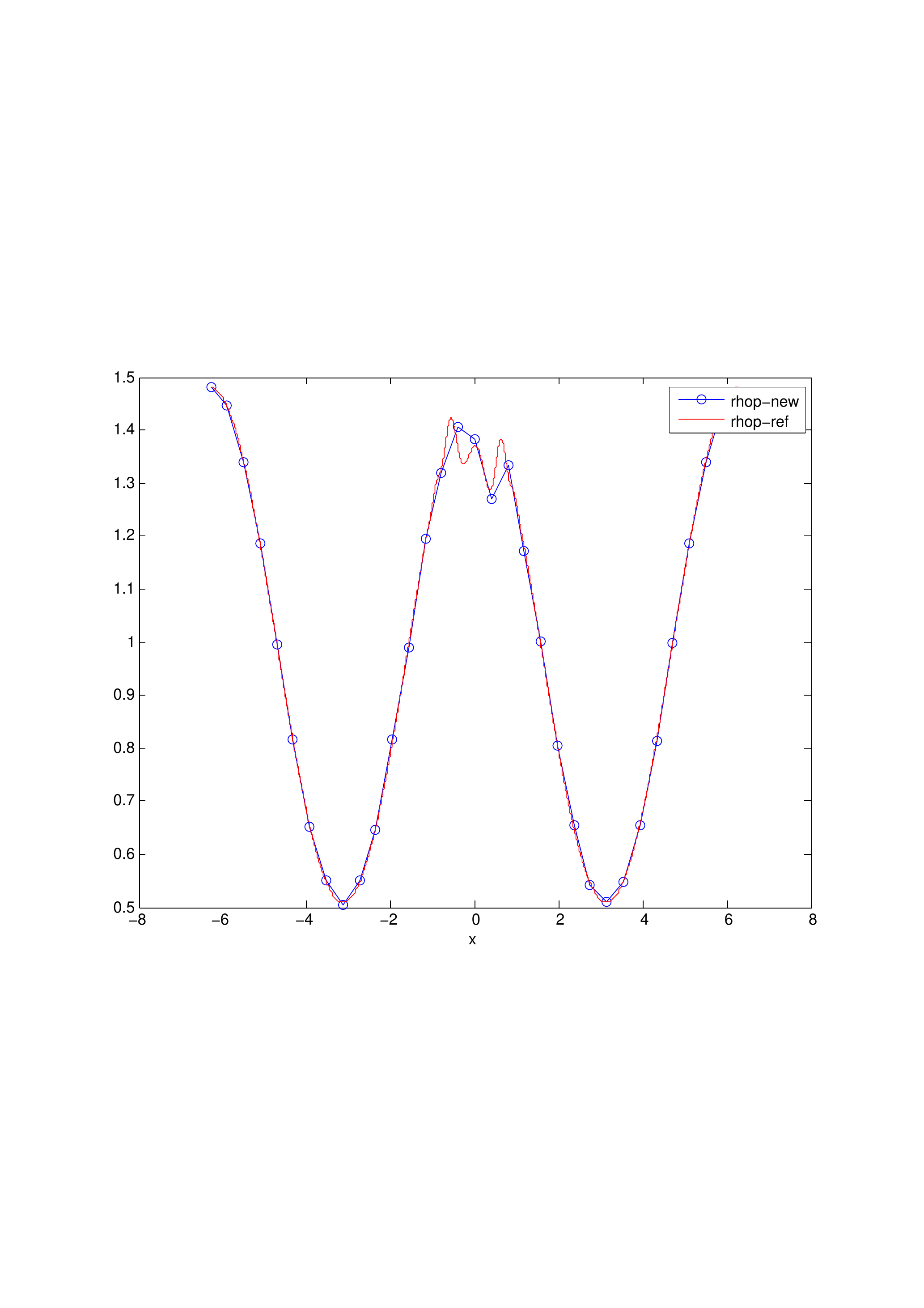}&
\hspace{-1.2cm}
\vspace{-3.4cm}
\includegraphics[width=0.4\linewidth]{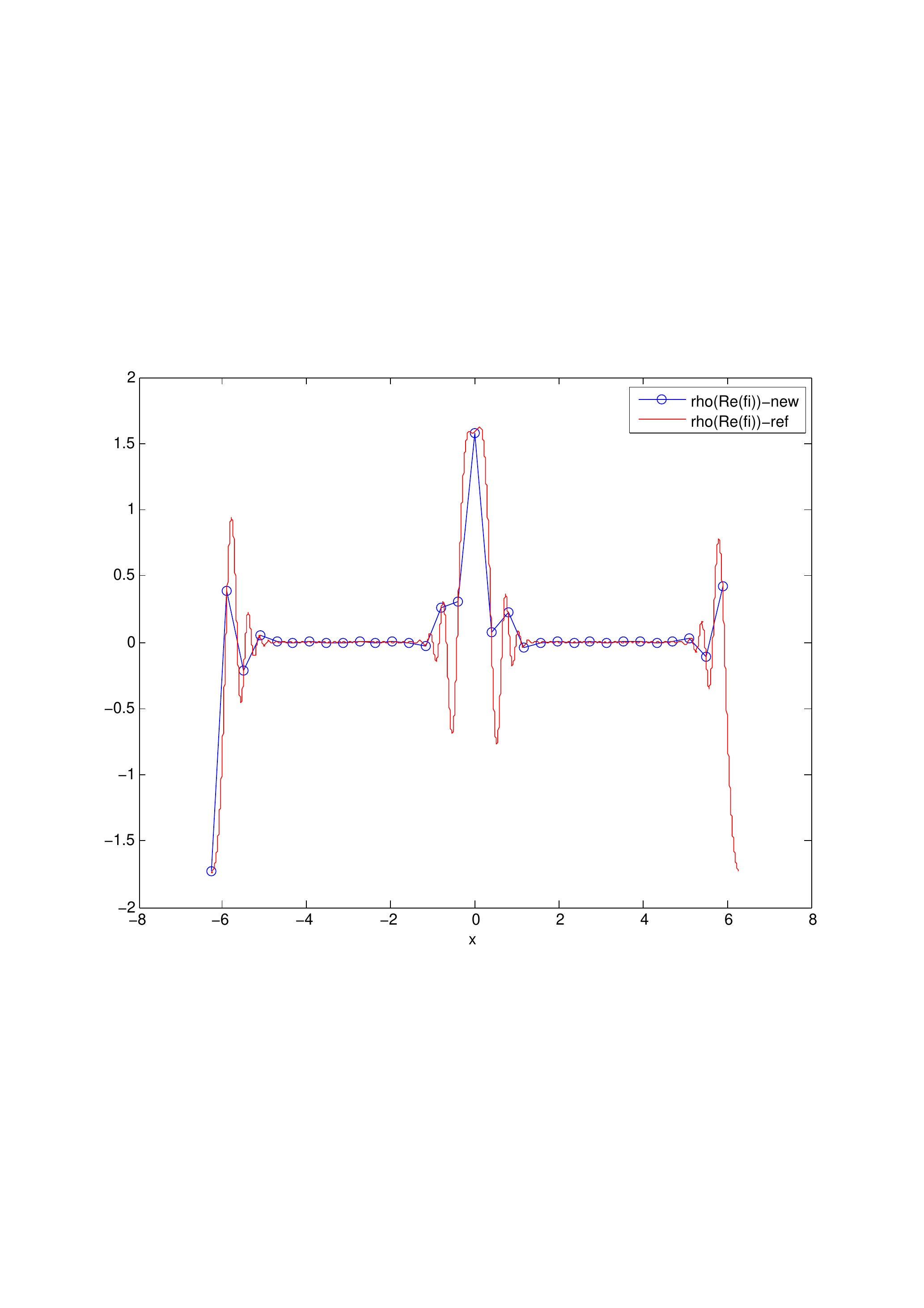}&
\hspace{-1.2cm}
\includegraphics[width=0.4\linewidth]{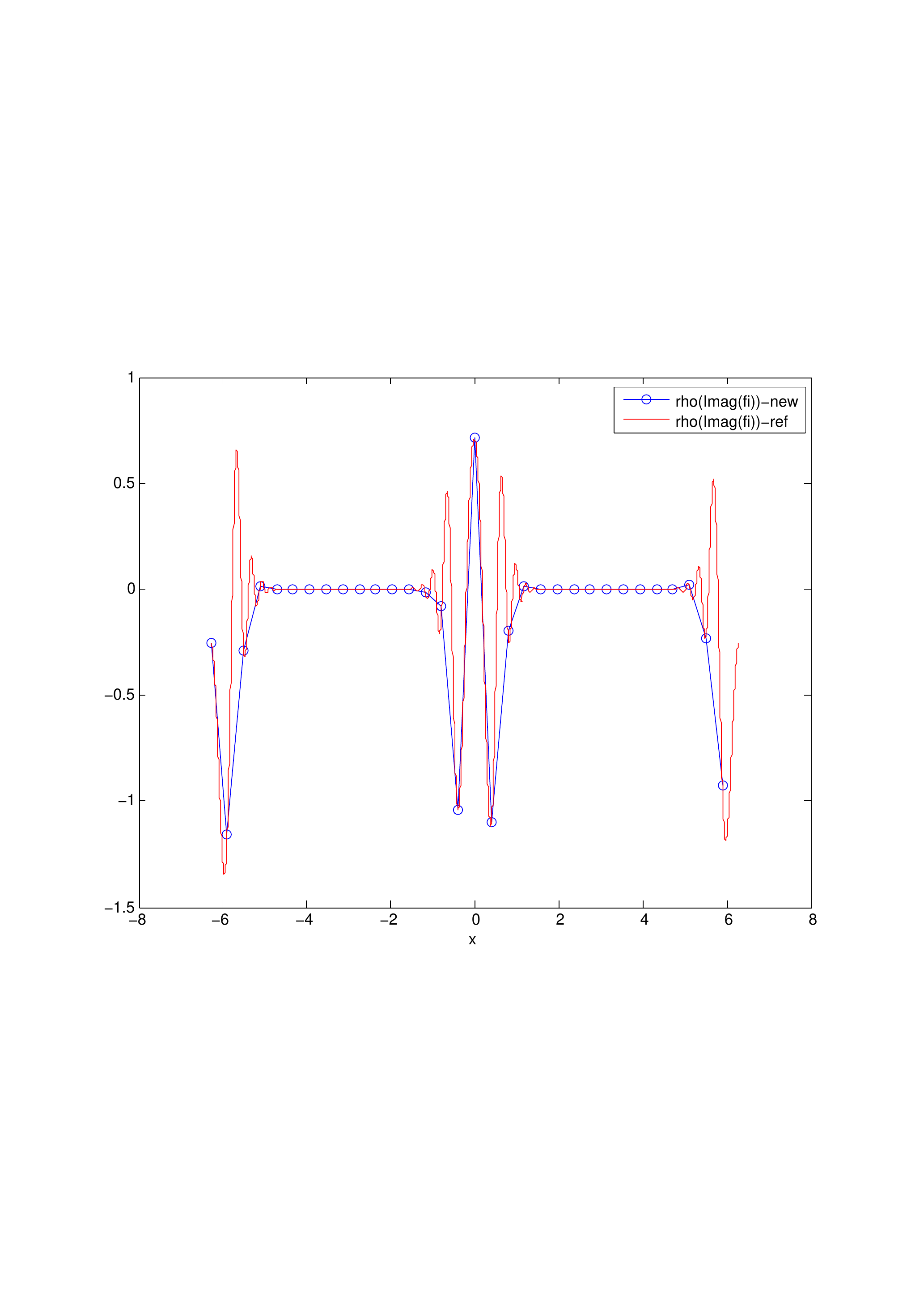}\\
\includegraphics[width=0.4\linewidth]{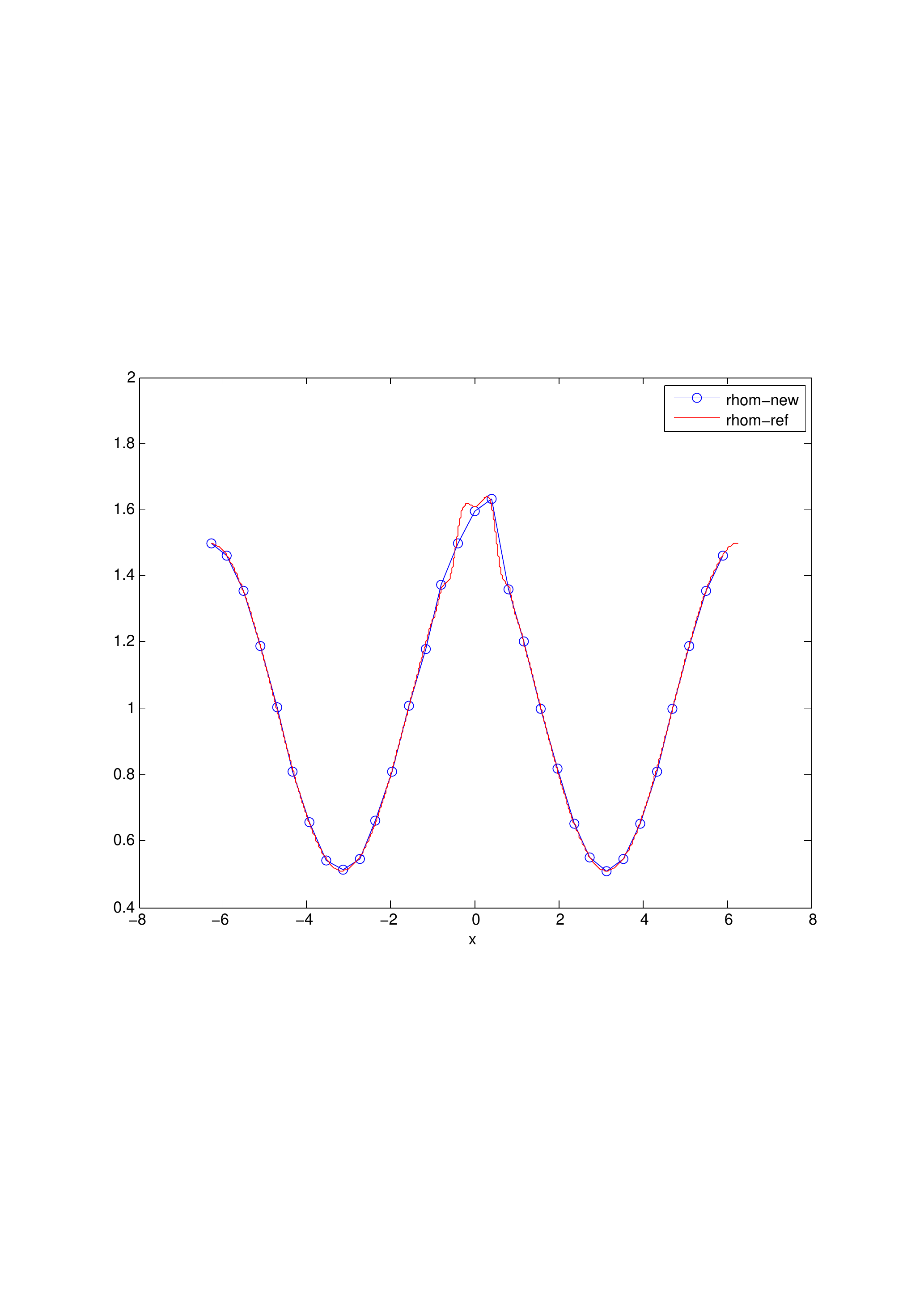}&
\hspace{-1.2cm}
\includegraphics[width=0.4\linewidth]{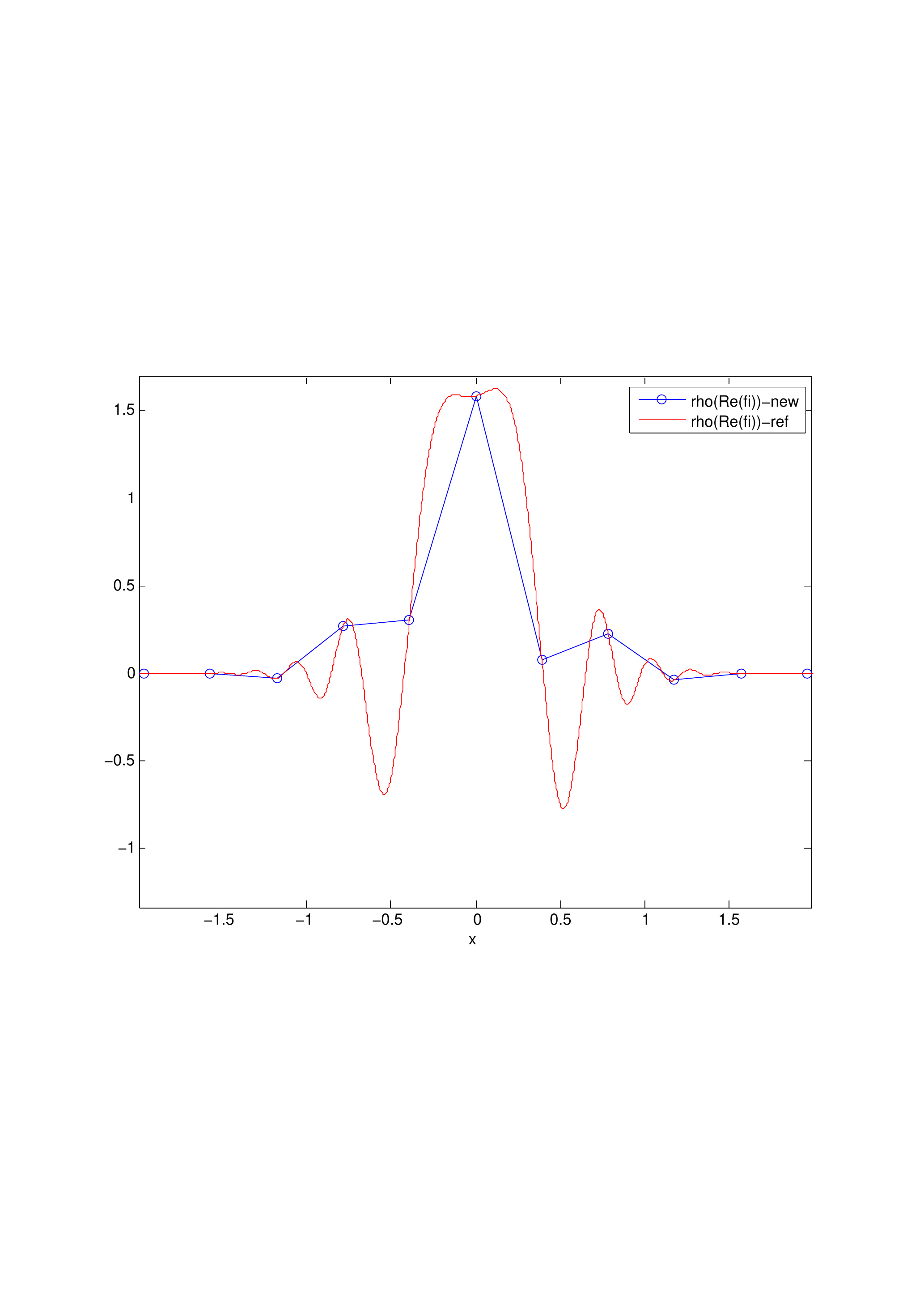}&
\hspace{-1.2cm}
\vspace{-2.3cm}
\includegraphics[width=0.4\linewidth]{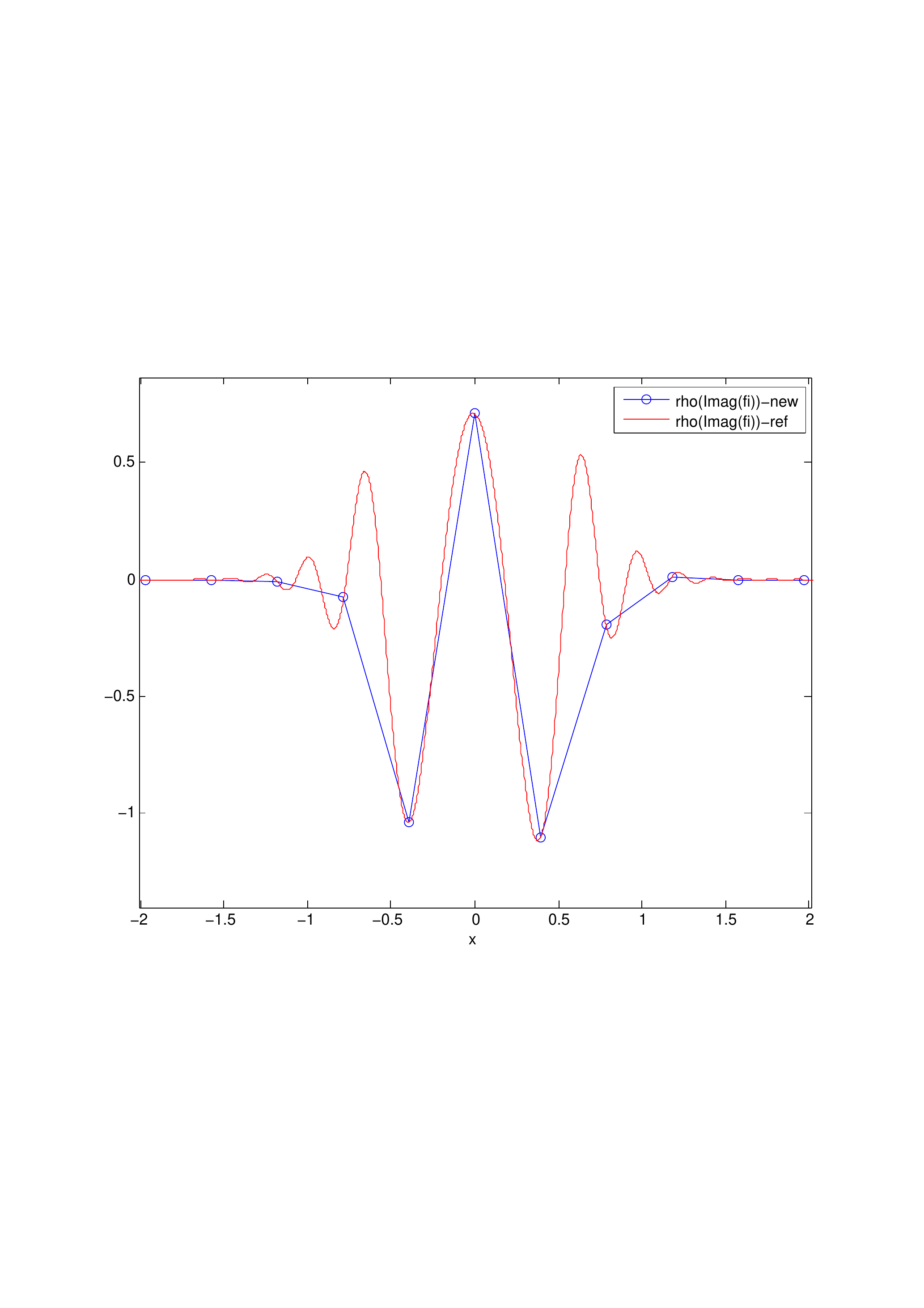}
\end{tabular}
\end{center}
\caption{$\varepsilon=1/256$. First line: space dependence of the densities $\rho^+$, Re($\rho^i$) and Im$(\rho^i)$.
Second line: space dependence of $\rho^-$, Re($\rho^i$) (zoom) and Im$(\rho^i)$ (zoom).  }
\label{fig2_eps1s256}
\end{figure}

\newpage

\section{Conclusion}
In this work, for a class of highly oscillatory hyperbolic systems of transport equations, we introduced a new numerical method which allows one to obtain accurate numerical solutions with mesh size and time step independent of the (possibly very small) wave length.  The central ideas include a geometric optics based
 ansatz, which builds the oscillatory phase into an independent variable, and a suitably chosen initial
 data derived from the Chapman-Enskog expansion. For a scalar model we prove that a first order
 approximation the  converges with a first order accuracy
 uniformly in the wave length, and the method is also extended for a system that arises in semiclassical modeling of surface hopping, which deals with quantum transition between different energy bands. Numerous numerical examples demonstrate that the method has
 the desired property of capturing the point-wise solutions of  highly oscillatory waves with mesh sizes much larger than the wave length. .

In the future, we will extend the method to higher dimensions, conduct more theoretical investigation on the
method for systems, and study other interesting non-adiabatic quantum dynamics problems.

\bibliographystyle{amsplain}
\bibliography{bc}

\end{document}